\newcommand{\mc}{\mathcal}
\newcommand{\eps}{\varepsilon}
\renewcommand{\d}{\,\mathrm{d}}
\DeclareMathOperator*{\argmin}{argmin}
\DeclareMathOperator{\sgn}{sgn}
\def\enne{\mathbb{N}}
\def\erre{\mathbb{R}}
\renewcommand{\to}{\rightarrow}
\numberwithin{equation}{section}
\newtheorem{thm}{Theorem}[section]
\newtheorem{defi}[thm]{Definition}
\newtheorem{prop}[thm]{Proposition}
\newtheorem{lemma}[thm]{Lemma}
\newtheorem{cor}[thm]{Corollary}
\theoremstyle{definition}
\newtheorem{rmk}[thm]{Remark}
\theoremstyle{remark}
\begin{document}
	\author[E. Bonetti, C. Cavaterra, F. Freddi and F. Riva]{Elena Bonetti, Cecilia Cavaterra, Francesco Freddi and Filippo Riva}
	
	\title[Damage in laminates with cohesive interface]{On a phase--field model of damage \\ for hybrid laminates with cohesive interface}
	
	\begin{abstract}
		In this paper we investigate a rate--independent model for hybrid laminates described by a damage phase--field approach on two layers
		coupled with a cohesive law governing the behaviour of their interface in a one--dimensional setup. For the analysis we adopt the notion of energetic evolution,
		based on global minimisation of the involved energy. Due to the presence of the cohesive zone, as already emerged in literature, compactness issues lead to the introduction of a fictitious variable replacing the physical one which represents the maximal opening of the
		interface displacement discontinuity reached during the evolution. A new strategy which allows to recover the equivalence between the fictitious
		and the real variable under general loading--unloading regimes is illustrated. The argument is based on temporal regularity of energetic evolutions.
		This regularity is achieved by means of a careful balance between the convexity of the elastic energy of the layers and the natural concavity of the
		cohesive energy of the interface.
	\end{abstract}
	
	\maketitle
	
	{\small
		\keywords{\noindent {\bf Keywords:} Damage phase--field model, Cohesive interface, Energetic evolutions, Temporal regularity.
		}
		\par
		\subjclass{\noindent {\bf 2020 MSC:}
			49J45,   
			49S05,   
			74A45,   
			74C15.   
		}
	}

	\pagenumbering{arabic}
	
	\medskip
	
	\tableofcontents

	\section*{Introduction}
	Composite fibre reinforced materials are increasingly finding applications in the manufacturing industry due to their capacity of offering high strength and stiffness
	with low mass density. Their only mechanical weakness is the brittleness. Indeed, rapid failure occurs without sufficient warning, due to the intrinsic nature of the
	adopted materials. A possible strategy to provide a ductile failure response is to consider novel composite architectures where fibres of different stiffness and ultimate
	strain values are combined through cohesive interfaces (hybridisation). In this case, complex rupture processes occur with diffuse crack pattern (fragmentation) and/or
	delamination. A deep analytical comprehension of the failure mechanisms of these kind of materials is thus needed in order to predict and control the appearance and the
	evolution of the cracks. \par
	Among the mathematical community, the variational approach to fracture, as formulated by \cite{BourFrancMar,FrancMar}, is one of the most adopted viewpoints to deal with
	crack problems. It is based on the Griffith's idea \cite{Griffith} that the crack growth is governed by a reciprocal competition between the internal elastic energy of the
	body and the energy spent to increase the crack length. In the original theory the energy associated with the fracture is proportional to the measure of the fracture itself,
	while in the cohesive case (Barenblatt \cite{Barenblatt}), where the process is more gradual, the energy depends on the opening of the crack.\par
	Due to the complexity of the phenomenon and the technical difficulties of the related mathematical analysis, especially from the numerical point of view, in the last twenty
	years a phase-field damage approach has been developed to overcome the aforementioned issues. Nowadays it is a well established and consolidated method to approximate both
	brittle (see \cite{AmbrTort1,AmbrTort2, Giacomini}) and cohesive fractures (see \cite{BonConIurl,Iurl}). It consists in the introduction of an irreversible
	internal variable taking values in $[0,1]$ and representing the damage state of the material. Usually, values $0$ and $1$ mean a completely sound and
	a completely broken state, respectively, while a value in between represents the case of a partial damage. The presence of a fracture is thus ideally replaced by those parts
	of the body whose damage variable has reached the value $1$.\par
	In this work a rigorous mathematical analysis is carried out for a one--dimensional model for hybrid laminates, which was previously introduced and numerically investigated in \cite{AleFredd1d}. Its
	description is given by coupling the damage phase--field approach, which models the elastic--brittle behaviour of the layers, with a cohesive law in the interface
	connecting the materials. The investigation is restricted to the case of incomplete damage in the sense that a reservoir of elastic material stiffness is always maintained, even if the damage variable reaches the maximum value $1$.
	This situation can be concretely justified by considering materials formed by different components from which only a part can undergo a damage (for instance in composite materials obtained with a matrix and a reinforcement) and delamination may take place; on the other side it can be seen as a mathematical approximation of the complete damage setting in which the material goes through full rupture. We refer for instance to \cite{BouchMielkRoub,MielkRoub} for an analysis of complete damage between two viscoelastic bodies, or to \cite{BonFreSeg} for a complete damage model in elastic materials, while we postpone the inspection of this model to future works, due to high mathematical difficulties related to the cohesive zone.\par 		
	Here, the model we want to analyse describes the evolution of a unidirectional hybrid laminate in hard device: a prescribed time--dependent displacement $\bar u(t)$ is applied
	on one side of the bar, whereas the other is fixed. We restrict our attention to slow prescribed displacements, so that inertial effects can be neglected and the analysis can
	be included in a quasi--static and rate--independent regime. For the sake of simplicity we consider a bar composed by only two layers with thickness $\rho_1$ and $\rho_2$,
	respectively, bonded together along the entire length by a cohesive interface. The thickness of the interface is very thin compared with $\rho_1$ and $\rho_2$, which in turn
	are way smaller than the length of the laminate $L>0$. Thus the model can be considered as one--dimensional.\par
	As already mentioned, the brittle behaviour of the two elastic layers is modelled by a phase--field damage approach. It suits with the rate--independent framework we are considering. For the reader
	interested instead in dynamic and rate--dependent damage models we refer for instance to \cite{BonSchim, Fremond}. The unknowns that govern the problem are thus the displacements
	of the two layers, denoted by $u_1$ and $u_2$, and their irreversible damage variables $\alpha_1$ and $\alpha_2$.\par
	Despite its apparent simplicity, the model has considerable application potential for the study and analysis of different failure phenomena in thin multilayered materials subjected to membranal mechanical regime such as composite materials. In \cite{AleFredd2d, AleFredd1d} it has been adopted to investigate the complex failure modes of hybrid laminates. The experimental evidences have been successfully replicated in 1D and 2D settings. The model, suitably extended to anisotropic materials and/or curvilinear geometries, can be an extremely powerful tool to analyse craquelure phenomena in artworks as preliminarily highlighted in \cite{Negri}.\par 
	In the quasi--static setting a huge variety of notions of solution can be considered, see for instance the monograph \cite{MielkRoubbook}. In this paper we focus our attention
	on the concept of energetic evolution, based on two ingredients: at every time the solution is a global minimiser of the involved total energy, and the sum of internal and
	dissipated energy balances the work done by the external prescribed displacement. The same kind of evolution in an analogous cohesive fracture model between two elastic bodies
	is studied in \cite{CagnToad, DMZan}; other notions based on stationary points of the energy, always in the framework of cohesive fractures, are instead analysed in
	\cite{NegSca, NegVit}.\par
	The choice of working with energetic evolutions is motivated by the future aim of analysing the complete damage situation, for which the main tool usually adopted (see \cite{BouchMielkRoub, MielkRoub}) is given by $\Gamma$-convergence \cite{DalMasoGamma}, notion which fits well with global minimisers.\par
	The total energy we consider is composed by a first part taking into account elastic responses of the layers and dissipation due to damage, and a second part reflecting
	the cohesive behaviour of the interface. The cohesive interface is governed by the slip between the two layers $\delta=|u_1-u_2|$ and its irreversible counterpart $\delta_h$
	which represents the maximal slip achieved during the evolution. The presence of an irreversible history variable can be also found in different models than cohesive
	fracture: we mention for instance the notion of fatigue, investigated in \cite{AleCrismOrl, CrisLazzOrl}.
	
	The expression of the energy in the model under consideration is hence given by:
	\begin{equation*}
		\sum_{i=1}^{2}\rho_i\Bigg(\underbrace{\frac {1}{2}\int_{0}^{L}\!\!\!\!E_i(\alpha_i(x))(u_i'(x))^2 \d x}_\text{elastic energy of the i-th layer}
		+\!\!\underbrace{\frac {1}{2}\int_{0}^{L} \!\!\!(\alpha'_i(x))^2 \d x}_{\substack{\text{internal energy of}\\ \text{the i-th damage variable}}}
		+\!\!\!\!\underbrace{\int_{0}^{L}\!\!\!\!w_i(\alpha_i(x))\d x}_{\substack{\text{energy dissipated by}\\\text{damage in the i-th layer}}}\!\!\!\!\Bigg)
		+\underbrace{\int_{0}^{L}\!\!\!\!\varphi(\delta(x),\delta_h(x))\d x}_{\substack{\text{ internal and dissipated}\\\text{energy in the interface}}},
	\end{equation*}
	where the symbol prime $'$ denotes the one--dimensional spatial derivative, $E_i\colon[0,1]\to (0,+\infty)$ is the elastic Young modulus of the $i$-th layer (which is strictly positive since we are in the incomplete damage framework),
	$w_i\colon [0,1]\to [0,+\infty)$ is a dissipation density and $\varphi\colon \{(y,z)\in\erre^2\mid z\ge y\ge 0\}\to [0,+\infty)$ is the loading-unloading density of
	the cohesive interface.\par
	As usual in the context of energetic evolutions, we follow a time--discretisation algorithm to show existence of solutions. More precisely, we consider a fine partition
	of the time interval $[0,T]$ and at each time step we select a global minimiser of the total energy; we then recover the time-continuous evolution by sending to zero the
	discretisation parameter. Due to compactness issues regarding the maximal slip $\delta_h$, the time--discretisation process leads to
	the introduction of a weaker notion of solution where a fictitious history variable $\gamma$ replaces the concrete one $\delta_h$. We point out that this auxiliary variable only appears when dealing with global minima of the energy, indeed it can be found in \cite{CagnToad,DMZan}, but not in \cite{NegSca, NegVit} where stationary points
	are considered. The issue has been partially overcome in \cite{CagnToad,DMZan} with different approaches, but assuming the hypothesis of constant unloading response, namely
	when the loading--unloading density $\varphi$ depends only on the second variable $z$.\par
	Here, an original strategy based on temporal regularity properties of energetic evolutions in order to recover the equivalence between the fictitious variable $\gamma$ and
	the proper one $\delta_h$ under reasonable assumptions on the density $\varphi$ is illustrated. In particular, we are able to cover all the general cases considered in
	\cite{NegSca}. Moreover, the proposed approach fits well with the model under consideration, but it can be also adapted to more general situations.\par
	An alternative strategy to deal with cohesive problems can be found in literature, where adhesion is treated with the introduction of a damage variable that
	macroscopically defines the bond state between two solids. Detachment corresponds to full damage state. The problem has been investigated theoretically in
	\cite{BonBonfRos1,BonBonfRos2,BonBonfRos3,BonBonfRos4} and numerically in \cite{FreFre,FreIur}
	\par
	The paper is organised as follows. In Section~\ref{secsetting} we introduce in a rigorous way the variational problem, presenting the global and history variables:
	the displacement field $u_i$, the damage variables $\alpha_i$, the slip $\delta$ and the history slip $\delta_h$. Subsequently, details of the involved energies are given
	together with a precise notion of energetic evolution and of its weak counterpart, here named generalised energetic evolution, including the fictitious variable $\gamma$.\par
	Section~\ref{secexistence} is devoted to the proof of existence of generalised energetic evolutions under very mild assumptions on the loading--unloading cohesive density
	$\varphi$. We first introduce the time--discretisation algorithm based on global minimisation of the energy, and we provide uniform bounds on the sequence of discrete
	minimisers. Thanks to these bounds and by means of a suitable version of Helly's selection theorem we are able to extract convergent subsequences as the time step vanishes.
	After the introduction of the fictitious history variable $\gamma$ and by exploiting the fact that the discrete functions selected by the algorithm are global minima of the
	total energy, we finally deduce that the previously obtained limit functions actually are a generalised energetic evolution.\par
	In Section~\ref{seceulerlagrange} attention is focused on the equations that a generalised energetic evolution must satisfy; they are a byproduct of the global minimality
	condition together with the energy balance. It turns out that the displacements fulfil a system of equations in divergence form, see \eqref{systemstresses}, while the damage variables
	satisfy a Karush--Kuhn--Tucker condition, see \eqref{KKT}, assuming a priori certain regularity in time. Of course these equations have to be meant in a weak sense.
	The results of this third section are a first step in order to obtain the equivalence between $\gamma$ and the concrete history variable $\delta_h$.\par
	Finally Section~\ref{sectemporalregularity} illustrates the main result of the paper. We first adapt a convexity argument introduced in \cite{Thom} to our setting in which
	a cohesive energy (concave by nature) is present, in order to gain regularity in time (absolute continuity) of generalised energetic evolutions. Once this temporal regularity
	is achieved, we exploit the Euler--Lagrange equations of Section~\ref{seceulerlagrange} together with the monotonicity (in time) of $\gamma$ and $\delta_h$ to deduce their
	equivalence under reasonable assumptions on $\varphi$. We thus obtain as a byproduct that the generalised energetic evolution found in Section~\ref{secexistence} is actually
	an energetic evolution, since $\gamma$ coincides with $\delta_h$.\par
	At the end of the work we attach an Appendix in which we gather some definitions and properties  we need throughout the paper about absolutely continuous and
	bounded variation functions with values in Banach spaces.
	
	\section{Setting of the Problem}\label{secsetting}	In this section we present the variational formulation of the one-dimensional continuum model described in the Introduction
	of two layers bonded together by a cohesive interface in a hard device setup.  We list all the main assumptions we need throughout the paper. We also introduce the
	two notions of energetic evolution and generalised energetic evolution in our context, see Definitions~\ref{Enersol} and \ref{Generalisedenersol}.\par
	For the sake of clarity, in this work every function in the Sobolev space $H^1(a,b)$ is always identified with its continuous representative. The prime symbol $'$ is used to denote spatial derivatives, while the dot symbol $\,\dot{}\,$ to denote time derivatives. In the case of a function $f\colon [0,T]\to H^1(a,b)$, which thus depends on both time and space, we write $f(t)'$ to denote the (weak) spatial derivative of $f(t)\in H^1(a,b)$ and with a little abuse of notation we write $f'(t,x)$ to denote its value at a.e. $x\in [a,b]$. If $f$ is sufficiently regular in time, for instance in $C^1([0,T];H^1(a,b))$, for the time derivative we instead adopt the scripts $\dot f$, $\dot{f}(t)$ and $\dot{f}(t,x)$, with the obvious meanings: $\dot f$ is the function from $[0,T]$ to $ H^1(a,b)$, $\dot{f}(t)$ is its value as a function in $H^1(a,b)$, once $t\in [0,T]$ is fixed, and $\dot{f}(t,x)$ is its value (as a real number) at $x\in [a,b]$.  By $a\vee b$ and
	$a\wedge b$ we finally mean the maximum and the minimum between two extended real numbers $a$ and $b$ in $[-\infty,+\infty]$.\par
	We fix a time $T>0$ and the length of the laminate $L>0$. We also normalise the thickness of the two layers $\rho_1$ and $\rho_2$ to $1$, since this does not affect the results.
	\subsection{The variables}To describe the evolution of the system, for $i=1,2$ we introduce the function $u_i\colon[0,T]\times[0,L]\to \erre$, where $u_i(t,x)$ denotes the
	displacement at time $t$ of the point $x$ of the $i$-th layer; here $\bm{u}(t,x)$ represents the vector in $\erre^2$ with components $u_1(t,x)$ and $u_2(t,x)$. For
	the structure of the model itself, at every time $t\in [0,T]$ the displacement $u_i(t)$ will belong to the space $H^1(0,L)$.
	The function $\delta\colon [0,T]\times[0,L]\to [0,+\infty)$ defined as
	\begin{subequations}
		\begin{equation}
			\delta(t,x)=\delta[\bm{u}](t,x):=|u_1(t,x)-u_2(t,x)|,
		\end{equation}
		instead denotes the displacement slip on the interface between the two layers. Then, we  introduce the non-decreasing function
		$\delta_h\colon [0,T]\times[0,L]\to [0,+\infty)$ as
		\begin{equation}\label{historicalslip}
			\delta_h(t,x):=\sup\limits_{\tau\in[0,t]}\delta(\tau,x),
		\end{equation}
	\end{subequations}
	namely the history variable which records the maximal slip reached at the point $x$ in the interface till the time $t$. Internal constraints, such as unilateral conditions
	(see \cite{BonBonfRos2,BonBonfRos3}), are not necessary on the kinematics as this only permits displacement slips between the two solids and interpenetration is prevented a-priori.
	
	Finally, for $i=1,2$, we consider the function $\alpha_i\colon [0,T]\times[0,L]\to [0,1]$, where $\alpha_i(t,x)$ represents the amount of damage at time $t$ of the point $x$ of the
	$i$-th layer. It is non-decreasing in time with values in $[0,1]$. The value $0$ means completely sound material whereas the value $1$ represents fully damaged state. We however point out that we confine ourselves to the incomplete damage setting, namely the fully damaged state does not describe the rupture of the layer, whose stiffness indeed never vanishes; this will be clear in \eqref{Ecoerc}, in which we assume a strictly positive elastic modulus for both layers. As for the displacement, the damage
	variable $\alpha_i(t)$ will be in $H^1(0,L)$ for every $t\in [0,T]$. In analogy with the previous setting, $\bm{\alpha}(t,x)$ denotes the vector in $\erre^2$ with components
	$\alpha_1(t,x)$ and $\alpha_2(t,x)$.\par

	\subsection{The energies} We now present the energies involved in our model. Given a pair $(\bm{u},\bm{\alpha})$ belonging to $[H^1(0,L)]^2\times[H^1(0,L)]^2$ and representing
	an admissible displacement and damage, the stored elastic energy of the two layers is given by:
	\begin{equation}
		\mathcal{E}[\bm{u},\bm{\alpha}]:=\sum_{i=1}^{2}\frac {1}{2}\int_{0}^{L}E_i(\alpha_i(x))(u_i'(x))^2 \d x,
	\end{equation}
	where, for $i=1,2$, we assume the elastic Young moduli $E_i$ satisfy:
	\begin{equation}\label{Ecoerc}
		E_i\in C^0([0,1])\text{ such that }E_i(y)\geq \min_{\tilde y \in [0,1]} E_i(\tilde y)=: \eps_i >0, \, \text{ for every }y\in[0,1].
	\end{equation}
	We define
	\begin{equation}\label{epsilon}
		\eps:=\eps_1\wedge\eps_2>0,
	\end{equation}
	which is strictly positive by \eqref{Ecoerc}. This feature reflects the fact that we are considering the incomplete damage framework, and it will be used to gain coercivity of
	$\mc E$. This property of the energy is indeed missing in the complete damage setting where the functions $E_i$ can vanish, and a completely different notion of solution and
	strategy must be adopted. We refer to \cite{BouchMielkRoub, MielkRoub} or to \cite{BonFreSeg} for the interested reader. \par
	We can now introduce for $i=1,2$ the stress $\sigma_i\colon [0,T]\times[0,L]\to \erre$, defined as
	\begin{equation}\label{stresses}
		\sigma_i(t,x)=\sigma_i[u_i,\alpha_i](t,x):= E_i(\alpha_i(t,x))u_i'(t,x).
	\end{equation}
	As before, by $\bm{\sigma}(t,x)$ we mean the vector with components $\sigma_1(t,x)$ and $\sigma_2(t,x)$.\par
	Another energy term appearing in the model is the sum of the stored and the dissipated energy of the phase--field variable $\bm{\alpha}\in [H^1(0,L)]^2$ during the damaging
	process and expressed by
	\begin{equation}	\mathcal{D}[\bm{\alpha}]:=\sum_{i=1}^{2}\left(\frac {1}{2}\int_{0}^{L} (\alpha'_i(x))^2 \d x+\int_{0}^{L}w_i(\alpha_i(x))\d x\right).
	\end{equation}
	In literature there are very different choices of dissipation
	functions $w_i$ (see for instance \cite{AleFredd2d,AleFredd1d,Kuhn,Nejar,PhamMarigo,Wu}). As elementary examples we can consider $w_i(y)=\frac{y^2+y}{2}$ or $w_i(y)=y$.
	
	In this work we permit quite general assumptions on $w_i$ as follows:
	\begin{equation}\label{wi}
		w_i\in C^0([0,1])\text{ such that }w_i(y)\ge 0\text{ for every }y\in[0,1].
	\end{equation}
	\begin{rmk}
		The dissipated damage density, usually a process dependent function (i.e. depending on the time derivative of the damage variable $\dot{\alpha}(t)$), is here treated as a
		state function due to the underlying gradient damage model. See also \cite{AleFredd2d}, \cite{AleFredd1d} and \cite{MielkRoub}.
	\end{rmk}
	We finally introduce the cohesive energy in the interface between the two layers:
	\begin{equation}\label{Cohesiveenergy}
		\mathcal{K}[\delta,\gamma]:=\int_{0}^{L}\varphi(\delta(x),\gamma(x))\d x,
	\end{equation}
	where $\delta$ and $\gamma$ are two non-negative functions in $[0,L]$ such that $\gamma\ge\delta$ and representing, respectively, the slip and the history slip of the
	displacement at a given instant. The non-negative function
	\begin{equation}
		\varphi:\mc T\to [0,+\infty),\quad\text{where}\quad\mc T=\{(y,z)\in \erre^2\mid z\ge y\ge 0\},
	\end{equation}
	is the loading-unloading density of the cohesive interface; the variable $y$ governs the unloading regime (usually convex), while $z$ the loading regime
	(usually concave).\par
	Since several assumptions on $\varphi$ will be needed throughout the paper we prefer listing them here. The first set of assumptions, very mild, will be used
	in Section~\ref{secexistence} to prove existence of (generalised) energetic evolutions (see Definitions~\ref{Enersol} and \ref{Generalisedenersol}):
	\begin{itemize}
		\item[($\varphi$1)] $\varphi$ is lower semicontinuous;
		\item[($\varphi$2)] $\varphi(0,\cdot)$ is bounded in $[0,+\infty)$;
		\item[($\varphi$3)] $\varphi(y,\cdot)$ is continuous and non-decreasing in $[y,+\infty)$, for every $y\ge 0$.
	\end{itemize}
	We also present here the specific -- but often not suited for physical applications -- assumption which has been used in \cite{CagnToad} and \cite{DMZan} to deal with the fictitious variable $\gamma$
	(see Definition~\ref{Generalisedenersol}), and which in this work we are able to avoid. We however include it in the list because we make use of it
	in Theorem~\ref{exensol}, where we employ the argument of \cite{CagnToad} in our context:
	\begin{itemize}
		\item[($\varphi$4)] there exist two functions  $\varphi_1,\varphi_2\colon [0,+\infty)\to[0,+\infty)$ such that $\varphi_1$ is lower semicontinuous,
		$\varphi_2$ is bounded, non-decreasing and concave, and  $\varphi(y,z)=\varphi_1(y)+\varphi_2(z)$.
	\end{itemize}
	We notice that ($\varphi$4) implies ($\varphi$1), ($\varphi$2) and ($\varphi$3).
	\begin{rmk}
		Actually, in \cite{CagnToad, DMZan} the function $\varphi_1$ appearing above is chosen identically $0$, so that the cohesive density $\varphi$ depends only on the
		second variable $z$ (constant unloading regime). However, their argument can be easily adapted to our case.
	\end{rmk}
	To overcome the necessity of ($\varphi$4) in recovering the equality $\gamma=\delta_h$ (see and compare Definitions~\ref{Enersol} and \ref{Generalisedenersol}), in Sections~\ref{seceulerlagrange} and \ref{sectemporalregularity} we develop an alternative and new argument based on time regularity of solutions (we however point out that condition ($\varphi$4) is completely unrelated with temporal regularity and in general does not imply it).
	The assumptions we need to perform the whole strategy are listed just below. For the sake of brevity, given a non-negative function $\varphi$ with domain
	$\mc T$, for $z\in[0,+\infty)$ we define $$\psi(z):=\varphi(z,z),$$ namely the restriction of $\varphi$ on the diagonal. The function $\psi$ governs the loading regime. Moreover we introduce the constant
	\begin{equation}\label{bardelta}
		\bar\delta:=\inf\{z>0\mid \psi \text{ is constant in }[z,+\infty)\},
	\end{equation}
	with the convention $\inf \{\emptyset\}=+\infty$; it represents the limit slip which triggers complete delamination. Indeed, according to \cite{AleFredd2d,AleFredd1d}, complete delamination may occour for finite or infinite slip value (see Remark \ref{rmkexamplephi}).
	
	We then set
	\begin{equation*}
		\mc T_{\bar\delta}:=\{(y,z)\in\mc T\mid z< \bar\delta\}.
	\end{equation*}
	We thus require:
	\begin{itemize}
		\item[($\varphi$5)] the function $\psi\in C^1([0,+\infty))$ is $\lambda$--convex for some $\lambda>0$, namely for every $\theta\in [0,1]$ and $z^a,z^b\in [0,+\infty)$ it holds
		\begin{equation*}
			\psi(\theta z^a+(1-\theta)z^b)\le \theta\psi (z^a)+(1-\theta)\psi (z^b)+\frac \lambda 2\theta(1-\theta)|z^a-z^b|^2;
		\end{equation*}
		\item[($\varphi$6)] for every $z\in(0,+\infty)$ the map $\varphi(\cdot,z)\in C^1([0,z])$ is non-decreasing and convex;
		\item[($\varphi$7)] for every $z\in(0,+\infty)$ there holds $\partial_y\varphi(z,z)=\psi'(z)$ and $\partial_y\varphi(0,z)=0$;
		\item[($\varphi$8)] the partial derivative $\partial_y\varphi$ belongs to $C^0(\mc T\setminus(0,0))$ and it is bounded in $\mc T$.
		\item[($\varphi$9)] for every $y\in [0,\bar{\delta})$ the map $\varphi(y,\cdot)$ is differentiable in $[y,\bar\delta)$ and the partial derivative $\partial_z\varphi$ is 
		continuous  and strictly positive on $\mc T_{\bar\delta}\setminus\{(z,z)\in\erre^2\mid z\ge 0\}$.
	\end{itemize}
	Condition ($\varphi$9) will be actually weakened in Section~\ref{sectemporalregularity}, where only a uniform strict monotonicity with respect to $z$ will be needed,
	see \eqref{bilipass}.\par
	We want to point out that this set of assumptions includes a huge variety of mechanically meaningful loading--unloading densities $\varphi$, as precised in the next remark.
	We also notice that these conditions are similar to the one considered in \cite{NegSca}.
	
	\begin{rmk}[\textbf{Main Example}]\label{rmkexamplephi}
		The prototypical example of a physically meaningful loading-unloading density is obtained reasoning in the opposite way of what we presented before, namely firstly a function
		$\psi$ is given and then the density $\varphi$ is built from $\psi$. As regards $\psi$, which governs the loading regime, natural assumptions arising from applications are the following:
		$\psi\in C^2([0,\bar\delta))\cap C^1([0,+\infty))$ is a non-decreasing, concave and bounded function such that $\psi(0)=0$, $\psi'>0$ and $\psi''$ is bounded from below 
		in $[0,\bar\delta)$. In particular ($\varphi$5) is satisfied with $\lambda=\sup\limits_{z\in[0,\bar\delta)}|\psi''(z)|$. For instance one can consider:
		\begin{equation*}
			\psi(z)=\begin{cases}
				cz(2k-z),&\text{ if }z\in[0,k),\\
				ck^2,&\text{ if }z\in[k,+\infty),
			\end{cases}\quad\quad\quad\text{or}\quad\quad \psi(z)=c(1-e^{-kz}),\quad\text{ for }c,k>0.
		\end{equation*}
		
		\begin{figure}[ht]
			\includegraphics[scale=0.9]{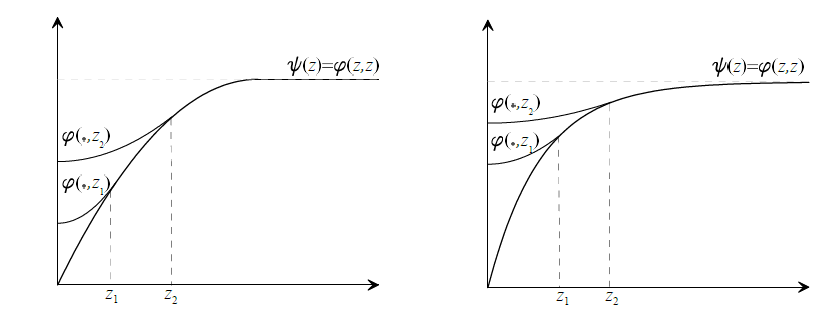}
			\caption{The loading-unloading density $\varphi$ of example \eqref{examplephi} in the cases $\bar{\delta}<+\infty$ (left) and $\bar{\delta}=+\infty$ (right).}\label{Figure}
		\end{figure}
		
		In the first example $\bar\delta=k<+\infty$, while in the second one $\bar\delta=+\infty$.\par
		The function $\varphi$ is then defined by considering a quadratic unloading regime:
		\begin{equation}\label{examplephi}
			\varphi(y,z):=\begin{cases}
				\frac 12 \frac{\psi'(z)}{z}y^2+\psi(z)-\frac 12 z\psi'(z),&\text{ if }(y,z)\in \mc T\setminus(0,0),\\
				0,&\text{ if }(y,z)=(0,0).
			\end{cases}
		\end{equation}
		We refer to Figure~\ref{Figure} for the graphs of $\varphi$. By construction $\varphi$ is continuous on $\mc T$ and ($\varphi$6), ($\varphi$7) and ($\varphi$8) are satisfied. To verify also ($\varphi$9) we notice that it holds:
		\begin{equation*}
			\partial_z\varphi(y,z)=\frac{\psi'(z)-z\psi''(z)}{2}\left(1-\frac{y^2}{z^2}\right),\quad\text{ for every }(y,z)\in \mc T_{\bar\delta}.
		\end{equation*}
		Thus we deduce $\partial_z\varphi$ is continuous in $\mc T_{\bar\delta}\setminus(0,0)$; if moreover $z>y$, since $\psi'(z)$ is strictly positive in $[0,\bar\delta)$, we get
		that $\partial_z\varphi(y,z)>0$ and so ($\varphi$9) is fulfilled.\par
		We finally observe that by the boundedness of $\psi$ we also obtain ($\varphi$2).
	\end{rmk}
	We now present a very simple lemma regarding the behaviour of $\varphi$ in the case $\bar\delta<+\infty$.
	\begin{lemma}
		Assume $\varphi$ satisfies ($\varphi$5), ($\varphi$6) and ($\varphi$7) and assume $\bar\delta$ is finite. Then $\varphi$ is constant in $\mc T\setminus \mc T_{\bar\delta}$,
		and in particular:
		\begin{equation*}
			\varphi(y,z)=\psi(\bar\delta),\text{ for every }(y,z)\in \mc T\setminus\mc T_{\bar\delta}.
		\end{equation*}
	\end{lemma}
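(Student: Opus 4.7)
The plan is to first pin down the behaviour of $\psi$ on $[\bar\delta,+\infty)$ and then transfer this to $\varphi$ via the convexity in the first variable.

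First I would observe that by definition of $\bar\delta$, there exists a sequence $z_n\downarrow\bar\delta$ such that $\psi$ is constant on $[z_n,+\infty)$; since $\psi\in C^1([0,+\infty))$ by ($\varphi$5), passing to the limit and using continuity gives $\psi(z)=\psi(\bar\delta)$ for every $z\ge\bar\delta$. As a consequence $\psi'(z)=0$ on $(\bar\delta,+\infty)$, and by the continuity of $\psi'$ one also gets $\psi'(\bar\delta)=0$.

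Next, I would fix $z\ge\bar\delta$ and look at the slice $y\mapsto\varphi(y,z)$ on $[0,z]$. Assumption ($\varphi$6) says this map is in $C^1([0,z])$ and convex, so $\partial_y\varphi(\cdot,z)$ is non-decreasing on $[0,z]$. By ($\varphi$7) its endpoint values are $\partial_y\varphi(0,z)=0$ and $\partial_y\varphi(z,z)=\psi'(z)$, which from the previous step equals $0$. A non-decreasing function vanishing at both endpoints is identically zero, hence $\varphi(\cdot,z)$ is constant on $[0,z]$. The constant value is $\varphi(z,z)=\psi(z)=\psi(\bar\delta)$, which is exactly the claim for every $(y,z)\in\mc T$ with $z\ge\bar\delta$, i.e.\ for every $(y,z)\in\mc T\setminus\mc T_{\bar\delta}$.

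I do not foresee a genuine obstacle: the argument is a one-step application of convexity plus the boundary identities in ($\varphi$7). The only mildly delicate point is the boundary case $z=\bar\delta$, where one needs $\psi'(\bar\delta)=0$; this is handled for free by the $C^1$ regularity in ($\varphi$5), so no separate treatment is necessary.
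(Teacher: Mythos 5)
Your proof is correct and follows essentially the same route as the paper: deduce $\psi'=0$ on $[\bar\delta,+\infty)$ from the definition of $\bar\delta$ and the $C^1$ regularity, then use ($\varphi$7) and the convexity in ($\varphi$6) to force $\partial_y\varphi(\cdot,z)\equiv 0$ on $[0,z]$ for $z\ge\bar\delta$. The only cosmetic difference is that you pin the derivative down using the endpoint condition $\partial_y\varphi(0,z)=0$, while the paper uses the monotonicity of $\varphi(\cdot,z)$ (i.e.\ $\partial_y\varphi\ge 0$) together with $\partial_y\varphi(z,z)=0$; both are immediate consequences of the same hypotheses.
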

	\begin{proof}
		Since $\psi$ is $C^1[0,+\infty)$, then by definition of $\bar\delta$ it holds $\psi'(z)=0$ for every $z\in [\bar\delta,+\infty)$. We now fix $z\in [\bar\delta,+\infty)$;
		by ($\varphi$7) we deduce that $\partial_y\varphi(z,z)=0$. Condition ($\varphi$6) thus yields $\partial_y\varphi(y,z)=0$ for every $y\in [0,z]$, and hence we conclude.
	\end{proof}
	We finally introduce the function $\varphi_{\bar\delta}$, defined as:
	\begin{equation*}
		\varphi_{\bar\delta}(y,z):=\varphi(y\wedge\bar\delta,z\wedge\bar\delta),\quad \text{ for every }(y,z)\in\mc T.
	\end{equation*}
	Thanks to previous lemma, it is easy to deduce that if conditions ($\varphi$5), ($\varphi$6) and ($\varphi$7) are fulfilled, then actually $\varphi$ and $\varphi_{\bar\delta}$
	coincide, namely it holds:
	\begin{equation}\label{equalitiphidelta}
		\varphi(y,z)=\varphi_{\bar\delta}(y,z),\quad \text{ for every }(y,z)\in\mc T.
	\end{equation}
	This last equality will be widely exploited in Section~\ref{sectemporalregularity}.
	\subsection{Energetic evolutions}
	We are now in a position to introduce the notion of solution we want to investigate in this work. Before presenting it we need to consider the prescribed displacement 
	acting on the boundary $x=L$ of the laminate, namely a function $\bar{u}\in AC([0,T])$; we also need to consider initial data for the displacements and damage variables, namely functions $u_i^0,\alpha_i^0$ 
	which must satisfy, for $i=1,2$, the following regularity and compatibility conditions:
	\begin{subequations}\label{compatibility}
		\begin{equation}
			u_i^0,\alpha_i^0\in H^1(0,L),
		\end{equation}
		\begin{equation}
			u_1^0(0)=u_2^0(0)=0,\quad u_1^0(L)=u_2^0(L)=\bar{u}(0),
		\end{equation}
		\begin{equation}
			0\le \alpha_i^0(x)\le 1,\quad \text{ for every }x\in [0,L].
		\end{equation}
	\end{subequations}
	Once the initial displacements are given, we define the initial slip $$\delta^0:=|u_1^0-u_2^0|.$$\par 	
	For $t\in [0,T]$, we denote by $H^1_{0,\bar{u}(t)}(0,L)$ the set of functions $v\in H^1(0,L)$ attaining the boundary values $v(0)=0$ and $v(L)=\bar{u}(t)$.
	We instead denote by $H^1_{[0,1]}(0,L)$ the set of functions $v\in H^1(0,L)$ such that $0\le v(x)\le 1$ for every $x\in[0,L]$.
	\begin{defi}\label{Enersol}
		Given a prescribed displacement $\bar{u}\in AC([0,T])$ and initial data $\bm{u}^0$, $\bm{\alpha}^0$ satisfying \eqref{compatibility}, we say that a bounded pair
		$(\bm{u},\bm{\alpha})\colon [0,T]\times [0,L]\to \erre^2\times\erre^2$ is an \textbf{energetic evolution} if:
		\begin{itemize}
			\item[(CO)] $\bm{u}(t)\in [H^1_{0,\bar{u}(t)}(0,L)]^2$, $\bm{\alpha}(t)\in [H^1_{[0,1]}(0,L)]^2$, for every $t\in[0,T]$;
			\item[(ID)] $\bm{u}(0)=\bm{u}^0$, $\bm{\alpha}(0)=\bm{\alpha}^0$;
			\item[(IR)] for $i=1,2$ the damage function $\alpha_i$ is non-decreasing in time, namely,
			\begin{equation*}
				\text{ for every }0\le s\le t\le T \text{ it holds: }\alpha_i(s,x)\le\alpha_i(t,x), \, \text{ for every }x\in[0,L];
			\end{equation*}
			\item[(GS)] for every $t\in[0,T]$, for every ${\widetilde{\bm u}}\in [H^1_{0,\bar{u}(t)}(0,L)]^2$ and for every ${\widetilde{\bm\alpha}}\in [H^1(0,L)]^2$ such that
			${\alpha}_i(t)\le{\widetilde{\alpha}_i}\le {1}$ in $[0,L]$, $i=1,2$, one has:
			$$\mathcal{E}[\bm{u}(t),\bm{\alpha}(t)]+\mathcal D[\bm{\alpha}(t)]+\mathcal{K}[\delta(t),\delta_h(t)]
			\le\mathcal{E}[{\widetilde{\bm u}},{\widetilde{\bm\alpha}}]+\mathcal D[{\widetilde{\bm\alpha}}]+\mathcal{K}[\widetilde\delta,\delta_h(t)\vee\widetilde\delta];
			$$
			here we mean $\widetilde{\delta}=|\widetilde{u}_1-\widetilde{u}_2|$;
			\item[(EB)] the function $\displaystyle\tau\mapsto \frac{\dot{\bar{u}}(\tau)}{L}\int_{0}^{L}\sum_{i=1}^{2} \sigma_i(\tau,x)\d x$ belongs to $L^1(0,T)$ and for every $t\in[0,T]$ it holds:
			$$\mathcal{E}[\bm{u}(t),\bm{\alpha}(t)]+\mathcal D[\bm{\alpha}(t)]+\mathcal{K}[\delta(t),\delta_h(t)]
			=\mathcal{E}[\bm{u}^0,\bm{\alpha}^0]+\mathcal D[\bm{\alpha}^0]+\mathcal{K}[\delta^0,\delta^0]+\mathcal{W}[\bm{u},\bm{\alpha}](t),$$
			\noindent	
			where 
			\begin{equation}\label{work}
				\mathcal{W}[\bm{u},\bm{\alpha}](t):=\int_{0}^{t}\frac{\dot{\bar{u}}(\tau)}{L}\int_{0}^{L}\sum_{i=1}^{2} \sigma_i(\tau,x)\d x\,d\tau,
			\end{equation}
			is the work done by the external prescribed displacement.
		\end{itemize}
	\end{defi}
	
	In the above Definition (CO) stands for compatibility, (ID) for initial data and (IR) for irreversibility (of the damage variables); the main conditions which characterise this
	sort of solution are of course the global stability (GS) and the energy balance (EB).\par
	We notice that, by (GS), a necessary condition for the existence of such an evolution is the global minimality of the initial data at time $t=0$, namely:
	\begin{equation}\label{GS0}
		\mathcal{E}[\bm{u}^0,\bm{\alpha}^0]+\mathcal D[\bm{\alpha}^0]+\mathcal{K}[\delta^0,\delta^0]
		\le\mathcal{E}[{\widetilde{\bm u}},{\widetilde{\bm\alpha}}]+\mathcal D[{\widetilde{\bm\alpha}}]+\mathcal{K}[\widetilde\delta,\delta^0\vee\widetilde\delta],
	\end{equation}
	for every ${\widetilde{\bm u}}\in [H^1_{0,\bar{u}(0)}(0,L)]^2$ and for every ${\widetilde{\bm\alpha}}\in [H^1(0,L)]^2$ such that ${\alpha}_i^0\le{\widetilde{\alpha}_i}\le {1}$
	in $[0,L]$, $i=1,2$.\par
	We also observe that the definition yields some very weak time regularity on the solution, namely $\bm{u}$ and $\bm{\alpha}$ are bounded in time with values in $[H^1(0,L)]^2$,
	as stated in the next proposition. As a byproduct we also obtain both temporal and spatial regularity on the history variable $\delta_h$, which actually is bounded in time with
	values in  $C^{1/2}_0([0,L])$, namely the space of H\"older-continuous functions with exponent $1/2$ vanishing at $x=0$ and $x=L$. 
	\begin{prop}\label{unifboundsol}
		Assume $E_i$ satisfies \eqref{Ecoerc}, $w_i$ satisfies \eqref{wi}, $\varphi$ satisfies ($\varphi$2) and let $(\bm{u},\bm{\alpha})$ be an energetic evolution.
		Then there exists a positive constant $C$ such that:
		\begin{subequations}
			\begin{equation}\label{bounds1sol}
				\sup\limits_{t\in[0,T]}\Vert\bm{u}(t)\Vert_{[H^1(0,L)]^2}\le \frac{C}{\sqrt{\eps}},\quad\text{ and }\quad\sup\limits_{t\in[0,T]}\Vert\bm{\alpha}(t)\Vert_{[H^1(0,L)]^2}\le C,
			\end{equation}
			where $\eps>0$ has been introduced in \eqref{epsilon}. In particular, $\delta_h(t)$ belongs to $C_0^{1/2}([0,L])$ for every $t\in[0,T]$, and the following estimate holds true:
			\begin{equation}\label{esthold}
				\sup\limits_{t\in[0,T]}|\delta_h(t,x)-\delta_h(t,y)|\le \frac{C}{\sqrt{\eps}}\sqrt{|x-y|},\quad\text{ for every }x,y\in[0,L].
			\end{equation}
		\end{subequations}
	\end{prop}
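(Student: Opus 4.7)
\smallskip
\textbf{Proof plan.} The plan is to test the global stability condition (GS) against a carefully chosen competitor in order to bound the total energy, and then to use the individual nonnegativity of the three energy contributions to isolate bounds for $\bm u(t)$ and $\bm\alpha(t)$.

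As a competitor at time $t$, I choose the affine displacements $\widetilde u_i(x):=\bar u(t)x/L$, which belong to $H^1_{0,\bar u(t)}(0,L)$, and the fully damaged state $\widetilde\alpha_i\equiv 1$, which is admissible in (GS) because $\alpha_i(t)\le 1$ pointwise by (IR) and (CO). With this choice $\widetilde\delta\equiv 0$, so the cohesive term on the right-hand side of (GS) reduces to $\int_0^L\varphi(0,\delta_h(t,x))\d x$, which is bounded by $L\,\|\varphi(0,\cdot)\|_\infty$ thanks to ($\varphi$2). The elastic term contributes $\tfrac12\sum_i E_i(1)\bar u(t)^2/L$, which is uniformly bounded because $E_i$ is continuous on $[0,1]$ and $\bar u\in AC([0,T])$ is bounded; the dissipation term contributes $L\sum_i w_i(1)$, again finite by \eqref{wi}. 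Hence (GS) yields a constant $C>0$, independent of $t$, such that
\[
\mathcal{E}[\bm u(t),\bm\alpha(t)]+\mathcal D[\bm\alpha(t)]+\mathcal K[\delta(t),\delta_h(t)]\le C.
\]

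Since each of the three summands on the left is nonnegative, each is individually bounded by $C$. The coercivity assumption \eqref{Ecoerc} then gives $\sum_i\int_0^L(u_i'(t))^2\d x\le 2C/\eps$. Together with the boundary condition $u_i(t,0)=0$ and the fundamental theorem of calculus (which yields the pointwise estimate $|u_i(t,x)|\le\sqrt{L}\,\|u_i'(t)\|_{L^2}$, hence $\|u_i(t)\|_{L^2}\le L\|u_i'(t)\|_{L^2}$), one obtains the first bound in \eqref{bounds1sol}. Similarly, from $\mathcal D[\bm\alpha(t)]\le C$ and the pointwise constraint $0\le\alpha_i(t,x)\le 1$ one concludes the second bound in \eqref{bounds1sol}.

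For the H\"older estimate on $\delta_h$ I use the standard embedding $H^1(0,L)\hookrightarrow C^{1/2}([0,L])$: by Cauchy--Schwarz and the bound just obtained,
\[
|u_i(t,x)-u_i(t,y)|\le \|u_i'(t)\|_{L^2}\sqrt{|x-y|}\le \frac{C'}{\sqrt\eps}\sqrt{|x-y|},
\]
uniformly in $t$. Combining the two layers gives the same $C^{1/2}$ estimate for $\delta(t,\cdot)=|u_1(t)-u_2(t)|$, uniformly in $t\in[0,T]$. The definition \eqref{historicalslip} and the elementary inequality $|\sup_\tau f(\tau,x)-\sup_\tau f(\tau,y)|\le\sup_\tau|f(\tau,x)-f(\tau,y)|$ transfer this modulus of continuity to $\delta_h(t,\cdot)$, yielding \eqref{esthold}. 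The boundary values $\delta_h(t,0)=\delta_h(t,L)=0$ follow because (CO) forces $u_1(\tau,0)=u_2(\tau,0)=0$ and $u_1(\tau,L)=u_2(\tau,L)=\bar u(\tau)$ for every $\tau$, so $\delta(\tau,\cdot)$ vanishes at the endpoints and hence so does its supremum.

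There is no serious obstacle here: the argument is a standard a priori estimate exploiting the incomplete damage coercivity \eqref{Ecoerc} and hypothesis ($\varphi$2). The only mildly delicate point is the admissibility of the competitor $(\widetilde{\bm u},\widetilde{\bm\alpha})$ and, above all, recognising that the asymmetric form of the cohesive term in (GS) (which compares $\widetilde\delta$ against $\delta_h(t)\vee\widetilde\delta$) collapses to the harmless quantity $\int_0^L\varphi(0,\delta_h(t,x))\d x$ once $\widetilde\delta\equiv 0$ is chosen; without ($\varphi$2), this would fail to be uniformly bounded in $t$.
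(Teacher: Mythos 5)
Your proposal is correct and follows essentially the same route as the paper: test (GS) with the affine competitor $\widetilde u_i(x)=\bar u(t)x/L$ and $\widetilde\alpha_i\equiv 1$, use \eqref{Ecoerc}, \eqref{wi} and ($\varphi$2) to bound the energy uniformly in $t$, and then pass the uniform $C^{1/2}$ modulus of $u_i(t)$ on to $\delta_h(t)$. The only cosmetic difference is in the last step, where you invoke the elementary inequality $|\sup_\tau f(\tau,x)-\sup_\tau f(\tau,y)|\le\sup_\tau|f(\tau,x)-f(\tau,y)|$ while the paper runs an equivalent $\eta$-approximation of the supremum; both are fine.
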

	\begin{proof}
		Choosing as competitors in (GS) the functions
		\begin{equation*}
			\widetilde{u}_i(x)=\frac{\bar{u}(t)}{L}x,\quad\quad\widetilde\alpha_i\equiv 1,\quad\quad\text{ for }i=1,2,
		\end{equation*}
		and exploiting \eqref{Ecoerc}, \eqref{wi} and ($\varphi$2)  we deduce that
		\begin{align*}
			\frac 12\sum_{i=1}^{2}\left(\eps\Vert u_i(t)'\Vert_{L^2(0,L)}^2+\Vert \alpha_i(t)'\Vert_{L^2(0,L)}^2\right)&\le \mathcal{E}[{\bm{u}(t)},{\bm{\alpha}(t)}]+\mathcal{D}[{\bm{\alpha}(t)}]
			+\mathcal{K}[{\delta}(t),\delta_h(t)]\\
			&\le\mathcal{E}[(\widetilde{u}_1,\widetilde{u}_2), (1,1)]+\mathcal{D}[(1,1)]+\mathcal{K}[0,\delta_h(t)]\\
			&\le C_1\left(\Vert \bar{u}\Vert^2_{C^0([0,T])}+1\right),
		\end{align*}
		for every $t\in[0,T]$, 	where $C_1$ is a suitable positive constant independent of $t$. Since $u_i(t,0)=0$ and $0\le \alpha_i(t,x)\le 1$, we deduce \eqref{bounds1sol}.\par
		By \eqref{bounds1sol} and Sobolev embedding Theorems, we now know that $u_i(t)$ are uniformly H\"older-continuous with exponent $1/2$, for every $t\in[0,T]$.
		We thus fix $t\in[0,T]$ and $x,y\in[0,L]$; by definition of $\delta_h(t,x)$, for every $\eta>0$ there exists $\tau_\eta\in[0,t]$ such that
		\begin{equation*}
			\delta_h(t,x)-\eta\le |u_1(\tau_\eta,x)-u_2(\tau_\eta,x)|.
		\end{equation*}
		Hence we can estimate:
		\begin{align*}
			\delta_h(t,x)-\eta\le& |u_1(\tau_\eta,x)-u_1(\tau_\eta,y)|+|u_1(\tau_\eta,y)-u_2(\tau_\eta,y)|+|u_2(\tau_\eta,y)-u_2(\tau_\eta,x)|\\
			\le& \frac{C}{\sqrt{\eps}}\sqrt{|x-y|}+\delta_h(t,y),
		\end{align*}
		for any $t\in[0,T]$ and $x,y\in[0,L]$. By the arbitrariness of $\eta$ and reverting the role of $x$ and $y$ we deduce that $\delta_h(t)$ is H\"older-continuous with exponent $1/2$ and \eqref{esthold} holds true.
		Trivially $\delta_h(t,0)=\delta_h(t,L)=0$ and so we conclude.
	\end{proof}
	\begin{rmk}
		In the previous proposition we stressed the dependence on $\eps>0$  to point out the importance of assumption \eqref{Ecoerc}, which ensures the coerciveness of the elastic energy.
		In the complete damage setting, where $E_i$ can vanish, one needs to consider the sequence of functions $E_i+\eps$, fulfilling \eqref{Ecoerc}, and then to perform an analysis of the limit $\eps \to 0^+$, usually via $\Gamma$--convergence \cite{DalMasoGamma}. We refer for instance to \cite{BouchMielkRoub,MielkRoub} for a model of contact between two viscoelastic bodies, or to \cite{BonFreSeg}.
	\end{rmk}
	As we said in the Introduction, the common procedure used to prove existence of energetic evolutions (and which we will perform in Section~\ref{secexistence}) is based on a time
	discretisation algorithm and then on a limit passage as the time step goes to $0$. Due to lack of compactness for the history variable $\delta_h$, one needs to weaken the notion
	of energetic evolution and to introduce a fictitious variable $\gamma$ replacing $\delta_h$ (see also \cite{CagnToad, DMZan}). Thanks to Proposition~\ref{unifboundsol} we however
	expect that $\gamma(t)$ should be at least continuous in $[0,L]$; we are thus led to the following definition:
	\begin{defi}\label{Generalisedenersol}
		Given a prescribed displacement $\bar{u}\in AC([0,T])$ and initial data $\bm{u}^0$, $\bm{\alpha}^0$ satisfying \eqref{compatibility}, we say that a triple
		$(\bm{u},\bm{\alpha},\gamma)\colon[0,T]\times[0,L]\to \erre^2\times\erre^2\times\erre$ is a \textbf{generalised energetic evolution} if:
		\begin{itemize}
			\item[(CO')] $\bm{u}(t)\in [H^1_{0,\bar{u}(t)}(0,L)]^2$, $\bm{\alpha}(t)\in [H^1_{[0,1]}(0,L)]^2$, $\gamma(t)\in C^0([0,L])$, for every $t\in[0,T]$;
			\item[(ID')] $\bm{u}(0)=\bm{u}^0$, $\bm{\alpha}(0)=\bm{\alpha}^0$, $\gamma(0)=\delta^0$;
			\item[(IR')] for $i=1,2$ the damage function $\alpha_i$ and the generalised history variable $\gamma$ are non-decreasing in time, namely,
			\begin{equation*}
				\text{ for every }0\le s\le t\le T	\text{ it holds: } \alpha_i(s,x)\le\alpha_i(t,x), \,\text{ for every }x\in[0,L];
			\end{equation*}
			\begin{equation*}
				\text{ for every }0\le s\le t\le T	\text{ it holds: } \gamma(s,x)\le\gamma(t,x), \,	\text{ for every }x\in[0,L];
			\end{equation*}
			\item[(GS')] for every $t\in[0,T]$ one has $\gamma(t)\ge \delta(t)$ in $[0,L]$ and:
			\begin{equation*}
				\mathcal{E}[\bm{u}(t),\bm{\alpha}(t)]+\mathcal D[\bm{\alpha}(t)]+\mathcal{K}[\delta(t),\gamma(t)]\le\mathcal{E}[{\widetilde{\bm u}},{\widetilde{\bm\alpha}}]
				+\mathcal D[{\widetilde{\bm\alpha}}]+\mathcal{K}[\widetilde\delta,\gamma(t)\vee\widetilde\delta],
			\end{equation*}
			for every ${\widetilde{\bm u}}\in [H^1_{0,\bar{u}(t)}(0,L)]^2$ and for every ${\widetilde{\bm\alpha}}\in [H^1(0,L)]^2$ such that ${\alpha}_i(t)\le{\widetilde{\alpha}_i}
			\le {1}$ in $[0,L]$ for $i=1,2$;
			\item[(EB')] the function $\displaystyle\tau\mapsto \frac{\dot{\bar{u}}(\tau)}{L}\int_{0}^{L}\sum_{i=1}^{2} \sigma_i(\tau,x)\d x$ belongs to $L^1(0,T)$ and for every $t\in[0,T]$ it holds:
			\begin{equation*}
				\mathcal{E}[\bm{u}(t),\bm{\alpha}(t)]+\mathcal D[\bm{\alpha}(t)]+\mathcal{K}[\delta(t),\gamma(t)]=\mathcal{E}[\bm{u}^0,\bm{\alpha}^0]+\mathcal D[\bm{\alpha}^0]
				+\mathcal{K}[\delta^0,\delta^0]+\mathcal{W}[\bm{u},\bm{\alpha}](t),
			\end{equation*}
			where $\mathcal{W}[\bm{u},\bm{\alpha}](t)$ is defined as in  \eqref{work}.
		\end{itemize}
	\end{defi}
	\begin{rmk}\label{Rmkimportant}
		If conditions ($\varphi$5), ($\varphi$6) and ($\varphi$7) are satisfied, then equality \eqref{equalitiphidelta} allows us to replace the function $\varphi$ in the functional
		$\mc K$ (see \eqref{Cohesiveenergy}) by $\varphi_{\bar\delta}$. This means that the functions which actually play a role in the cohesive energy are
		$\delta\wedge\bar\delta$, $\delta_h\wedge\bar\delta$ and $\gamma\wedge\bar\delta$. This observation will be useful in Section~\ref{sectemporalregularity}.
	\end{rmk}
	From the very definition it is easy to see that a pair $(\bm{u},\bm{\alpha})$ is an energetic evolution if and only if the triple $(\bm{u},\bm{\alpha},\delta_h)$ is a generalised
	energetic evolution. It is also easy to see that given a generalised energetic evolution $(\bm{u},\bm{\alpha},\gamma)$ it necessarily holds $\gamma(t,x)\ge \delta_h(t,x)$,
	for every $(t,x)\in[0,T]\times[0,L]$. Unfortunately, there are no easy arguments which ensure that $\gamma=\delta_h$ in a general case. This will be the topic of
	Section~\ref{sectemporalregularity} and the main outcome of the paper.\par
	We finally notice that the same argument used to prove Proposition~\ref{unifboundsol} leads to the bound \eqref{bounds1sol} also for a generalised energetic evolution.
	However \eqref{esthold} only holds for $\delta_h$ due to its explicit definition \eqref{historicalslip}, and nothing can be said, in general, about the generalised history
	variable $\gamma$.
	
	\section{Existence Result}\label{secexistence}
	In this section we show existence of generalised energetic evolutions under very weak assumptions on the data, especially on the density $\varphi$.
	We indeed require \eqref{Ecoerc}, \eqref{wi} and only ($\varphi$1), ($\varphi$2), ($\varphi$3), see Theorem~\ref{exgenensol}. Of course we always assume that the prescribed
	displacement $\bar u$ belongs to $AC([0,T])$. We then prove the existence of an energetic evolution assuming the specific assumption ($\varphi$4), following the same approach
	of \cite{CagnToad}, see Theorem~\ref{exensol}. We will overcome the necessity of ($\varphi$4) in Section~\ref{sectemporalregularity}, recovering the existence of energetic evolutions in meaningful mechanical
	situations (namely assuming ($\varphi$5)--($\varphi$9), see also Remark~\ref{rmkexamplephi}) and thus obtaining our main result, Theorem~\ref{finalthm}.\par
	The classical tool used to prove existence of energetic evolutions is a time--discretisation procedure. Here we combine the ideas of \cite{MielkRoub} to deal with the irreversible
	damage variables and of \cite{CagnToad, DMZan} to handle the history variable.
	\subsection{Time-discretisation}\label{subseq1}
	We consider a sequence of partition $0=t^n_0<t^n_1<\dots<t^n_n=T$ such that
	\begin{equation}\label{finepartition}
		\lim\limits_{n\to+\infty}\max\limits_{k=1,\cdots,n}(t^n_{k}-t^n_{k-1})=0,
	\end{equation}
	and for $k=1,\cdots,n$ we perform the following implicit Euler scheme: given $(\bm{u}^{k-1},\bm{\alpha}^{k-1}, \delta_h^{k-1})$, we first select
	$(\bm{u}^k,\bm{\alpha}^k)$ by minimising the total energy among suitable natural competitors:
	\begin{subequations}
		\begin{equation}\label{minalg}
			(\bm{u}^k,\bm{\alpha}^k)\in\argmin\limits_{\substack{\widetilde{\bm{u}}\in[H^1_{0,\bar{u}(t^n_k)}(0,L)]^2,\\\widetilde{\bm{\alpha}}\in [H^1(0,L)]^2\,
					\text{s.t. }{\alpha}_i^{k-1}\le\widetilde{{\alpha}}_i\le 1}}\Big\{\mathcal{E}[\widetilde{\bm{u}},\widetilde{\bm{\alpha}}]
			+\mathcal{D}[\widetilde{\bm{\alpha}}]+\mathcal{K}[\widetilde{\delta},\delta_h^{k-1}\vee\widetilde{\delta}]\Big\}.
		\end{equation}
		Here we want to recall that we mean $\widetilde{\delta}=|\widetilde{u}_1-\widetilde{u}_2|$.\par
		We then define $\delta_h^k$ as:
		\begin{equation}
			\delta_h^k:=\delta_h^{k-1}\vee|u_1^k-u_2^k|=\delta_h^{k-1}\vee\delta^k.
		\end{equation}
	\end{subequations}
	The initial values in the minimisation algorithm are functions $(\bm{u}^0,\bm{\alpha}^0)$ satisfying the compatibility conditions \eqref{compatibility};
	moreover we set $\delta_h^0:=\delta^0=|u_1^0-u_2^0|$.
	
	\begin{prop}
		Assume $E_i$ satisfies \eqref{Ecoerc}, $w_i$ satisfies \eqref{wi} and $\varphi$ satisfies ($\varphi$1). Then there exists a solution to the minimisation algorithm \eqref{minalg}.
	\end{prop}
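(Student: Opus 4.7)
The plan is the classical direct method in the calculus of variations. Denote the functional in \eqref{minalg} by $F$ and let $\mathcal{A}$ be the admissible set. First I would verify that $\mathcal{A}$ is non-empty (take for instance $\widetilde{u}_i(x)=\bar{u}(t^n_k)x/L$ and $\widetilde{\alpha}_i\equiv 1$, which are admissible since $\alpha_i^{k-1}\le 1$ by the inductive choice of the scheme) and that $F$ is non-negative, so $\inf_{\mathcal{A}} F$ is a finite real number. Fix a minimising sequence $(\widetilde{\bm u}^n,\widetilde{\bm\alpha}^n)\subset\mathcal{A}$.

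The next step is to derive a priori $H^1$-bounds. Using \eqref{Ecoerc}, \eqref{wi}, the non-negativity of $\varphi$, and the Poincar\'e-type control coming from the boundary condition $\widetilde{u}_i^n(0)=0$, the bound $F(\widetilde{\bm u}^n,\widetilde{\bm\alpha}^n)\le C$ yields
\begin{equation*}
\frac{\eps}{2}\sum_{i=1}^{2}\Vert (\widetilde{u}_i^n)'\Vert_{L^2(0,L)}^2+\frac{1}{2}\sum_{i=1}^{2}\Vert(\widetilde{\alpha}_i^n)'\Vert_{L^2(0,L)}^2\le C,
\end{equation*}
so that $\widetilde{\bm u}^n$ is bounded in $[H^1(0,L)]^2$; since $0\le\widetilde{\alpha}_i^n\le 1$, also $\widetilde{\bm\alpha}^n$ is bounded in $[H^1(0,L)]^2$. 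Extracting subsequences (not relabelled), I would pass to weak limits $\widetilde{u}_i^n\rightharpoonup \widetilde u_i$ and $\widetilde{\alpha}_i^n\rightharpoonup\widetilde\alpha_i$ in $H^1(0,L)$, which by the compact embedding $H^1(0,L)\hookrightarrow C^0([0,L])$ upgrades to uniform convergence. Uniform convergence preserves both the boundary conditions $\widetilde u_i(0)=0$, $\widetilde u_i(L)=\bar u(t^n_k)$ and the pointwise constraint $\alpha_i^{k-1}\le\widetilde\alpha_i\le 1$, so the limit lies in $\mathcal{A}$.

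It remains to check that $F$ is sequentially lower semicontinuous along this sequence. For $\mathcal{D}$ the gradient part is lsc by weak convergence in $L^2$ and the term $\int w_i(\widetilde\alpha_i^n)$ converges by uniform convergence of $\widetilde{\alpha}_i^n$ combined with continuity of $w_i$ on $[0,1]$. For $\mathcal{E}$, since $E_i\in C^0([0,1])$ and $\widetilde{\alpha}_i^n\to\widetilde\alpha_i$ uniformly, the factors $\sqrt{E_i(\widetilde{\alpha}_i^n)}$ converge uniformly to $\sqrt{E_i(\widetilde\alpha_i)}$, so $\sqrt{E_i(\widetilde{\alpha}_i^n)}(\widetilde{u}_i^n)'\rightharpoonup\sqrt{E_i(\widetilde\alpha_i)}\widetilde u_i'$ in $L^2$ and lower semicontinuity of the $L^2$-norm concludes. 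For $\mathcal{K}$, the uniform convergence of $\widetilde u_i^n$ gives uniform convergence of the pair $(\widetilde\delta^n,\delta_h^{k-1}\vee\widetilde\delta^n)$ to $(\widetilde\delta,\delta_h^{k-1}\vee\widetilde\delta)$ pointwise in $[0,L]$; by the lower semicontinuity of $\varphi$ assumed in ($\varphi$1) and the non-negativity of the integrand, Fatou's lemma gives
\begin{equation*}
\mathcal{K}[\widetilde\delta,\delta_h^{k-1}\vee\widetilde\delta]\le\liminf_{n\to+\infty}\mathcal{K}[\widetilde\delta^n,\delta_h^{k-1}\vee\widetilde\delta^n].
\end{equation*}
Hence $(\widetilde{\bm u},\widetilde{\bm\alpha})$ attains the infimum and is the sought minimiser.

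The only delicate point I foresee is the lsc of $\mathcal{K}$: one must ensure that the maximum $\delta_h^{k-1}\vee\widetilde\delta^n$ also converges (uniformly, hence pointwise) to $\delta_h^{k-1}\vee\widetilde\delta$ so that Fatou can be applied to the integrand $\varphi(\widetilde\delta^n(x),\delta_h^{k-1}(x)\vee\widetilde\delta^n(x))$, but this is immediate from the uniform convergence of $\widetilde\delta^n$ and the continuity of the $\vee$ operation. No stronger assumption than ($\varphi$1) is needed at this stage.
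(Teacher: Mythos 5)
Your proposal is correct and follows essentially the same route as the paper: the direct method with $H^1$-bounds from coercivity, weak compactness upgraded to uniform convergence via $H^1(0,L)\subset\subset C^0([0,L])$, closedness of the constraint/boundary conditions under uniform convergence, weak $L^2$-convergence of $\sqrt{E_i(\widetilde\alpha_i^n)}(\widetilde u_i^n)'$ for the lower semicontinuity of $\mathcal{E}$, and Fatou's lemma with ($\varphi$1) for $\mathcal{K}$. The only cosmetic difference is that the paper encodes the constraints via the indicator function of the admissible set, while you check admissibility of the limit directly; the substance is identical.
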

	\begin{proof}
		We fix $n\in\enne$ and for every $k=1,\dots,n$ we prove the existence of a minimum by means of the direct method of Calculus of Variations.
		For the sake of clarity we denote by $\mc F^{k-1}$ the functional we want to minimise, namely
		\begin{equation}
			\mc F^{k-1}[\widetilde{\bm u},\widetilde{\bm{\alpha}}]=
			\mathcal{E}[\widetilde{\bm{u}},\widetilde{\bm{\alpha}}]+\mathcal{D}[\widetilde{\bm{\alpha}}]+\mathcal{K}[\widetilde{\delta},\delta_h^{k-1}\vee\widetilde{\delta}]+\chi_{A^{k-1}}[\widetilde{\bm u},\widetilde{\bm{\alpha}}],
		\end{equation}
		where $\chi_{A^{k-1}}$ denotes the indicator function of the set of constraints $A^{k-1}$, which is given by
		\begin{equation*}
			A^{k-1}:=\{(\widetilde{\bm u},\widetilde{\bm{\alpha}})\in [H^1_{0,\bar{u}(t^n_k)}(0,L)]^2\times[H^1(0,L)]^2\mid {\alpha}_i^{k-1}(x)\le\widetilde{{\alpha}}_i(x)\le 1 \text{ for every }x\in[0,L]\}.
		\end{equation*}
		Weak (sequential) compactness in $[H^1(0,L)]^4$ for a minimising sequence for $\mc F^{k-1}$ follows by means of uniform bounds which can be obtained by reasoning as in the proof of Proposition~\ref{unifboundsol}.
		
		As regards the (sequential) lower semicontinuity of $\mathcal{F}^{k-1}$ with respect the considered topology we exploit the compact embedding $H^1(0,L)\subset\subset C^0({0,L})$.
		By ($\varphi$1) and Fatou's Lemma we thus deduce that $\mc K$ is lower semicontinuous; the same holds true for
		$\mc D$ by using again Fatou's Lemma together with weak lower semicontinuity of the norm. To prove lower semicontinuity of $\mc E$ it is enough to show that,
		given weakly convergent sequences $\widetilde{u}_i^j\rightharpoonup\widetilde{u}_i$, $\widetilde{\alpha}_i^j\rightharpoonup\widetilde{\alpha}_i$ in $H^1(0,L)$,
		we have that $\sqrt{E_i(\widetilde{\alpha}_i^j)}(\widetilde{u}_i^j){'}$ weakly converges to $\sqrt{E_i(\widetilde{\alpha}_i)}{\widetilde{u}_i}{'}$ in $L^2(0,L)$ as $j\to +\infty$,
		for $i=1,2$. To prove it we fix $\phi\in L^2(0,L)$ and we estimate by exploiting \eqref{Ecoerc}:
		\begin{align*}
			&\quad\left|\int_{0}^{L}\sqrt{E_i(\widetilde{\alpha}_i^j(x))}(\widetilde{u}_i^j){'}(x)\phi(x)\d x
			-\int_{0}^{L}\sqrt{E_i(\widetilde{\alpha}_i(x))}{\widetilde{u}_i}{'}(x)\phi(x)\d x\right|\\
			&\le\left\Vert\sqrt{E_i(\widetilde{\alpha}_i^j)}-\sqrt{E_i(\widetilde{\alpha}_i)}\right\Vert_{C^0([0,L])}\Vert\widetilde{u}_i^j\Vert_{H^1(0,L)}\Vert\phi\Vert_{L^2(0,L)}\\
			&\quad+\left|\int_{0}^{L}(\widetilde{u}_i^j){'}(x)\sqrt{E_i(\widetilde{\alpha}_i(x))}\phi(x)\d x
			-\int_{0}^{L}{\widetilde{u}_i}{'}(x)\sqrt{E_i(\widetilde{\alpha}_i(x))}\phi(x)\d x\right|.
		\end{align*}
		The first term goes to zero as $j\to +\infty$ since $\widetilde{\alpha}_i^j$ uniformly converges to $\widetilde{\alpha}_i$ as $j\to +\infty$ and the function $E_i$ is continuous.
		The second term vanishes too as $j\to +\infty$ since $\sqrt{E_i(\widetilde{\alpha}_i)}\phi$ belongs to $L^2(0,L)$ by the boundedness of $E_i$.\par
		We conclude by noticing that, exploiting again the compactness of the embedding $H^1(0,L)\subset\subset C^0({0,L})$, the set $A^{k-1}$ is (sequentially) closed with respect to the considered topology, and thus its indicator function $\chi_{A^{k-1}}$ is lower semicontinuous as well.
	\end{proof}\noindent
	To pass from discrete to continuous evolutions we now introduce the (right-continuous) piecewise constant interpolants $(\bm{u}^n,\bm{\alpha}^n)$ of the discrete displacement and
	damage variables, and the piecewise constant interpolant $\delta^n_h$ of the discrete history variable, namely:
	\begin{subequations}\label{interpolation}
		\begin{equation}\label{interpolationa}
			\begin{cases}
				\bm{u}^n(t):=\bm{u}^k,\quad\,\,\bm{\alpha}^n(t):=\bm{\alpha}^k,\quad \,\,\delta_h^n(t):=\delta_h^k,\quad\quad\text{if }t\in[t^n_k,t^n_{k+1}),\\
				\bm{u}^n(T):=\bm{u}^n,\quad\bm{\alpha}^n(T):=\bm{\alpha}^n,\quad\delta_h^n(T):=\delta_h^n.
			\end{cases}
		\end{equation}
		Of course, in the following, by the expression $\delta^n$ we mean the piecewise constant slip, namely \begin{equation}
			\delta^n(t,x)=|u_1^n(t,x)-u_2^n(t,x)|.
		\end{equation}	
		Analogously, we consider a piecewise constant version $\bar{u}^n$ of the prescribed displacement:
		\begin{equation}
			\begin{cases}
				\bar{u}^n(t):=\bar{u}(t^n_k),\quad\text{if }t\in[t^n_k,t^n_{k+1}),\\
				\bar{u}^n(T):=\bar{u}(T).
			\end{cases}
		\end{equation}
		We also adopt the following notation:
		\begin{equation}
			\tau^n(t):=\max\{t^n_k\mid t^n_k\le t\}.
		\end{equation}
	\end{subequations}
	The next proposition provides useful uniform bounds on the just introduced piecewise constant interpolants. It is the analogue of Proposition~\ref{unifboundsol} in this discrete setting.
	\begin{prop}\label{unifbound}
		Assume $E_i$ satisfies \eqref{Ecoerc}, $w_i$ satisfies \eqref{wi} and $\varphi$ satisfies ($\varphi$1), ($\varphi$2). Then there exists a positive constant $C$ independent of $n$
		such that:
		\begin{subequations}
			\begin{equation}\label{bounds1}
				\max\limits_{t\in[0,T]}\Vert\bm{u}^n(t)\Vert_{[H^1(0,L)]^2}\le \frac{C}{\sqrt{\eps}},\quad \quad\max\limits_{t\in[0,T]}\Vert\bm{\alpha}^n(t)\Vert_{[H^1(0,L)]^2}\le C,
			\end{equation}
			\begin{equation}\label{bounds2}
				\max\limits_{t\in[0,T]}\left(\sup\limits_{x,y\in[0,L],\, x\neq y}\frac{|\delta_h^n(t,x)-\delta_h^n(t,y)|}{\sqrt{|x-y|}}\right)\le \frac{C}{\sqrt{\eps}},
			\end{equation}
		\end{subequations}
		where $\eps>0$ has been introduced in \eqref{epsilon}.
	\end{prop}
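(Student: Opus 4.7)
The plan is to mimic the proof of Proposition~\ref{unifboundsol} at the discrete level, exploiting the minimality property \eqref{minalg} with the same "trivial" competitor and then obtaining the H\"older estimate through Sobolev embedding. The only subtle point is that the competitor's right-hand side involves $\delta_h^{k-1}$; this is precisely why assumption ($\varphi$2) enters.

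First, I fix $n\in \enne$ and $k\in\{1,\dots,n\}$, and I plug into the minimality condition \eqref{minalg} the competitor
\begin{equation*}
\widetilde u_i(x):=\frac{\bar u(t^n_k)}{L}x,\qquad \widetilde\alpha_i\equiv 1,\qquad i=1,2,
\end{equation*}
which is admissible since $\alpha_i^{k-1}\le 1$. For this competitor one has $\widetilde\delta\equiv 0$, so that $\delta_h^{k-1}\vee\widetilde\delta=\delta_h^{k-1}$ and hence, by ($\varphi$2), $\mathcal{K}[\widetilde\delta,\delta_h^{k-1}\vee\widetilde\delta]\le L\sup_{z\ge 0}\varphi(0,z)$. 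Combining this with \eqref{Ecoerc} and \eqref{wi} yields, independently of $k$ and $n$,
\begin{equation*}
\frac{1}{2}\sum_{i=1}^{2}\Big(\eps\Vert (u_i^k)'\Vert_{L^2(0,L)}^2+\Vert (\alpha_i^k)'\Vert_{L^2(0,L)}^2\Big)
\le \mc F^{k-1}[\widetilde{\bm u},\widetilde{\bm\alpha}]
\le C_1\bigl(\Vert\bar u\Vert^2_{C^0([0,T])}+1\bigr).
\end{equation*}
Since $u_i^k(0)=0$ and $0\le\alpha_i^k(x)\le 1$, Poincar\'e's inequality together with the boundary condition at $x=L$ gives \eqref{bounds1} after passing to the piecewise constant interpolants. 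Notice that no iteration on $k$ is needed here: the only place where $\delta_h^{k-1}$ appeared has already been absorbed into a uniform constant thanks to ($\varphi$2).

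For \eqref{bounds2}, I observe first that, by induction on $k$ and the recursive definition $\delta_h^k=\delta_h^{k-1}\vee\delta^k$ with $\delta_h^0=\delta^0$, one has
\begin{equation*}
\delta_h^n(t,x)=\max_{0\le j\le k}|u_1^j(x)-u_2^j(x)|,\qquad t\in[t^n_k,t^n_{k+1}).
\end{equation*}
By \eqref{bounds1} and the embedding $H^1(0,L)\hookrightarrow C^{1/2}([0,L])$, every $u_i^j$ is H\"older continuous with exponent $1/2$ and H\"older seminorm bounded by $C/\sqrt\eps$ uniformly in $j$ and $n$. Fixing $t\in[0,T]$ and $x,y\in[0,L]$, for every $\eta>0$ I can pick $j_\eta$ realising $\delta_h^n(t,x)-\eta\le |u_1^{j_\eta}(x)-u_2^{j_\eta}(x)|$ and estimate
\begin{equation*}
\delta_h^n(t,x)-\eta\le |u_1^{j_\eta}(x)-u_1^{j_\eta}(y)|+|u_1^{j_\eta}(y)-u_2^{j_\eta}(y)|+|u_2^{j_\eta}(y)-u_2^{j_\eta}(x)|\le \frac{C}{\sqrt\eps}\sqrt{|x-y|}+\delta_h^n(t,y).
\end{equation*}
Letting $\eta\to 0^+$ and swapping the role of $x$ and $y$ yields \eqref{bounds2}.

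I do not foresee a genuine obstacle: the only place where care is required is checking that the competitor energy is bounded independently of the time step $k$, which is exactly what ($\varphi$2) is designed for. Everything else is a discrete rephrasing of Proposition~\ref{unifboundsol}.
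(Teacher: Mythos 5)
Your proof is correct and follows essentially the same route as the paper, which simply plugs the competitor $\widetilde u_i(x)=\bar u(t^n_k)x/L$, $\widetilde\alpha_i\equiv 1$ into the discrete minimality \eqref{minalg} and then repeats the argument of Proposition~\ref{unifboundsol}, with ($\varphi$2) absorbing the term $\mathcal K[0,\delta_h^{k-1}]$ and the H\"older bound for $\delta_h^n$ obtained from the uniform $C^{1/2}$ bound on the discrete displacements exactly as you do.
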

	\begin{proof}
		The result follows by using exactly the same argument of Proposition~\ref{unifboundsol}. We only notice that here we need to choose as competitors for $(\bm{u}^k,\bm{\alpha}^k)$ 
		in \eqref{minalg} the functions
		\begin{equation*}
			\widetilde{u}_i(x)=\frac{\bar{u}(t^n_k)}{L}x,\quad\quad\widetilde\alpha_i\equiv 1,\quad\quad\text{ for }i=1,2,
		\end{equation*}
		and then we argue in the same way.
	\end{proof}
	Since the piecewise constant interpolants are built starting from the minimisation algorithm \eqref{minalg}, they automatically fulfil the following inequality, which is related to 
	the energy balance (EB):
	\begin{lemma}[\textbf{Discrete Energy Inequality}]\label{discreteenergyineq}
		Assume $E_i$ satisfies \eqref{Ecoerc}, $w_i$ satisfies \eqref{wi} and $\varphi$ satisfies ($\varphi$1), ($\varphi$2). Then there exists a vanishing sequence of positive 
		real numbers $R^n$ such that for every $t\in[0,T]$ and for every $n\in\enne$ the following inequality holds true:
		\begin{align*}
			&\quad\,\,\mathcal{E}[{\bm{u}^n(t)},{\bm{\alpha}^n(t)}]+\mathcal{D}[{\bm{\alpha}^n(t)}]+\mathcal{K}[{\delta}^n(t),\delta_h^{n}(t)]\\
			&\le \mathcal{E}[{\bm{u}^0},{\bm{\alpha}^0}]+\mathcal{D}[{\bm{\alpha}^0}]+\mathcal{K}[{\delta}^0,\delta^0]+\int_{0}^{t}W^{n}(\tau)d\tau+R^n,
		\end{align*}
		where $\displaystyle W^{n}(\tau):=\frac{\dot{\bar{u}}(\tau)}{L}\sum_{i=1}^{2}\int_{0}^{L}E_i(\alpha^n_i(\tau,x))(u_i^n)'(\tau,x)\d x$.
	\end{lemma}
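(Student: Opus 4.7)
The plan is to derive a one-step discrete energy inequality out of the minimality of $(\bm u^k,\bm\alpha^k)$ at each time node, and then telescope it across $k$. The delicate point is to choose a competitor in \eqref{minalg} which lifts the Dirichlet datum from $\bar u(t^n_{k-1})$ to $\bar u(t^n_k)$ \emph{without} altering the slip $\delta$, so that the cohesive/history term $\mc K$ re--appears cleanly on the right--hand side.

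Concretely, for every $k\geq 1$ set $\Delta^k_n:=\bar u(t^n_k)-\bar u(t^n_{k-1})$ and test \eqref{minalg} against
\begin{equation*}
\widetilde u_i(x):=u_i^{k-1}(x)+\frac{\Delta^k_n}{L}\,x,\qquad \widetilde\alpha_i:=\alpha_i^{k-1},\qquad i=1,2.
\end{equation*}
Then $\widetilde{u}_i\in H^1_{0,\bar u(t^n_k)}(0,L)$, the damage constraint is trivially met, and the identity $\widetilde u_1-\widetilde u_2=u_1^{k-1}-u_2^{k-1}$ gives $\widetilde\delta=\delta^{k-1}\le \delta_h^{k-1}$, whence $\mc K[\widetilde\delta,\delta_h^{k-1}\vee\widetilde\delta]=\mc K[\delta^{k-1},\delta_h^{k-1}]$ and $\mc D[\widetilde{\bm\alpha}]=\mc D[\bm\alpha^{k-1}]$. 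Expanding the square in the elastic energy,
\begin{equation*}
\mc E[\widetilde{\bm u},\bm\alpha^{k-1}]=\mc E[\bm u^{k-1},\bm\alpha^{k-1}]+\frac{\Delta^k_n}{L}\sum_{i=1}^{2}\int_0^L\! E_i(\alpha_i^{k-1})(u_i^{k-1})'\d x+\rho^k_n,
\end{equation*}
with $\rho^k_n:=\frac{(\Delta^k_n)^2}{2L^2}\sum_{i}\int_0^L E_i(\alpha_i^{k-1})\d x=O\!\big((\Delta^k_n)^2\big)$ uniformly in $k,n$ by \eqref{Ecoerc} and continuity of $E_i$. Setting $F^k:=\mc E[\bm u^k,\bm\alpha^k]+\mc D[\bm\alpha^k]+\mc K[\delta^k,\delta_h^k]$ and recalling $\delta_h^{k-1}\vee\delta^k=\delta_h^k$, the minimality property yields
\begin{equation*}
F^k\le F^{k-1}+\frac{\Delta^k_n}{L}\sum_{i=1}^{2}\int_0^L\! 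E_i(\alpha_i^{k-1})(u_i^{k-1})'\d x+\rho^k_n.
\end{equation*}

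Since $\bm u^n\equiv\bm u^{k-1}$, $\bm\alpha^n\equiv\bm\alpha^{k-1}$ on $[t^n_{k-1},t^n_k)$, the middle term is exactly $\int_{t^n_{k-1}}^{t^n_k}W^n(\tau)\d\tau$. For $t\in[t^n_j,t^n_{j+1})$, iterating from $k=1$ to $j$ and telescoping gives
\begin{equation*}
F^j\le F^0+\int_0^{t^n_j}\! W^n(\tau)\d\tau+\sum_{k=1}^{j}\rho^k_n = F^0+\int_0^t\! W^n(\tau)\d\tau-\int_{t^n_j}^t\! W^n(\tau)\d\tau+\sum_{k=1}^{j}\rho^k_n,
\end{equation*}
which is the claimed inequality with
\begin{equation*}
R^n:=\sum_{k=1}^n\rho^k_n+\sup_{t\in[0,T]}\left|\int_{\tau^n(t)}^t W^n(\tau)\d\tau\right|.
\end{equation*}

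The last step is to show $R^n\to 0$. Using \eqref{bounds1} and \eqref{Ecoerc} one finds a constant $M$ independent of $n,k$ with $\rho^k_n\le M(\Delta^k_n)^2$, so that
\begin{equation*}
\sum_{k=1}^n(\Delta^k_n)^2\le\Big(\max_{k}|\Delta^k_n|\Big)\sum_{k=1}^n|\Delta^k_n|\le \omega_{\bar u}(h_n)\cdot V(\bar u;[0,T]),
\end{equation*}
where $h_n:=\max_k(t^n_k-t^n_{k-1})\to 0$ by \eqref{finepartition} and $\omega_{\bar u}$ is the modulus of uniform continuity of $\bar u\in AC([0,T])$, whose total variation $V(\bar u;[0,T])=\int_0^T|\dot{\bar u}|\d\tau$ is finite. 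For the second term, H\"older's inequality and \eqref{bounds1} give a uniform estimate $|W^n(\tau)|\le C|\dot{\bar u}(\tau)|$, and the absolute continuity of $\bar u$ implies $\sup_{|s-t|\le h_n}\int_s^t|\dot{\bar u}|\d\tau\to 0$. I expect the only subtle point of the whole argument to be this very first step, i.e.\ producing a competitor that simultaneously absorbs the change in the Dirichlet boundary datum and leaves the slip (hence the irreversible cohesive part of the energy) unchanged; everything after that is a standard telescoping/Riemann--sum computation.
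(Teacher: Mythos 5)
Your proof is correct and follows essentially the same route as the paper: the same competitor $u_i^{k-1}+\Delta^k_n x/L$, $\alpha_i^{k-1}$ in \eqref{minalg}, the same telescoping, and the same use of $|W^n(\tau)|\le C|\dot{\bar u}(\tau)|$ to kill the remainder. The only (cosmetic) difference is that the paper writes the one-step elastic increment exactly as a time integral, absorbing the quadratic term into an error involving $\bar u(\tau)-\bar u^n(\tau)$, whereas you expand the square and bound $\sum_k(\Delta^k_n)^2$ directly; both estimates are equivalent.
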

	\begin{proof}
		We fix $n\in \enne$ and $k\in\{1,\cdots,n\}$; for $j=1,\cdots,k$ we then choose as competitors for $(\bm{u}^j,\bm{\alpha}^j)$ in \eqref{minalg} the functions
		$\widetilde{\bm{u}}$, $\widetilde{\bm{\alpha}}$, with components:
		\begin{equation*}
			\widetilde{u}_i(x)={u}_i^{j-1}(x)+(\bar{u}(t^n_j)-\bar{u}(t^n_{j-1}))x/L,\quad\text{ and }\widetilde{{\alpha}}_i={\alpha}_i^{j-1},\text{ for }i=1,2.
		\end{equation*}
		We thus obtain:
		\begin{align*}
			\mathcal{E}[{\bm{u}^j},{\bm{\alpha}^j}]+\mathcal{D}[{\bm{\alpha}^j}]+\mathcal{K}[{\delta}^j,\delta_h^{j}]\le\mathcal{E}[\bm{u}^{j-1}+\bm{v}^{j-1},\bm{\alpha}^{j-1}]
			+\mathcal{D}[\bm{\alpha}^{j-1}]+\mathcal{K}[\delta^{j-1},\delta_h^{j-1}],
		\end{align*}
		where we denoted by $\bm{v}^{j-1}(x)$ the vector in $\erre^2$ with both components equal to $(\bar{u}(t^n_j)-\bar{u}(t^n_{j-1}))\frac xL$. From the above inequality we now get:
		\begin{align*}
			&\mathcal{E}[{\bm{u}^j},{\bm{\alpha}^j}]+\mathcal{D}[{\bm{\alpha}^j}]
			+\mathcal{K}[{\delta}^j,\delta_h^{j}]-\mathcal{E}[{\bm{u}^{j-1}},{\bm{\alpha}^{j-1}}]-\mathcal{D}[{\bm{\alpha}^{j-1}}]-\mathcal{K}[{\delta}^{j-1},\delta_h^{{j-1}}]\\
			\le&\mathcal{E}[\bm{u}^{j-1}+\bm{v}^{j-1},\bm{\alpha}^{j-1}]-\mathcal{E}[\bm{u}^{j-1},\bm{\alpha}^{j-1}]\\
			=&\int_{t^n_{j-1}}^{t^n_j}\frac{\dot{\bar{u}}(\tau)}{L}\sum_{i=1}^{2}\int_{0}^{L}E_i(\alpha_i^{j-1}(x))\left((u_i^{j-1})'(x)
			+\frac{\bar{u}(\tau)-\bar{u}(t^n_{j-1})}{L}\right)\d x\d\tau.
		\end{align*}
		Summing the obtained inequality from $j=1$ to $j=k$ we hence deduce:
		\begin{align*}
			&\mathcal{E}[{\bm{u}^k},{\bm{\alpha}^k}]+\mathcal{D}[{\bm{\alpha}^k}]+\mathcal{K}[{\delta}^k,\delta_h^{k}]
			-\mathcal{E}[{\bm{u}^{0}},{\bm{\alpha}^{0}}]-\mathcal{D}[{\bm{\alpha}^{0}}]-\mathcal{K}[{\delta}^{0},\delta^{{0}}]\\
			\le&\sum_{j=1}^{k}\left(\int_{t^n_{j-1}}^{t^n_j}W^{n}(\tau)\d\tau
			+\int_{t^n_{j-1}}^{t^n_j}\frac{\dot{\bar{u}}(\tau)}{L}\frac{\bar{u}(\tau)-\bar{u}^n(\tau)}{L}\sum_{i=1}^{2}\int_{0}^{L}E_i(\alpha_i^{n}(\tau,x))\d x \d\tau\right)\\
			=&\int_{0}^{t^n_k}W^{n}(\tau)\d\tau
			+\int_{0}^{t^n_k}\frac{\dot{\bar{u}}(\tau)}{L}\frac{\bar{u}(\tau)-\bar{u}^n(\tau)}{L}\sum_{i=1}^{2}\int_{0}^{L}E_i(\alpha_i^{n}(\tau,x))\d x \d\tau.
		\end{align*}
		Recalling the definition of the interpolants $\bm{u}^n$, $\bm{\alpha}^n$ and $\tau^n$, see \eqref{interpolation}, by the arbitrariness of $k$ we finally obtain
		for every $t\in[0,T]$:
		\begin{align*}
			&\quad\,\,\mathcal{E}[{\bm{u}^n(t)},{\bm{\alpha}^n(t)}]+\mathcal{D}[{\bm{\alpha}^n(t)}]+\mathcal{K}[{\delta}^n(t),\delta_h^{n}(t)]\\
			&\le \mathcal{E}[{\bm{u}^0},{\bm{\alpha}^0}]+\mathcal{D}[{\bm{\alpha}^0}]+\mathcal{K}[{\delta}^0,\delta^0]+\int_{0}^{t}W^{n}(\tau)d\tau\\
			&\quad+\int_{0}^{\tau^n(t)}\frac{\dot{\bar{u}}(\tau)}{L}\frac{\bar{u}(\tau)-\bar{u}^n(\tau)}{L}\sum_{i=1}^{2}\int_{0}^{L}E_i(\alpha_i^{n}(\tau,x))\d x \d\tau
			-\int_{\tau^n(t)}^{t}W^{n}(\tau)\d\tau.
		\end{align*}
		We thus conclude by defining:
		\begin{equation}\label{Rn}
			R^n:=\int_{0}^{T}\frac{|\dot{\bar{u}}(\tau)|}{L}\frac{|\bar{u}(\tau)-\bar{u}^n(\tau)|}{L}\sum_{i=1}^{2}\int_{0}^{L}E_i(\alpha_i^{n}(\tau,x))\d x\d\tau
			+\sup\limits_{t\in[0,T]}\int_{\tau^n(t)}^{t}|W^{n}(\tau)|\d\tau.
		\end{equation}
		Indeed we now show that $\lim\limits_{n\to +\infty} R^n=0$. First of all by the very definition of $W^{n}$ and exploiting \eqref{bounds1} it is easy to see that
		$|W^{n}(\tau)|\le C|\dot{\bar{u}}(\tau)|$, with $C>0$ independent of $n$; hence by the absolute continuity of the integral the second term in \eqref{Rn} vanishes as
		$n\to +\infty$ (we recall that by assumption the sequence of partitions satisfies \eqref{finepartition}). Then we notice that the first term is bounded by
		\begin{equation*}
			C\Vert \dot{\bar{u}}\Vert_{L^1(0,T)}\sup\limits_{t\in[0,T]}|\bar{u}(t)-\bar{u}^n(t)|,
		\end{equation*}
		which vanishes since $\bar{u}$ is absolutely continuous and the sequence of partitions satisfies \eqref{finepartition}.
	\end{proof}
	\subsection{Extraction of convergent subsequences}
	By the uniform bounds obtained in Proposition~\ref{unifbound} we are able to  deduce the existence of convergent subsequences of the piecewise constant interpolants
	$\bm{u}^{n}$, $\bm{\alpha}^{n}$ and $\delta_h^{n}$. We first need the following Helly--type compactness result:
	\begin{lemma}[\textbf{Helly}]\label{Helly}
		Let $\{f_n\}_{n\in\enne}$ be a sequence of non-decreasing functions from $[0,T]$ to $C^0([0,L])$, meaning that for every $0\le s\le t\le T$ it holds $f_n(s,x)\le f_n(t,x)$ for all $x\in[0,L]$, such that:
		\begin{itemize}
			\item the families $\{f_n(0)\}_{n\in\enne}$ and $\{f_n(T)\}_{n\in\enne}$ are equibounded;
			\item the family $\{f_n(t)\}_{n\in\enne}$ is equicontinuous uniformly with respect to $t\in[0,T]$.
		\end{itemize}
		Then there exist a subsequence (not relabelled) and a function $f\colon [0,T]\to C^0([0,L])$ such that $f_n(t)$ converges uniformly to $f(t)$ as $n\to+\infty$ for every $t\in[0,T]$, and $f$ is non-decreasing in time, in the above sense.\par
		Moreover for every $t\in[0,T]$ the right and left limits $f^\pm(t)$, which are well defined pointwise by monotonicity, actually belong to $C^0([0,L])$ and it holds
		\begin{equation}\label{unifrightleftlimits}
			f^\pm (t)=\lim\limits_{h\to 0^\pm}f(t+h),\quad\text{ uniformly in } [0,L].
		\end{equation}
	\end{lemma}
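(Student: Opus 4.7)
The strategy is to combine the classical Helly selection principle on the real line with an Arzelà--Ascoli argument, exploiting the fact that the equicontinuity modulus is uniform in $n$ and in $t$.

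First I would fix a countable dense subset $D\subset[0,L]$ and, for each $x\in D$, look at the real-valued sequence $\{f_n(\cdot,x)\}_{n\in\enne}$. By the assumed monotonicity in $t$ and the equiboundedness of $\{f_n(0)\}$ and $\{f_n(T)\}$, this is a sequence of non-decreasing functions on $[0,T]$ which is equibounded. The classical scalar Helly selection theorem yields a subsequence converging pointwise on $[0,T]$ to some non-decreasing real function. Via a standard diagonal extraction over the countable set $D$, I can produce a single subsequence (not relabelled) such that
$$f_n(t,x)\to f(t,x),\quad\text{for every }(t,x)\in[0,T]\times D,$$
where $f(\cdot,x)$ is non-decreasing on $[0,T]$ for each $x\in D$.

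Next I would upgrade this to uniform convergence in $x$. By hypothesis there is a modulus of continuity $\omega$, independent of $n$ and of $t$, such that $|f_n(t,x)-f_n(t,y)|\le\omega(|x-y|)$ for all $n$, $t$, $x$, $y$. Passing to the limit as $n\to +\infty$ on $D$ shows that $f(t,\cdot)\restriction_D$ satisfies the same modulus; hence it admits a unique continuous extension to $[0,L]$, which we still denote $f(t)\in C^0([0,L])$. The combination of equicontinuity (uniform in $n$) with pointwise convergence on the dense set $D$ then yields the uniform estimate $\|f_n(t)-f(t)\|_{C^0([0,L])}\to 0$ for every $t\in[0,T]$, and monotonicity in $t$ passes to the limit.

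For the second part on the one-sided limits, I would use that for each fixed $x\in[0,L]$ the real function $f(\cdot,x)$ is non-decreasing and bounded, hence the pointwise limits $f^\pm(t,x):=\lim_{h\to 0^\pm}f(t+h,x)$ exist. Since the uniform modulus $\omega$ is inherited by $f(s)$ for every $s\in[0,T]$, the family $\{f(t+h)\}_{h}$ is equicontinuous in $x$; together with the pointwise convergence to $f^\pm(t,\cdot)$ as $h\to 0^\pm$, Arzelà--Ascoli yields $f^\pm(t)\in C^0([0,L])$ and promotes the convergence to the uniform one stated in \eqref{unifrightleftlimits}.

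The main obstacle is the transition from pointwise convergence on the countable dense set to uniform convergence at \emph{every} single time $t\in[0,T]$, rather than merely almost every or at continuity points of a limit; this is precisely where the fact that the equicontinuity modulus is uniform in both $n$ and $t$ is essential, since it allows the same Arzelà--Ascoli argument to be run at each $t$ and at each one-sided limit without losing anything along the countable set of jumps of $f$.
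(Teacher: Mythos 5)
Your proof is correct, but it takes a genuinely different route from the paper's. The paper simply adapts Lemma~4.6 of Dal Maso--Zanini: there the selection is performed in time (diagonalisation over a countable dense set of times, with compactness of $\{f_n(t)\}_{n}$ in $C^0([0,L])$ supplied by Ascoli--Arzel\`a and monotonicity used to recover the remaining, at most countably many, exceptional times), and the uniform-in-$t$ equicontinuity is invoked only at the end to show that the limit family $\{f(t)\}_{t\in[0,T]}$ is equicontinuous, whence \eqref{unifrightleftlimits}. You instead diagonalise in space: the classical scalar Helly theorem applied to $t\mapsto f_n(t,x)$ for $x$ in a countable dense subset of $[0,L]$ (equiboundedness at all times follows from monotonicity together with the bounds at $t=0$ and $t=T$) gives convergence at \emph{every} $t$, and the modulus of continuity, being independent of $n$ and $t$, upgrades this to uniform-in-$x$ convergence at every single time through a finite $\eps$-net, with no separate treatment of jump times. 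What your approach buys is self-containedness and the absence of any further extraction at discontinuity times; what the paper's approach buys is that it transfers verbatim to settings where the value space has only weaker compactness (as in the original Dal Maso--Zanini lemma), whereas yours leans on the explicit spatial modulus. Two cosmetic remarks: for the one-sided limits it is quicker to note that a pointwise limit of functions sharing the modulus $\omega$ again satisfies $\omega$, so $f^\pm(t)\in C^0([0,L])$ at once, and the uniform convergence in \eqref{unifrightleftlimits} then follows from your $\eps$-net argument or from Dini's theorem (monotone convergence to a continuous limit), avoiding the sub-subsequence step implicit in your appeal to Arzel\`a--Ascoli along arbitrary sequences $h_k\to 0^\pm$.
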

	\begin{proof}
		The proof follows exactly the same lines of Lemma~4.6 in \cite{DMZan}; we only stress two differences. Here, the topology is the one inherited by uniform convergence and compactness is ensured by the Ascoli--Arzel\'a theorem, thanks to the equiboundedness and equicontinuity assumptions. The additional requirement of uniform equicontinuity with respect to $t\in[0,T]$ is finally used to deduce that the limit family $\{f(t)\}_{t\in [0,T]}$ is equicontinuous as well, thus yielding \eqref{unifrightleftlimits}.
	\end{proof}
	\begin{prop}\label{limits}
		Assume $E_i$ satisfies \eqref{Ecoerc}, $w_i$ satisfies \eqref{wi} and $\varphi$ satisfies ($\varphi$1), ($\varphi$2). Consider the sequences of functions $\bm{u}^n$, $\bm{\alpha}^n$, $\delta^n_h$ introduced in \eqref{interpolationa}. Then there exist a subsequence $n_j$ and for every $t\in[0,T]$ a further subsequence $n_j(t)$ (depending on time) such that:
		\begin{itemize}
			\item[(a)] $\bm{u}^{n_j(t)}(t)\rightharpoonup\bm{u}(t)$ in $[H^1(0,L)]^2$ as ${n_j(t)\to+\infty}$;
			\item[(b)] $\bm{\alpha}^{n_j(t)}(t)\rightharpoonup\bm{\alpha}(t)$ in $[H^1(0,L)]^2$ as ${n_j(t)\to+\infty}$;
			\item[(c)] $\delta_h^{n_j}(t)\to \gamma(t)$ uniformly in $[0,L]$ as ${n_j\to+\infty}$.
		\end{itemize}
		Moreover the limit functions satisfy:
		\begin{enumerate}
			\item $\bm{u}(t)\in [H^1_{0,\bar{u}(t)}(0,L)]^2$, $\bm{\alpha}(t)\in [H^1_{[0,1]}(0,L)]^2$ and $\gamma(t)\in C_{0}^{1/2}([0,L])$ for every $t\in[0,T]$;
			\item $\bm{u}(0)=\bm{u}^0$, $\bm\alpha(0)=\bm\alpha^0$ and $\gamma(0)=\delta^0$;
			\item $\alpha_i$ and $\gamma$ are non-decreasing in time;
			\item $\gamma(t)\ge \delta_h(t)=\sup\limits_{\tau\in[0,t]}|u_1(\tau)-u_2(\tau)|$ for every $t\in[0,T]$;
			\item the family $\{\gamma(t)\}_{t\in[0,T]}$ is equicontinuous.
		\end{enumerate}
	\end{prop}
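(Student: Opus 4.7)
The argument proceeds in three stages, combining a Helly-type selection for the monotone-in-time quantities with a $t$-dependent weak compactness extraction for the displacement. First I would apply Lemma~\ref{Helly} simultaneously to the sequences $\{\delta_h^n\}$ and $\{\alpha_i^n\}_{i=1,2}$: these are non-decreasing in time by construction of the minimisation algorithm, and by Proposition~\ref{unifbound} they are uniformly bounded in $H^1(0,L)$, hence $1/2$-Hölder continuous in $x$ with a constant independent of $t$ and $n$, which supplies the required equicontinuity hypothesis; the endpoint equiboundedness is immediate from the fixed initial data at $t=0$ and the Hölder bound at $t=T$. This produces a single subsequence $n_j$ and functions $\gamma,\alpha_i\colon[0,T]\to C^0([0,L])$ with $\delta_h^{n_j}(t)\to\gamma(t)$ and $\alpha_i^{n_j}(t)\to\alpha_i(t)$ uniformly in $x$ for every $t\in[0,T]$. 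The limits inherit monotonicity in $t$ from pointwise passage to the limit together with the $1/2$-Hölder bound in $x$, and $\gamma(t)$ additionally vanishes at $x=0,L$ since $\delta_h^n$ does; this covers (3), (5) and the $C_0^{1/2}$-part of (1).

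Second, I would upgrade the uniform convergence of $\alpha_i^{n_j}(t)$ to weak convergence in $H^1(0,L)$ without further extraction: the sequence is bounded in $H^1$, so any weakly convergent subsubsequence must have limit $\alpha_i(t)$ by uniqueness of the uniform limit via the compact embedding $H^1(0,L)\subset\subset C^0([0,L])$, and the subsequence principle then promotes this to weak $H^1$-convergence of the whole sequence. This yields (b) and $\alpha_i(t)\in H^1_{[0,1]}(0,L)$. For $\bm u^{n_j}(t)$, which is not monotone in $t$, I would instead invoke reflexivity and the uniform bound of Proposition~\ref{unifbound} to extract, for each $t$ separately, a further $t$-dependent subsequence $n_j(t)\subset n_j$ such that $\bm u^{n_j(t)}(t)\rightharpoonup\bm u(t)$ in $[H^1(0,L)]^2$, which is conclusion (a). The boundary identities $u_i(t,0)=0$ and $u_i(t,L)=\bar u(t)$ then pass to the limit through the compact embedding together with the uniform convergence $\bar u^n\to\bar u$, which itself follows from the absolute continuity of $\bar u$ and \eqref{finepartition}; this completes (1), while (2) is immediate because all three interpolants agree with the prescribed initial data at $t=0$.

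For (4), I would fix $t\in[0,T]$ and $x\in[0,L]$ and, for each $\tau\in[0,t]$, use the $\tau$-dependent subsequence $n_j(\tau)$ to pass to the limit in the chain
\[
|u_1^{n_j(\tau)}(\tau,x)-u_2^{n_j(\tau)}(\tau,x)|=\delta^{n_j(\tau)}(\tau,x)\le\delta_h^{n_j(\tau)}(\tau,x)\le\delta_h^{n_j(\tau)}(t,x),
\]
whose left-hand side converges to $|u_1(\tau,x)-u_2(\tau,x)|$ by uniform convergence from (a), while the right-hand side tends to $\gamma(t,x)$ since $n_j(\tau)\subset n_j$ and the Helly convergence is valid at every time. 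Taking the supremum over $\tau\in[0,t]$ then yields $\delta_h(t,x)\le\gamma(t,x)$. The main technical delicacy I expect is reconciling the two compactness mechanisms at play: Helly delivers a single $t$-independent subsequence for the monotone $\delta_h^n$ and $\alpha_i^n$, whereas the non-monotone $\bm u^n$ forces a genuinely $t$-dependent extraction, so one must verify that for $\bm\alpha^n$ the weak-$H^1$ limit along any further $t$-dependent subextraction necessarily coincides with the Helly limit, preventing any inconsistency between the two selection procedures.
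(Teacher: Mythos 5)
Your proof is correct and follows essentially the same route as the paper: Helly-type selection for the monotone history variable along a time-independent subsequence, weak $H^1$ compactness with $t$-dependent subsequences for the displacement, and for item (4) the same interplay between the fixed subsequence for $\delta_h^{n}$ and the further $\tau$-dependent subsequence for $\bm{u}^{n}$, merely phrased as a direct estimate rather than by contradiction. Your additional application of the Helly lemma to $\bm{\alpha}^{n}$, yielding a time-independent subsequence there as well, is a harmless strengthening that the paper itself remarks is available but chooses to avoid.
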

	\begin{rmk}
		We want to point out that also the subsequence of the damage variable in (b) could be chosen independent of time, since each term of the sequence is non-decreasing in time.
		This follows by means of a suitable version of Helly's selection theorem (see for instance Theorem B.5.13 in the Appendix B of \cite{MielkRoubbook}), and arguing as in
		\cite{MielkRoub}, Proposition~3.2. However, both for the sake of simplicity and since for (a) the same can not be done, we prefer to consider a time--dependent subsequence; this will be enough for our purposes.\par
		The fact that the subsequence in (c) does not depend on time is instead crucial for the validity of (4), as the reader can check from the proof.\par
	\end{rmk}
	\begin{rmk}\label{subsequencermk}
		For the sake of clarity, in order to avoid too heavy notations, from now on we prefer not to stress the occurence of the subsequence via the subscript $j$; namely we still write $n$ instead of $n_j$ and $n(t)$ instead of $n_j(t)$.
	\end{rmk}
	\begin{proof}[Proof of Proposition~\ref{limits}]
		The validity of (c) and (5), the H\"older-continuity of exponent $1/2$ of the limit function $\gamma(t)$ and the fact that $\gamma(t,0)=\gamma(t,L)=0$ are a byproduct of \eqref{bounds2} and
		Lemma~\ref{Helly}; (a) and (b) instead follow by \eqref{bounds1} together with the weak sequential compactness of the unit ball in $H^1(0,L)$.
		Since $H^1(0,L)\subset\subset C^0([0,L])$ we also deduce (1), (2) and (3).\par
		We only need to prove (4). So let us assume by contradiction that there exists a pair $(t,x)\in[0,T]\times[0,L]$ such that:
		\begin{equation}\label{contr}
			\delta_h(t,x)>\gamma(t,x)=\lim\limits_{n\to +\infty}\delta^{n}_h(t,x).
		\end{equation}
		By \eqref{contr} and the definition of $\delta_h$, there exists a time $\tau_t\in[0,t]$ for which $|u_1(\tau_t,x)-u_2(\tau_t,x)|>\gamma(t,x)$; thus we infer:
		\begin{align*}
			|u_1(\tau_t,x)-u_2(\tau_t,x)|&>\lim\limits_{n\to +\infty}\delta^{n}_h(t,x)\ge\lim\limits_{n\to +\infty}\delta^{n}_h(\tau_t,x)\\
			&\ge \limsup\limits_{n\to +\infty}|u_1^{n}(\tau_t,x)-u_2^{n}(\tau_t,x)|\\
			&\ge \lim\limits_{n(\tau_t)\to +\infty}|u_1^{n(\tau_t)}(\tau_t,x)-u_2^{n(\tau_t)}(\tau_t,x)|=|u_1(\tau_t,x)-u_2(\tau_t,x)|,
		\end{align*}
		which is a contradiction.
	\end{proof}
	
	\subsection{Existence of generalised energetic evolutions}
	The aim of this subsection is proving that the limit functions obtained in Proposition~\ref{limits} are actually a generalised energetic evolution. We only need to show that
	global stability (GS') and energy balance (EB') hold true, being the other conditions automatically satisfied due to Lemma~\ref{limits}. This first proposition deals with the
	global stability:
	\begin{prop}\label{globstab}
		Assume $E_i$ satisfies \eqref{Ecoerc}, $w_i$ satisfies \eqref{wi}, and $\varphi$ satisfies ($\varphi$1)--($\varphi$3). Assume the initial data $\bm{u}^0$, $\bm{\alpha}^0$
		fulfil the stability condition \eqref{GS0}. Then the limit functions $\bm{u}$, $\bm{\alpha}$, $\gamma$ obtained in Proposition~\ref{limits} satisfy (GS').
	\end{prop}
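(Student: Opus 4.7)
The plan is to pass to the limit $n\to+\infty$ in the discrete stability condition satisfied by $(\bm u^k,\bm\alpha^k)$ at the step $k_n$ for which $t^n_{k_n}=\tau^n(t)$, so that $\bm u^n(t)=\bm u^{k_n}$ and $\bm\alpha^n(t)=\bm\alpha^{k_n}$. Fix $t\in[0,T]$ and a limit competitor $(\widetilde{\bm u},\widetilde{\bm\alpha})$ with $\widetilde u_i(L)=\bar u(t)$ and $\alpha_i(t)\le\widetilde\alpha_i\le 1$. I first build a recovery sequence of discrete competitors by shifting the displacement linearly,
$$\widetilde u_i^n(x):=\widetilde u_i(x)+\frac{x}{L}\bigl(\bar u(\tau^n(t))-\bar u(t)\bigr),$$
which restores the boundary value $\bar u(\tau^n(t))$ required by \eqref{minalg} and, crucially, leaves the slip invariant, so that $\widetilde\delta^n\equiv\widetilde\delta$. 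For the damage I take
$$\widetilde\alpha_i^n:=\widetilde\alpha_i\vee\alpha_i^n(t),$$
which automatically satisfies the irreversibility constraint $\widetilde\alpha_i^n\ge\alpha_i^n(t)\ge\alpha_i^n(\tau^n(t)-)$ required by the algorithm and remains bounded above by $1$. Plugging these choices into \eqref{minalg} at step $k_n$ yields
\begin{align*}
	&\mc E[\bm u^n(t),\bm\alpha^n(t)]+\mc D[\bm\alpha^n(t)]+\mc K[\delta^n(t),\delta_h^n(t)]\\
	&\qquad\le\mc E[\widetilde{\bm u}^n,\widetilde{\bm\alpha}^n]+\mc D[\widetilde{\bm\alpha}^n]+\mc K[\widetilde\delta,\delta_h^n(\tau^n(t)-)\vee\widetilde\delta].
\end{align*}

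For the left--hand side I apply lower semicontinuity arguments along the (time--dependent) subsequence of Proposition \ref{limits}: $\mc E$ is weakly lsc in $[H^1(0,L)]^4$ by the same weak--convergence argument already used in Subsection \ref{subseq1}; $\mc D$ is lsc thanks to Fatou's lemma applied to the $w_i$ term together with weak $L^2$ lsc of the Dirichlet seminorm; and $\liminf_n\mc K[\delta^n(t),\delta_h^n(t)]\ge\mc K[\delta(t),\gamma(t)]$ follows by combining ($\varphi$1) with Fatou's lemma and the uniform convergences $\delta^n(t)\to\delta(t)$ and $\delta_h^n(t)\to\gamma(t)$.

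The right--hand side requires continuity rather than semicontinuity, and this is the principal obstacle. The convergence $\widetilde u_i^n\to\widetilde u_i$ is strong in $H^1$ by absolute continuity of $\bar u$ and $\tau^n(t)\to t$, and combined with the uniform convergence $\widetilde\alpha_i^n\to\widetilde\alpha_i$ (from $H^1\hookrightarrow C^0$) it ensures convergence of $\mc E[\widetilde{\bm u}^n,\widetilde{\bm\alpha}^n]$ and of the $w_i$ part of $\mc D[\widetilde{\bm\alpha}^n]$. The Dirichlet term $\int_0^L((\widetilde\alpha_i^n)')^2\d x$, however, is only weakly lsc and need not converge. I circumvent this via a density argument: test the inequality first against the strict perturbation $\widetilde\alpha_i^\eta:=(\widetilde\alpha_i+\eta)\wedge 1$ with $\eta>0$; by uniform convergence $\alpha_i^n(t)\to\alpha_i(t)\le\widetilde\alpha_i<\widetilde\alpha_i^\eta$, the construction collapses to $\widetilde\alpha_i^\eta\vee\alpha_i^n(t)=\widetilde\alpha_i^\eta$ for all $n$ large, making every RHS term trivially convergent. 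Sending $\eta\to 0^+$ then recovers the desired inequality, exploiting the strong $H^1$ convergence $\widetilde\alpha_i^\eta\to\widetilde\alpha_i$ controlled by dominated convergence and the identity $(\widetilde\alpha_i^\eta)'=\widetilde\alpha_i'\chi_{\{\widetilde\alpha_i+\eta<1\}}$ together with $\widetilde\alpha_i'=0$ a.e. on $\{\widetilde\alpha_i=1\}$. Finally, for the cohesive term I use the monotonicity from ($\varphi$3) to estimate $\mc K[\widetilde\delta,\delta_h^n(\tau^n(t)-)\vee\widetilde\delta]\le\mc K[\widetilde\delta,\delta_h^n(t)\vee\widetilde\delta]$, then pass to the limit by uniform convergence of $\delta_h^n(t)\vee\widetilde\delta$, continuity of $\varphi(y,\cdot)$ from ($\varphi$3), and dominated convergence thanks to the uniform $L^\infty$ bound of $\delta_h^n$ from Proposition \ref{unifbound} and ($\varphi$2).
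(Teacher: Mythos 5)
Your proposal is correct and follows essentially the same route as the paper's proof: weak lower semicontinuity along the time--dependent subsequence for the left--hand side, discrete minimality tested against a recovery competitor built with the same linear boundary shift, and the monotonicity/continuity in ($\varphi$3) to handle the cohesive term on the right. The only difference is bookkeeping in the damage recovery: the paper perturbs the competitor by the $n$--dependent amount $\min\{\widetilde{\alpha}_i+\max_{[0,L]}|\alpha_i^{n(t)}(t)-\alpha_i(t)|,1\}$ and passes to a single limit, whereas you use a fixed $\eta$--shift (so the competitor stabilises for large $n$) followed by a second limit $\eta\to 0^+$; both rest on the same fact that $\widetilde{\alpha}_i'=0$ a.e.\ on $\{\widetilde{\alpha}_i=1\}$, so the variation is cosmetic.
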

	\begin{proof}
		If $t=0$ there is nothing to prove, so we consider $t\in(0,T]$ and we first notice that by (4) in Proposition~\ref{limits} we know $\gamma(t)\ge\delta(t)$.
		Then we fix $\widetilde{\bm{u}}\in [H^1_{0,\bar{u}(t)}(0,L)]^2$ and $\widetilde{\bm{\alpha}}\in [H^1(0,L)]^2$ such that $\alpha_i(t)\le\widetilde{\alpha}_i\le 1$ for $i=1,2$.\par
		By weak lower semicontinuity of the energy, taking the subsequence $n(t)$ obtained in Proposition~\ref{limits} (see also Remark~\ref{subsequencermk}), we get:
		\begin{align*}
			&\quad\mathcal{E}[\bm{u}(t),\bm{\alpha}(t)]+\mathcal D[\bm{\alpha}(t)]+\mathcal{K}[\delta(t),\gamma(t)]\\
			\le& \liminf\limits_{n(t)\to+\infty}\left(\mathcal{E}[\bm{u}^{n(t)}(t),\bm{\alpha}^{n(t)}(t)]+\mathcal D[\bm{\alpha}^{n(t)}(t)]
			+\mathcal{K}[\delta^{n(t)}(t),\delta_h^{n(t)}(t)]\right)=:(\star).
		\end{align*}
		Now we can use the minimality properties of the discrete functions, considering as competitors the functions $\widehat{\bm{u}}^{n(t)}$ and $\widehat{\bm{\alpha}}^{n(t)}$
		whose components are
		\begin{equation*}
			\widehat{{u}}^{n(t)}_i(x):=\widetilde{u}_i(x)-(\bar{u}(t)-\bar{u}(\tau^{n(t)}(t))\frac{x}{L},\quad\quad \widehat{\alpha}_i^{n(t)}:=\min\left\{\widetilde{\alpha}_i
			+\max\limits_{[0,L]}\left|\alpha_i^{n(t)}(t)-\alpha_i(t)\right|,1\right\}.
		\end{equation*}
		It is easy to see that they are admissible; moreover, since $\tau^{n(t)}(t)\to t$ and $\alpha_i^{n(t)}(t)\to\alpha_i(t)$ uniformly as $n(t)\to +\infty$, they strongly
		converge to $\widetilde{\bm{u}}$ and $\widetilde{\bm\alpha}$ in $[H^1(0,L)]^2$. See also \cite{MielkRoub}, Lemma 3.5.\par
		By minimality, going back to the previous estimate, we obtain:
		\begin{align*}
			(\star)&\le\liminf\limits_{n(t)\to+\infty}\left(\mathcal{E}[\widehat{\bm{u}}^{n(t)},\widehat{\bm{\alpha}}^{n(t)}]+\mathcal D[\widehat{\bm{\alpha}}^{n(t)}]
			+\mathcal{K}[\widetilde\delta,\delta_h^{n(t)}(t)\vee\widetilde\delta]\right)\\
			&=\mathcal{E}[{\widetilde{\bm u}},{\widetilde{\bm\alpha}}]+\mathcal D[{\widetilde{\bm\alpha}}]+\mathcal{K}[\widetilde\delta,\gamma(t)\vee\widetilde\delta],
		\end{align*}
		where in the last equality we exploited the strong convergence of $\widehat{\bm{u}}^{n(t)}$ and $\widehat{\bm{\alpha}}^{n(t)}$ towards $\widetilde{\bm{u}}$ and
		$\widetilde{\bm{\alpha}}$, plus assumption ($\varphi$3). Thus we conclude.
	\end{proof}
	To show the validity of (EB') we prove separately the two inequalities. The first one follows from the discrete energy inequality presented in Lemma~\ref{discreteenergyineq}:
	\begin{prop}[\textbf{Upper Energy Estimate}]
		Assume $E_i$ satisfies \eqref{Ecoerc}, $w_i$ satisfies \eqref{wi}, and $\varphi$ satisfies ($\varphi$1), ($\varphi$2). Then for every $t\in[0,T]$ the limit functions
		$\bm{u}$, $\bm{\alpha}$, $\gamma$ obtained in Proposition~\ref{limits} satisfy the following inequality:
		\begin{align*}
			\mathcal{E}[\bm{u}(t),\bm{\alpha}(t)]+\mathcal D[\bm{\alpha}(t)]+\mathcal{K}[\delta(t),\gamma(t)]\le\mathcal{E}[\bm{u}^0,\bm{\alpha}^0]+\mathcal D[\bm{\alpha}^0]
			+\mathcal{K}[\delta^0,\delta^0]+\mathcal{W}[\bm{u},\bm{\alpha}](t).
		\end{align*}
	\end{prop}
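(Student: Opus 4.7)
The plan is to pass to the liminf in the discrete energy inequality from Lemma~\ref{discreteenergyineq} along the subsequence $n(t)$ provided by Proposition~\ref{limits}. For fixed $t\in[0,T]$, that lemma gives
\begin{equation*}
	\mathcal{E}[\bm{u}^{n(t)}(t),\bm{\alpha}^{n(t)}(t)]+\mathcal{D}[\bm{\alpha}^{n(t)}(t)]+\mathcal{K}[\delta^{n(t)}(t),\delta_h^{n(t)}(t)]\le \mathcal{E}[\bm{u}^0,\bm{\alpha}^0]+\mathcal{D}[\bm{\alpha}^0]+\mathcal{K}[\delta^0,\delta^0]+\int_0^t W^{n(t)}(\tau)\,\d\tau+R^{n(t)}.
\end{equation*}
The left-hand side is then handled by lower semicontinuity, reasoning exactly as in the existence proof for the minimization scheme \eqref{minalg}: weak $H^1$-convergence of $\bm{u}^{n(t)}(t)$ and $\bm{\alpha}^{n(t)}(t)$ obtained in (a)--(b) of Proposition~\ref{limits}, combined with the uniform convergence $\delta_h^{n(t)}(t)\to\gamma(t)$ and assumption ($\varphi$1), yield that $\mathcal{E}[\bm{u}(t),\bm{\alpha}(t)]+\mathcal{D}[\bm{\alpha}(t)]+\mathcal{K}[\delta(t),\gamma(t)]$ is a lower bound for the liminf of the left-hand side. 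The initial-data terms on the right-hand side are constants, and $R^{n(t)}\to 0$ was already established in Lemma~\ref{discreteenergyineq}.

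What remains, and is the main obstacle, is showing that the work term passes to the limit, namely
\begin{equation*}
	\lim_{n(t)\to+\infty}\int_0^t W^{n(t)}(\tau)\,\d\tau=\mathcal{W}[\bm{u},\bm{\alpha}](t).
\end{equation*}
For this I would invoke the Lebesgue dominated convergence theorem. The uniform bound \eqref{bounds1} and \eqref{Ecoerc} give $|W^n(\tau)|\le C|\dot{\bar u}(\tau)|$ uniformly in $n$, providing an $L^1(0,T)$ dominating function since $\bar u\in AC([0,T])$. To obtain a.e. pointwise convergence of $W^n(\tau)$ to $\frac{\dot{\bar u}(\tau)}{L}\sum_{i=1}^2\int_0^L E_i(\alpha_i(\tau,x))u_i'(\tau,x)\,\d x$, one fixes $\tau\in[0,t]$ and extracts from $n(t)$ a further subsequence so that $\bm{u}^{n'}(\tau)\rightharpoonup\bm{u}^*(\tau)$ weakly in $[H^1(0,L)]^2$ and $\bm{\alpha}^{n'}(\tau)\to\bm{\alpha}^*(\tau)$ uniformly on $[0,L]$ by the compact embedding $H^1\hookrightarrow C^0$; continuity of $E_i$ then upgrades this to uniform convergence of $E_i(\alpha_i^{n'}(\tau,\cdot))$, which multiplied against the weak $L^2$-convergence of $(u_i^{n'})'(\tau,\cdot)$ yields convergence of the integral as in the argument used for lower semicontinuity of $\mathcal{E}$.

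The delicate point is that the subsequential limits $\bm{u}^*(\tau),\bm{\alpha}^*(\tau)$ a priori depend on the extracted subsequence and need not coincide pointwise with the limits $\bm{u}(\tau),\bm{\alpha}(\tau)$ singled out in Proposition~\ref{limits}. To identify them, I would exploit that $(\bm{u}^{n'}(\tau),\bm{\alpha}^{n'}(\tau))$ is a discrete minimizer built from \eqref{minalg} at the time $\tau^{n'}(\tau)$, so that the limit pair satisfies a minimality condition of the type appearing in (GS'); using this minimality together with the strict convexity of the elastic energy in $\bm{u}$ (once $\bm{\alpha}$ is frozen), one identifies the value $\sum_{i=1}^2\int_0^L E_i(\alpha_i^*(\tau,x))(u_i^*)'(\tau,x)\,\d x$ independently of the chosen sub-subsequence, and so this value equals the target integrand defining $\mathcal{W}[\bm{u},\bm{\alpha}](t)$. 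Once a.e. convergence of $W^{n(t)}(\tau)$ is in hand, dominated convergence concludes the proof. All other ingredients are routine applications of the tools assembled in Proposition~\ref{unifbound}, Lemma~\ref{discreteenergyineq} and Proposition~\ref{limits}.
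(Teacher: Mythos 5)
The first part of your plan (liminf of the discrete energy inequality along $n(t)$, lower semicontinuity of $\mathcal E+\mathcal D+\mathcal K$ via ($\varphi$1), $R^{n(t)}\to 0$) matches the paper. The gap is in your treatment of the work term. You correctly isolate the delicate point — that for fixed $\tau$ a further sub-subsequence of $(\bm u^{n(t)}(\tau),\bm\alpha^{n(t)}(\tau))$ has weak limits $(\bm u^*(\tau),\bm\alpha^*(\tau))$ which need not be the functions $(\bm u(\tau),\bm\alpha(\tau))$ of Proposition~\ref{limits} — but your proposed fix does not work. In this proposition only \eqref{Ecoerc}, \eqref{wi}, ($\varphi$1), ($\varphi$2) are in force: there is no convexity of $E_i$, $w_i$ or $\varphi$, the incremental problems \eqref{minalg} and the limiting stability condition are not uniquely solvable, and strict convexity of $\mathcal E$ in $\bm u$ with $\bm\alpha$ frozen says nothing about the possible dependence of $\bm\alpha^*(\tau)$ on the sub-subsequence; different limits $\bm\alpha^*(\tau)$ give different stresses, so the value $\sum_i\int_0^L E_i(\alpha_i^*(\tau))(u_i^*)'(\tau)\,\d x$ is genuinely not determined by minimality. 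Moreover, even if some canonical value existed, the statement to be proved involves $\mathcal W[\bm u,\bm\alpha](t)$ built from the \emph{specific} limit functions of Proposition~\ref{limits}, so an identification ``independent of the sub-subsequence'' would still have to be matched to those functions — which is exactly what is missing. As a consequence your dominated-convergence step (which needs a.e.\ pointwise convergence of $W^{n(t)}(\tau)$ along the whole subsequence) is not justified.

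The paper avoids this by weakening what is asked of the work term and by building the identification into the extraction itself: one only needs $\limsup_{n(t)}\int_0^t W^{n(t)}\,\d\tau\le\mathcal W[\bm u,\bm\alpha](t)$, which follows from the reverse Fatou lemma (using $|W^n|\le C|\dot{\bar u}|$) once one knows $\limsup_n W^n(\tau)=:W(\tau)$ equals $\frac{\dot{\bar u}(\tau)}{L}\sum_i\int_0^L E_i(\alpha_i(\tau))u_i'(\tau)\,\d x$ for a.e.\ $\tau$. This last identity is obtained by refining, without loss of generality, the time-dependent sub-subsequences $n(\tau)$ in Proposition~\ref{limits} so that they \emph{also} realise $\limsup_n W^n(\tau)$; since $\bm u(\tau)$, $\bm\alpha(\tau)$ are by definition the limits along precisely these sub-subsequences, the convergence of $W^{n(\tau)}(\tau)$ to the desired integrand then follows from (a), (b) of Proposition~\ref{limits} (uniform convergence of $E_i(\alpha_i^{n(\tau)}(\tau))$ against weak $L^2$ convergence of $(u_i^{n(\tau)})'(\tau)$), with no uniqueness argument needed. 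If you replace your identification step by this ``wlog'' refinement and use reverse Fatou instead of dominated convergence, your proof closes.
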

	\begin{proof}
		We fix $t\in[0,T]$ and we again consider the subsequence $n(t)$ obtained in Proposition~\ref{limits} (see also Remark~\ref{subsequencermk}); by lower semicontinuity of the energy and Lemma~\ref{discreteenergyineq} we deduce:
		\begin{align*}
			&\quad\mathcal{E}[\bm{u}(t),\bm{\alpha}(t)]+\mathcal D[\bm{\alpha}(t)]+\mathcal{K}[\delta(t),\gamma(t)]\\
			\le& \liminf\limits_{n(t)\to+\infty}\left(\mathcal{E}[\bm{u}^{n(t)}(t),\bm{\alpha}^{n(t)}(t)]+\mathcal D[\bm{\alpha}^{n(t)}(t)]
			+\mathcal{K}[\delta^{n(t)}(t),\delta_h^{n(t)}(t)]\right)\\
			\le& \mathcal{E}[{\bm{u}^0},{\bm{\alpha}^0}]+\mathcal{D}[{\bm{\alpha}^0}]+\mathcal{K}[{\delta}^0,\delta^0]+\limsup\limits_{n(t)\to+\infty}\int_{0}^{t}W^{n(t)}(\tau)\d\tau.
		\end{align*}
		By means of the reverse Fatou's Lemma (we recall that the whole sequence $W^n$ is bounded from above by $C|\dot{\bar{u}}(\tau)|$) we thus get:
		\begin{equation*}
			\limsup\limits_{n(t)\to+\infty}\int_{0}^{t}W^{n(t)}(\tau)\d\tau\le \int_{0}^{t}\limsup\limits_{n(t)\to+\infty} W^{n(t)}(\tau) \d\tau=:(*).
		\end{equation*}
		In order to deal with $(*)$ we argue as follows (see also \cite{CagnToad}, Section 4). We consider the subsequence $n$ (independent of time) obtained in Proposition~\ref{limits} (see also Remark~\ref{subsequencermk}) and for every $\tau\in[0,T]$ we first set
		\begin{equation}\label{Wdef}
			W(\tau):=\limsup\limits_{n\to +\infty} W^{n}(\tau),
		\end{equation}
		which belongs to $L^1(0,T)$ since we recall that $|W^{n}(\tau)|\le C|\dot{\bar{u}}(\tau)|$. Without loss of generality we can assume that the time--dependent subsequences further obtained in Proposition~\ref{limits} also satisfy
		$$W(\tau)=\lim\limits_{n(\tau)\to +\infty} W^{n(\tau)}(\tau),\quad\text{ for every }\tau\in[0,T].$$
		Thus exploiting (a) and (b) in Proposition~\ref{limits} for a.e. $\tau\in[0,T]$ we obtain:
		\begin{equation}\label{Wexpr}
			\begin{aligned}
				W(\tau)&=\lim\limits_{n(\tau)\to +\infty}W^{n(\tau)}(\tau)=\lim\limits_{n(\tau)\to
					+\infty}\frac{\dot{\bar{u}}(\tau)}{L}\sum_{i=1}^{2}\int_{0}^{L}E_i(\alpha^{n(\tau)}_i(\tau,x))(u_i^{n(\tau)})'(\tau,x)\d x\\
				&=\frac{\dot{\bar{u}}(\tau)}{L}\sum_{i=1}^{2}\int_{0}^{L}E_i(\alpha_i(\tau,x))(u_i)'(\tau,x)\d x.
			\end{aligned}
		\end{equation}
		Combining \eqref{Wdef} and \eqref{Wexpr} we finally get $$(*)\le\int_{0}^{t}W(\tau)\d\tau=\mathcal{W}[\bm{u},\bm{\alpha}](t),$$
		and we conclude.
	\end{proof}
	The opposite inequality is instead a byproduct of the global stability condition we proved in Proposition~\ref{globstab}:
	\begin{prop}[\textbf{Lower Energy Estimate}]\label{LEE}
		Assume $E_i$ satisfies \eqref{Ecoerc}, $w_i$ satisfies \eqref{wi}, and $\varphi$ satisfies ($\varphi$1)--($\varphi$3).
		Assume the initial data $\bm{u}^0$, $\bm{\alpha}^0$ fulfil the stability condition \eqref{GS0}.
		Then for every $t\in[0,T]$ the limit functions $\bm{u}$, $\bm{\alpha}$, $\gamma$ obtained in Proposition~\ref{limits} satisfy:
		\begin{align*}
			\mathcal{E}[\bm{u}(t),\bm{\alpha}(t)]+\mathcal D[\bm{\alpha}(t)]+\mathcal{K}[\delta(t),\gamma(t)]\ge\mathcal{E}[\bm{u}^0,\bm{\alpha}^0]
			+\mathcal D[\bm{\alpha}^0]+\mathcal{K}[\delta^0,\delta^0]+\mathcal{W}[\bm{u},\bm{\alpha}](t).
		\end{align*}
	\end{prop}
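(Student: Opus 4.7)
The approach follows the classical strategy for proving the lower energy inequality for energetic evolutions (cf.\ \cite{CagnToad,DMZan,MielkRoub}): I exploit (GS') along a fine time partition to derive an incremental lower bound on the total energy, then pass to the limit.

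First I fix $t\in[0,T]$ and a sequence of partitions $0 = s_0^m < s_1^m < \cdots < s_{N_m}^m = t$ of $[0,t]$ with vanishing mesh. For each $i=1,\ldots,N_m$ I test (GS') at time $s_{i-1}^m$ against the competitor
\begin{equation*}
\widetilde u_j(x) := u_j(s_i^m, x) + \frac{\bar u(s_{i-1}^m) - \bar u(s_i^m)}{L}\,x, \qquad \widetilde\alpha_j := \alpha_j(s_i^m), \qquad j=1,2.
\end{equation*}
Admissibility is immediate: the boundary values give $\widetilde u_j \in H^1_{0,\bar u(s_{i-1}^m)}(0,L)$, while $\widetilde\alpha_j \ge \alpha_j(s_{i-1}^m)$ is exactly (IR'). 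Observing that $\widetilde\delta = \delta(s_i^m)$ and that $\gamma(s_{i-1}^m) \vee \delta(s_i^m) \le \gamma(s_i^m)$ by (IR') and (GS'), assumption ($\varphi$3) allows me to estimate the right--hand cohesive term in (GS') by $\mc K[\delta(s_i^m), \gamma(s_i^m)]$. Expanding $\mc E[\widetilde{\bm u},\bm\alpha(s_i^m)]$ by the quadratic structure and the definition \eqref{stresses} of the stresses, after rearrangement I obtain the incremental lower bound
\begin{equation*}
\mc E(s_i^m) + \mc D(s_i^m) + \mc K(s_i^m) - \mc E(s_{i-1}^m) - \mc D(s_{i-1}^m) - \mc K(s_{i-1}^m) \ge \frac{\bar u(s_i^m) - \bar u(s_{i-1}^m)}{L}\sum_{j=1}^{2}\int_0^L\sigma_j(s_i^m,x)\,dx - R_i^m,
\end{equation*}
with the shorthand $\mc E(s) = \mc E[\bm u(s),\bm\alpha(s)]$, analogously for $\mc D$ and $\mc K$, and with a quadratic remainder $R_i^m \le C\,(\bar u(s_i^m) - \bar u(s_{i-1}^m))^2/L$ thanks to \eqref{Ecoerc}.

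Summing over $i$, the total quadratic error $\sum_i R_i^m$ is bounded by $C\,(\max_i|\bar u(s_i^m) - \bar u(s_{i-1}^m)|)\,\|\dot{\bar u}\|_{L^1(0,T)}/L$ and hence vanishes as the mesh tends to zero, thanks to the uniform continuity and bounded variation of $\bar u\in AC([0,T])$. The main obstacle is therefore passing to the limit in the Riemann--Stieltjes sum
\begin{equation*}
S^m := \sum_{i=1}^{N_m} \frac{\bar u(s_i^m) - \bar u(s_{i-1}^m)}{L}\sum_{j=1}^{2}\int_0^L\sigma_j(s_i^m,x)\,dx,
\end{equation*}
to show $\liminf_m S^m \ge \int_0^t W(\tau)\,d\tau$. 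The integrand is uniformly bounded by Proposition~\ref{unifboundsol}, and the identification $W(\tau) = \dot{\bar u}(\tau)L^{-1}\sum_j\int_0^L\sigma_j(\tau,x)\,dx$ for a.e.\ $\tau\in[0,T]$ is available from \eqref{Wexpr}, but only along time--dependent subsequences, so right--continuity of $\tau\mapsto\sum_j\int_0^L\sigma_j(\tau,x)\,dx$ is not in force. Following the argument of \cite{CagnToad}, I would select the partition nodes to lie in a full--measure set of Lebesgue points of the integrand, apply dominated convergence (the Riemann sums being uniformly controlled by $C|\dot{\bar u}|\in L^1(0,T)$), and combine with a diagonal extraction of further subsequences in the spirit of Proposition~\ref{limits} to derive the required liminf inequality, concluding the proof.
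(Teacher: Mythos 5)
Your construction up to the Riemann--Stieltjes sum coincides with the paper's proof: the same competitors $\widetilde u_j(x)=u_j(s^m_i,x)+(\bar u(s^m_{i-1})-\bar u(s^m_i))x/L$, $\widetilde\alpha_j=\alpha_j(s^m_i)$ are tested in (GS') at time $s^m_{i-1}$, the cohesive term is handled through $\gamma(s^m_{i-1})\vee\delta(s^m_i)\le\gamma(s^m_i)$ and ($\varphi$3), and your quadratic remainder $R^m_i\le C(\bar u(s^m_i)-\bar u(s^m_{i-1}))^2/L$ is a legitimate variant of the paper's error terms and does vanish after summation. The genuine gap is in the step you yourself flag as the main obstacle. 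Writing $S^m=\int_0^t\dot{\bar u}(\tau)g^m(\tau)\d\tau$ with $g(\tau):=\frac1L\sum_j\int_0^L\sigma_j(\tau,x)\d x$ and $g^m$ its right-endpoint piecewise constant sampling, dominated convergence would require $g^m\to g$ pointwise a.e., and this does \emph{not} follow from placing the nodes at Lebesgue points of $g$: a Lebesgue point at $s^m_i$ controls averages of $g$ near $s^m_i$, not the discrepancy between the sampled value $g(s^m_i)$ and $g(\tau)$ for $\tau$ in the preceding interval, and Riemann sums of a merely measurable (here bounded, but with no temporal regularity) integrand along arbitrary partitions with Lebesgue-point nodes need not converge to the integral. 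A further ``diagonal extraction in the spirit of Proposition~\ref{limits}'' cannot repair this, since the obstruction concerns the limit functions themselves, not the discrete approximants; note also that the a.e.\ identification \eqref{Wexpr} of $W$ is unconditional (the time-dependent subsequences only enter its proof), so the issue is solely the absence of any continuity of $\tau\mapsto g(\tau)$.

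The paper resolves this by choosing the partition of $[0,t]$ \emph{before} running the incremental argument, via Lemma~4.5 of \cite{FrancMielk} applied to the $L^1$ functions $\dot{\bar u}$ and $W$: this lemma produces partitions with vanishing mesh such that $\sum_k\bigl|(s^n_k-s^n_{k-1})\dot{\bar u}(s^n_k)-\int_{s^n_{k-1}}^{s^n_k}\dot{\bar u}(\tau)\d\tau\bigr|\to0$ and $\sum_k(s^n_k-s^n_{k-1})W(s^n_k)\to\mathcal W[\bm u,\bm\alpha](t)$, together with the smallness condition $\int_{s^n_{k-1}}^{s^n_k}|\dot{\bar u}|\d\tau\le 1/n$; the sum $J_n$ is then split into three pieces controlled by exactly these three properties. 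Some partition-selection result of this type (it is also what underlies \cite{CagnToad,DMZan}) is the missing ingredient in your argument: without it, the inequality $\liminf_m S^m\ge\int_0^t W(\tau)\d\tau$ is not established, and this is precisely the heart of the lower energy estimate.
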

	\begin{proof}
		If $t=0$ the inequality is trivial, so we fix $t\in(0,T]$ and we consider a sequence of partitions of $[0,t]$ of the form $0=s^n_0<s^n_1<\dots<s^n_{n}=t$ (we stress that this sequence of partitions is completely unrelated with the one considered at the beginning of Subsection~\ref{subseq1} and used to perform the time-discretisation argument) satisfying:
		\begin{itemize}
			\item [(i)] $\displaystyle\lim\limits_{n\to +\infty}\max\limits_{k=1,\dots,n}\left|s^n_{k}-s^n_{k-1}\right|=0$;
			\item[(ii)] $\displaystyle\lim\limits_{n\to +\infty}\sum_{k=1}^{n}\left|(s^n_{k}-s^n_{k-1})\dot{\bar{u}}(s^n_{k})-\int_{s^n_{k-1}}^{s^n_{k}}\dot{\bar{u}}(\tau)\d\tau\right|=0$;
			\item[(iii)] $\displaystyle\lim\limits_{n\to +\infty}\sum_{k=1}^{n}(s^n_{k}-s^n_{k-1})W(s^n_k)=\mathcal{W}[\bm{u},\bm{\alpha}](t)$,
		\end{itemize}
		where $W$ is the function introduced in \eqref{Wdef} and \eqref{Wexpr}. The existence of such a sequence of partitions follows from Lemma 4.5 in \cite{FrancMielk},
		since both $\dot{\bar{u}}$ and $W$ belong to $L^1(0,T)$. In particular, by (i) and the absolute continuity of the integral, we can assume without loss of generality that:
		\begin{itemize}
			\item [(iv)] for every $n\in\enne$ it holds $\displaystyle\int_{s^n_{k-1}}^{s^n_{k}}|\dot{\bar{u}}(\tau)|\d\tau \le \frac 1n$ for every $k=1,\dots,n$.
		\end{itemize}
		For a given partition we fix $k=1,\dots, n$ and, recalling Proposition~\ref{globstab}, we choose as competitors for $\bm{u}(s^n_{k-1})$, $\bm{\alpha}(s^n_{k-1})$ and
		$\gamma(s^n_{k-1})$ in (GS') the functions $\widetilde{\bm{u}}$, $\widetilde{\bm{\alpha}}$, with components:
		\begin{equation*}
			\widetilde{u}_i(x)=u_i(s^n_{k},x)+(\bar u(s^n_{k-1})-\bar u(s^n_{k}))\frac{x}{L},\quad\quad\widetilde{\alpha}_i=\alpha_i(s^n_{k}),\quad\text{ for }i=1,2.
		\end{equation*}
		Recalling that $\gamma(s^n_{k-1})\vee\delta(s^n_k)\le\gamma(s^n_{k})$, and hence
		$\mc K[{\delta(s^n_k)},\gamma(s^n_{k-1})\vee\delta(s^n_k)]\le\mc K[{\delta(s^n_k)},\gamma(s^n_{k})]$ by ($\varphi$3), arguing as in the proof of
		Lemma~\ref{discreteenergyineq} we thus deduce:
		\begin{align*}
			&\mathcal{E}[\bm{u}(s^n_{k-1}),\bm{\alpha}(s^n_{k-1})]{+}\mathcal{D}[\bm{\alpha}(s^n_{k-1})]
			{+}\mathcal{K}[{\delta}(s^n_{k-1}),\gamma(s^n_{k-1})]{-}\mathcal{E}[\bm{u}(s^n_{k}),\bm{\alpha}(s^n_{k})]{-}\mathcal{D}[\bm{\alpha}(s^n_{k})]{-}\mathcal{K}[{\delta}(s^n_{k}),\gamma(s^n_{k})]\\
			&\le-\int_{s^n_{k-1}}^{s^n_k}\frac{\dot{\bar{u}}(\tau)}{L}\sum_{i=1}^{2}\int_{0}^{L}E_i(\alpha_i(s^n_k,x))\left(u_i'(s^n_k,x)
			+\frac{\bar{u}(\tau)-\bar{u}(s^n_k)}{L}\right)\d x \d\tau.
		\end{align*}
		Summing the above inequality from $k=1$ to $k=n$ we obtain:
		\begin{align*}
			&\quad\, \mathcal{E}[\bm{u}(t),\bm{\alpha}(t)]+\mathcal D[\bm{\alpha}(t)]+\mathcal{K}[\delta(t),\gamma(t)]-\mathcal{E}[\bm{u}^0,\bm{\alpha}^0]
			-\mathcal D[\bm{\alpha}^0]-\mathcal{K}[\delta^0,\delta^0]\\
			&\ge \sum_{k=1}^{n}\int_{s^n_{k-1}}^{s^n_{k}}\frac{\dot{\bar{u}}(\tau)}{L}\int_{0}^{L}\sum_{i=1}^2 E_i (\alpha_i(s^n_k,x))\left(u_i'(s^n_{k},x)+\frac{\bar u(\tau)
				-\bar u(s^n_{k})}{L}\right)\d x\d\tau=:J_n
		\end{align*}
		Now we easily notice that $J_n$ can be written as:
		\begin{align*}
			J_n&=\sum_{k=1}^{n}(s^n_k-s^n_{k-1})W(s^n_k)\\
			&+\sum_{k=1}^{n}\int_{s^n_{k-1}}^{s^n_{k}}\frac{\dot{\bar{u}}(\tau)-\dot{\bar{u}}(s^n_{k})}{L}\d\tau\int_{0}^{L}\sum_{i=1}^2 E_i (\alpha_i(s^n_k,x))u_i'(s^n_{k},x)\d x\\
			&+\sum_{k=1}^{n}\int_{s^n_{k-1}}^{s^n_{k}}\frac{\dot{\bar{u}}(\tau)}{L}\frac{\bar u(\tau)-\bar u(s^n_{k})}{L}\d\tau\int_{0}^{L}\sum_{i=1}^2 E_i (\alpha_i(s^n_k,x))\d x
			=:J^1_n+J^2_n+J^3_n.
		\end{align*}
		By (iii) we know that $\lim\limits_{n\to+\infty}J^1_n=\mathcal{W}[\bm{u},\bm{\alpha}](t)$, so we conclude if we prove that
		$\lim\limits_{n\to+\infty}J^2_n=\lim\limits_{n\to+\infty}J^3_n=0$. With this aim we estimate:
		\begin{align*}
			|J^2_n|&\le C\sum_{k=1}^{n}\left|\int_{s^n_{k-1}}^{s^n_{k}}(\dot{\bar{u}}(\tau)-\dot{\bar{u}}(s^n_{k}))\d\tau\right|\left(\sum_{i=1}^2\Vert u_i(s^n_k)\Vert_{H^1(0,L)}\right)\\
			&\le C\sum_{k=1}^{n}\left|(s^n_{k}-s^n_{k-1})\dot{\bar{u}}(s^n_{k})-\int_{s^n_{k-1}}^{s^n_{k}}\dot{\bar{u}}(\tau)\d\tau\right|,
		\end{align*}
		which goes to $0$ by (ii). As regards $J^3_n$, by using (iv) we get:
		\begin{align*}
			|J^3_n|&\le C\sum_{k=1}^{n}\int_{s^n_{k-1}}^{s^n_{k}}|\dot{\bar{u}}(\tau)||\bar{u}(\tau)-\bar{u}(s^n_k)|\d\tau
			=C\sum_{k=1}^{n}\int_{s^n_{k-1}}^{s^n_{k}}|\dot{\bar{u}}(\tau)|\left|\int_{\tau}^{s^n_{k}}\dot{\bar{u}}(s)\d s\right|\d\tau\\
			&\le C\sum_{k=1}^{n}\left(\int_{s^n_{k-1}}^{s^n_{k}}|\dot{\bar{u}}(\tau)|\d\tau\right)^2
			\le \frac{C}{n}\sum_{k=1}^{n}\int_{s^n_{k-1}}^{s^n_{k}}|\dot{\bar{u}}(\tau)|\d\tau=\frac{C}{n}\Vert\dot{\bar{u}}\Vert_{L^1(0,t)},
		\end{align*}
		and the proof is complete.
	\end{proof}
	Putting together what we obtained in this section we infer our first result of existence of generalised energetic evolutions:
	\begin{thm}[\textbf{Existence of Generalised Energetic Evolutions}]\label{exgenensol}
		Let the prescribed displacement $\bar{u}$ belong to $ AC([0,T])$ and the initial data $\bm{u}^0$, $\bm{\alpha}^0$ fulfil \eqref{compatibility} together with the stability
		condition \eqref{GS0}. Assume $E_i$ satisfies \eqref{Ecoerc}, $w_i$ satisfies \eqref{wi}, and $\varphi$ satisfies ($\varphi$1)--($\varphi$3).
		Then the triplet composed by the functions $\bm{u}$, $\bm{\alpha}$ and $\gamma$ obtained in Proposition~\ref{limits} is a generalised energetic evolution.
	\end{thm}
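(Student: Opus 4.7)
The theorem is essentially a bookkeeping result: all the hard technical work has already been carried out in the preceding propositions, and the plan is simply to verify the five conditions of Definition~\ref{Generalisedenersol} for the triple $(\bm u,\bm\alpha,\gamma)$ produced by the compactness argument in Proposition~\ref{limits}.

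The conditions (CO'), (ID') and (IR') are immediate: they correspond exactly to items (1), (2), (3) in the conclusion of Proposition~\ref{limits}, which already records the boundary values $\bm u(t)\in[H^1_{0,\bar u(t)}(0,L)]^2$, the pointwise range $0\le\alpha_i(t,\cdot)\le 1$, the H\"older regularity of $\gamma(t)$, the attainment of the initial data (including $\gamma(0)=\delta^0$), and the monotonicity in time of $\alpha_i$ and $\gamma$.

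For (GS'), the inequality $\gamma(t)\ge\delta(t)$ comes from item (4) of Proposition~\ref{limits}, since $\gamma(t,x)\ge\delta_h(t,x)\ge|u_1(t,x)-u_2(t,x)|$. The minimality inequality itself is then exactly the content of Proposition~\ref{globstab}, whose hypotheses \eqref{Ecoerc}, \eqref{wi}, ($\varphi$1)--($\varphi$3) and the initial stability \eqref{GS0} coincide with the ones we are assuming here.

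For (EB'), the two propositions proved in the preceding subsection give both inequalities. The Upper Energy Estimate uses only ($\varphi$1), ($\varphi$2) and is obtained by passing to the $\liminf$ in the discrete energy inequality of Lemma~\ref{discreteenergyineq}, combined with the reverse Fatou bound $|W^n(\tau)|\le C|\dot{\bar u}(\tau)|$ and the identification \eqref{Wexpr} of the pointwise limit $W(\tau)$ with $\frac{\dot{\bar u}(\tau)}{L}\sum_i\int_0^L \sigma_i(\tau,x)\d x$. The same expression \eqref{Wexpr}, together with that same uniform domination, shows that $\tau\mapsto\frac{\dot{\bar u}(\tau)}{L}\sum_i\int_0^L \sigma_i(\tau,x)\d x$ belongs to $L^1(0,T)$, as required by (EB'). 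The Lower Energy Estimate, Proposition~\ref{LEE}, supplies the opposite inequality via the global stability just established; this is where ($\varphi$3) enters, through the monotonicity in $z$ used to replace $\gamma(s^n_{k-1})\vee\delta(s^n_k)$ by $\gamma(s^n_k)$ in $\mathcal K$. Taken together, they yield the required equality.

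There is no genuine obstacle left: the whole proof consists of citing the above results and noticing that the two energy inequalities apply to \emph{the same} triple $(\bm u,\bm\alpha,\gamma)$, so they combine into an equality. The only subtlety worth pointing out is that the subsequence in (c) of Proposition~\ref{limits} was chosen \emph{independently of $t$}, which was exactly what was needed to secure $\gamma\ge\delta_h$ and thus to make (GS') and (EB') consistent simultaneously at every $t\in[0,T]$.
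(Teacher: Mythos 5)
Your proposal is correct and coincides with the paper's own argument: the theorem is indeed proved simply by assembling Proposition~\ref{limits} (for (CO'), (ID'), (IR') and $\gamma\ge\delta$), Proposition~\ref{globstab} (for (GS')), and the Upper and Lower Energy Estimates (for the two inequalities in (EB'), with the $L^1$ integrability of the work integrand coming from the domination $|W^n(\tau)|\le C|\dot{\bar u}(\tau)|$ and \eqref{Wexpr}). Your remark about the time-independent subsequence in (c) being the key to $\gamma\ge\delta_h$ matches the paper's own emphasis.
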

	We conclude this section by showing that, assuming in addition the specific condition ($\varphi$4), which we rewrite also here for the sake of clarity:
	\begin{itemize}
		\item[($\varphi$4)] there exist two functions  $\varphi_1,\varphi_2\colon [0,+\infty)\to[0,+\infty)$ such that $\varphi_1$ is lower semicontinuous, $\varphi_2$ is bounded,
		non-decreasing and concave, and  $\varphi(y,z)=\varphi_1(y)+\varphi_2(z)$,
	\end{itemize}
	the functions $\bm{u}$ and $\bm{\alpha}$ obtained in Proposition~\ref{limits} are actually an energetic evolution. The approach is exactly the same of \cite{CagnToad}.
	We recall that ($\varphi$4) implies ($\varphi$1), ($\varphi$2) and ($\varphi$3).\par 
	We however point out again that ($\varphi$4) does not include most of the cases of loading--unloading cohesive densities $\varphi$ usually arising and adopted in real world applications, like for instance the one presented in Remark~\ref{rmkexamplephi}. The analogous result of Theorem~\ref{exensol} for more realistic densities from the physical point of view is obtained in our main result, contained in Theorem~\ref{finalthm}, via an alternative strategy developed in the forthcoming sections.
	\begin{thm}\label{exensol}
		Let the prescribed displacement $\bar{u}$ belong to $ AC([0,T])$ and the initial data $\bm{u}^0$, $\bm{\alpha}^0$ fulfil \eqref{compatibility} together with the stability
		condition \eqref{GS0}. Assume $E_i$ satisfies \eqref{Ecoerc}, $w_i$ satisfies \eqref{wi}, and $\varphi$ satisfies ($\varphi$4). Then the pair $(\bm{u}, \bm{\alpha})$ obtained
		in Proposition~\ref{limits} is an energetic evolution.\par
		If in addition $\varphi_2$ is strictly increasing, then the function $\gamma$ obtained in Proposition~\ref{limits} coincides with the history variable $\delta_h$.
	\end{thm}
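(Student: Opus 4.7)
The plan is to derive (GS) and (EB) directly for $(\bm{u}, \bm{\alpha})$ paired with the concrete history variable $\delta_h$, exploiting the additive splitting granted by ($\varphi$4). Under that hypothesis the cohesive functional reads
\[
\mathcal{K}[\delta, z] = \int_0^L \varphi_1(\delta(x))\d x + \int_0^L \varphi_2(z(x))\d x,
\]
so the history variable enters only through the non-decreasing term $\int_0^L \varphi_2$. Since Proposition~\ref{limits}(4) gives $\gamma(t) \ge \delta_h(t)$ pointwise, one has at once $\mathcal{K}[\delta(t), \delta_h(t)] \le \mathcal{K}[\delta(t), \gamma(t)]$, which already handles one half of the energy balance.

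To pass from (GS') to (GS), I would first record the elementary pointwise inequality: for every $0 \le a \le b$ and every $c \ge 0$,
\[
\varphi_2(a \vee c) - \varphi_2(a) \ge \varphi_2(b \vee c) - \varphi_2(b),
\]
which follows from a three-case analysis ($c \le a$, $a < c \le b$, $b < c$) using only monotonicity of $\varphi_2$. Applying it pointwise with $a = \delta_h(t, x)$, $b = \gamma(t, x)$, $c = \widetilde{\delta}(x)$ and integrating yields
\[
\int_0^L \varphi_2(\gamma(t) \vee \widetilde{\delta})\d x - \int_0^L \varphi_2(\gamma(t))\d x \le \int_0^L \varphi_2(\delta_h(t) \vee \widetilde{\delta})\d x - \int_0^L \varphi_2(\delta_h(t))\d x.
\]
Adding $\int_0^L \varphi_2(\delta_h(t))\d x - \int_0^L \varphi_2(\gamma(t))\d x$ to both sides of (GS') and inserting this bound on the right-hand side, condition (GS) for the pair $(\bm{u}, \bm{\alpha})$ paired with $\delta_h$ drops out.

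For the energy balance, the $\le$ direction is immediate from the upper energy estimate proved in Section~\ref{secexistence} combined with $\mathcal{K}[\delta(t), \delta_h(t)] \le \mathcal{K}[\delta(t), \gamma(t)]$. For the $\ge$ direction I would repeat the proof of Proposition~\ref{LEE} verbatim, replacing $\gamma$ by $\delta_h$ throughout and invoking the newly established (GS) in place of (GS'). The only step that needs reinspection is the bound $\mathcal{K}[\delta(s^n_k), \gamma(s^n_{k-1}) \vee \delta(s^n_k)] \le \mathcal{K}[\delta(s^n_k), \gamma(s^n_k)]$, which originally relied on the monotonicity part of ($\varphi$3); with $\delta_h$ in place of $\gamma$ it becomes the exact identity $\delta_h(s^n_{k-1}) \vee \delta(s^n_k) = \delta_h(s^n_k)$ built into \eqref{historicalslip}, so it holds trivially. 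The remainder of the argument (telescoping together with the control of the error terms $J_n^1, J_n^2, J_n^3$) carries over unchanged, yielding (EB) and hence the first claim.

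Finally, subtracting (EB) from (EB') forces $\int_0^L \varphi_2(\gamma(t))\d x = \int_0^L \varphi_2(\delta_h(t))\d x$ for every $t \in [0, T]$. If $\varphi_2$ is strictly increasing, this, combined with the pointwise bound $\gamma(t, x) \ge \delta_h(t, x)$, implies $\gamma(t, x) = \delta_h(t, x)$ for a.e.\ $x \in [0, L]$; continuity of both sides (Proposition~\ref{unifboundsol} gives H\"older regularity of $\delta_h(t)$, while $\gamma(t) \in C^{1/2}_0([0, L])$ by Proposition~\ref{limits}) then upgrades the identity to every $x$. The conceptually delicate step is the passage from (GS') to (GS): replacing $\gamma$ by $\delta_h$ simultaneously decreases the left-hand side and, through the competitor term $\gamma \vee \widetilde{\delta}$, modifies the right-hand side, and the pointwise monotonicity inequality above is exactly the algebraic statement that makes these two effects balance. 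This single step is where the additive structure of ($\varphi$4) is essential; everything else reduces to already-proven pieces of Section~\ref{secexistence}.
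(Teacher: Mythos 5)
Your proposal is correct and follows essentially the same route as the paper: reduce (GS) to a pointwise comparison of $\varphi_2$--increments under the operation $\vee\,\widetilde\delta$, obtain (EB) by combining the upper energy estimate with a rerun of Proposition~\ref{LEE} in which $\gamma$ is replaced by $\delta_h$ (legitimate once (GS) is in hand), and deduce $\gamma=\delta_h$ from strict monotonicity of $\varphi_2$ plus continuity of both functions. The one genuine (small) difference is your proof of the key pointwise inequality $\varphi_2(a\vee c)-\varphi_2(a)\ge\varphi_2(b\vee c)-\varphi_2(b)$ for $a\le b$: your three-case argument uses only the monotonicity of $\varphi_2$, whereas the paper writes $b\vee c=b+[c-b]^+$ and invokes both monotonicity and concavity; your observation shows the concavity in ($\varphi$4) is not actually needed for this step, which is a mild simplification. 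One minor slip: in the rerun of Proposition~\ref{LEE} the relevant step is not the ``exact identity'' $\delta_h(s^n_{k-1})\vee\delta(s^n_k)=\delta_h(s^n_k)$ --- the supremum defining $\delta_h(s^n_k)$ runs over the whole interval $[0,s^n_k]$, so in general one only has $\delta_h(s^n_{k-1})\vee\delta(s^n_k)\le\delta_h(s^n_k)$ --- but this inequality, combined with the monotonicity of $\varphi(y,\cdot)$ (i.e.\ ($\varphi$3), implied by ($\varphi$4)), is exactly what the original argument uses, so the step goes through unchanged.
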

	\begin{proof}
		Thanks to Theorem~\ref{exgenensol} we only need to show the validity of (GS) and (EB) in Definition~\ref{Enersol}. We first focus on (GS); so we fix $t\in[0,T]$ and two
		functions ${\widetilde{\bm u}}\in [H^1_{0,\bar{u}(t)}(0,L)]^2$, ${\widetilde{\bm\alpha}}\in [H^1(0,L)]^2$ such that ${\alpha}_i(t)\le{\widetilde{\alpha}_i}\le {1}$ in $[0,L]$
		for $i=1,2$. Since the triplet $(\bm{u}, \bm{\alpha},\gamma)$ satisfies (GS') we know that:
		\begin{equation*}
			\mathcal{E}[\bm{u}(t),\bm{\alpha}(t)]+\mathcal D[\bm{\alpha}(t)]\le\mathcal{E}[{\widetilde{\bm u}},{\widetilde{\bm\alpha}}]+\mathcal D[{\widetilde{\bm\alpha}}]
			+\mathcal{K}[\widetilde\delta,\gamma(t)\vee\widetilde\delta]-\mathcal{K}[\delta(t),\gamma(t)],
		\end{equation*}
		thus we conclude if we prove
		\begin{equation}\label{claim}
			\mathcal{K}[\widetilde\delta,\gamma(t)\vee\widetilde\delta]-\mathcal{K}[\delta(t),\gamma(t)]
			\le \mathcal{K}[\widetilde\delta,\delta_h(t)\vee\widetilde\delta]-\mathcal{K}[\delta(t),\delta_h(t)].
		\end{equation}
		With this aim, exploiting ($\varphi$4), in particular the monotonicity and concavity of $\varphi_2$, and  recalling that $\gamma(t)\ge \delta_h(t)$, we get:
		\begin{align*}
			\varphi_2(\gamma(t)\vee\widetilde{\delta})&=\varphi_2(\gamma(t)+[\widetilde{\delta}-\gamma(t)]^+)\le \varphi_2(\gamma(t)+[\widetilde{\delta}-\delta_h(t)]^+)\\
			&\le \varphi_2(\gamma(t))+\varphi_2(\delta_h(t)+[\widetilde{\delta}-\delta_h(t)]^+)-\varphi_2(\delta_h(t))\\
			&=\varphi_2(\gamma(t))+\varphi_2(\delta_h(t)\vee\widetilde{\delta})-\varphi_2(\delta_h(t)).
		\end{align*}
		The above inequality implies:
		\begin{align*}
			\mathcal{K}[\widetilde\delta,\gamma(t)\vee\widetilde\delta]-\mathcal{K}[\widetilde\delta,\delta_h(t)\vee\widetilde{\delta}]
			&=\int_{0}^{L}\Big(\varphi_2(\gamma(t,x)\vee\widetilde\delta(x))-\varphi_2(\delta_h(t,x)\vee\widetilde\delta(x))\Big)\d x\\
			&\le \int_{0}^{L}\Big(\varphi_2(\gamma(t,x))-\varphi_2(\delta_h(t,x))\Big)\d x\\
			&=\mathcal{K}[\delta(t),\gamma(t)]-\mathcal{K}[\delta(t),\delta_h(t)],
		\end{align*}
		which is equivalent to \eqref{claim}.\par
		We now prove (EB). Since the triplet $(\bm{u}, \bm{\alpha},\gamma)$ satisfies (EB'), it is enough to prove
		\begin{equation}\label{claim2}
			\mc K[\delta(t),\gamma(t)]=\mc K[\delta(t),\delta_h(t)],\quad\text{ for every }t\in[0,T].
		\end{equation}
		Since $\gamma(t)\ge \delta_h(t)$ we easily deduce $\mc K[\delta(t),\gamma(t)]\ge\mc K[\delta(t),\delta_h(t)]$. To get the other inequality we first observe
		that arguing exactly as in the proof of Proposition~\ref{LEE}, but replacing $\gamma$ with $\delta_h$ (indeed we have just proved (GS)) we get:
		\begin{align*}
			\mathcal{E}[\bm{u}(t),\bm{\alpha}(t)]+\mathcal D[\bm{\alpha}(t)]+\mathcal{K}[\delta(t),\delta_h(t)]\ge\mathcal{E}[\bm{u}^0,\bm{\alpha}^0]
			+\mathcal D[\bm{\alpha}^0]+\mathcal{K}[\delta^0,\delta^0]+\mathcal{W}[\bm{u},\bm{\alpha}](t).
		\end{align*}
		Combining the above inequality with (EB') we finally obtain
		\begin{equation*}
			\mc K[\delta(t),\delta_h(t)]\ge\mc K[\delta(t),\gamma(t)],
		\end{equation*}
		hence \eqref{claim2} holds true.\par
		If in addition $\varphi_2$ is strictly increasing, then \eqref{claim2} implies $\gamma(t)=\delta_h(t)$ since both functions are continuous in $[0,L]$. Thus we conclude.
	\end{proof}
	
	\section{PDE Form of Energetic Evolutions}\label{seceulerlagrange}
	In this section we compute the Euler--Lagrange equations coming from the global stability condition (GS'). More precisely we prove that any generalised energetic
	evolution $(\bm{u}, \bm{\alpha},\gamma)$ must satisfy, in a suitable weak formulation, the following system of equilibrium equations governing the stresses $\sigma_i$
	(see Proposition~\ref{Propelu}): 
	\begin{subequations}
		\begin{equation}\label{systemstresses}
			\begin{cases} 
				-\sigma_1(t)'+\partial_y\varphi(\delta(t),\gamma(t))\sgn(u_1(t)-u_2(t))=0,&\text{ in }[0,L],\\
				-\sigma_2(t)'-\partial_y\varphi(\delta(t),\gamma(t))\sgn(u_1(t)-u_2(t))=0,& \text{ in }[0,L],
			\end{cases}
			\qquad\text{ for every }t\in [0,T],
		\end{equation}
		where $\sgn(\cdot)$ denotes the signum function, together with a Karush--Kuhn--Tucker condition describing the evolution of the damage variables (if regular in time, see Propositions~\ref{Propelalpha} and \ref{propimportant}):
		\begin{equation}\label{KKT}
			\begin{cases}
				\dot{\alpha}_i(t)\ge 0,&\text{ in }[0,L],\\
				-\alpha_i(t)''+\frac 12 E_i'(\alpha_i(t))(u_i(t)')^2+w_i'(\alpha_i(t))\ge 0,&\text{ in }[0,L],\\
				\Big[-\alpha_i(t)''+\frac 12 E_i'(\alpha_i(t))(u_i(t)')^2+w_i'(\alpha_i(t))\Big]\dot{\alpha}_i(t)= 0,&\text{ in }[0,L],
			\end{cases}
			\qquad\text{ for a.e. }t\in [0,T].
		\end{equation}
	\end{subequations}
	The results of this section will be crucial for the achievement of our goal, namely the equivalence between the fictitious history variable $\gamma$ and the concrete one
	$\delta_h$, under meaningful assumptions on $\varphi$. The argument based on temporal regularity of generalised energetic evolutions will be developed in
	Section~\ref{sectemporalregularity}.\par
	We recall that, given the loading--unloading density $\varphi\colon \mc T\to [0,+\infty)$, we denote by $\psi$ its restriction to the diagonal, namely $\psi(z)=\varphi(z,z)$,
	for $z\in [0,+\infty)$. Throughout the section the main assumptions on $\varphi$ (and $\psi$) are:
	\begin{subequations}\label{secondassumptions}
		\begin{equation}\label{2a}
			\text{the function }\psi \text{ belongs to }C^1([0,+\infty));
		\end{equation}
		\begin{equation}\label{2b}
			\text{for every }z\in(0,+\infty)\text{ the map }\varphi(\cdot,z)\text{ belongs to }C^1([0,z]);
		\end{equation}
		\begin{equation}\label{2c}
			\text{ for every }z\in(0,+\infty)\text{ there holds }\partial_y\varphi(z,z)=\psi'(z)\text{ and }\partial_y\varphi(0,z)=0;
		\end{equation}
		\begin{equation}\label{2d}
			\text{ the partial derivative }\partial_y\varphi\text{ belongs to } C^0(\mc T\setminus(0,0)) \text{ and it is bounded in }\mc T.
		\end{equation}
	\end{subequations}
	We notice that the above conditions are slightly more general than properties ($\varphi$5)--($\varphi$8) listed in Section~\ref{secsetting}, since we do not require
	any convexity assumption (which will be instead employed in Section~\ref{sectemporalregularity}).\par
	We start the analysis with a simple but useful lemma.
	\begin{lemma}\label{Lemmader}
		Let $f,g\in \erre$ such that $f\ge|g|$ and assume the function $\varphi\colon \mc T\to [0,+\infty)$ satisfies: 
		\begin{subequations}
			\begin{equation}
				\text{the function }z\mapsto \varphi(z,z)=:\psi(z) \text{ is differentiable in }[0,+\infty);
			\end{equation}
			\begin{equation}
				\text{for every }z\in(0,+\infty)\text{ the map }\varphi(\cdot,z)\text{ is differentiable in }[0,z];
			\end{equation}
			\begin{equation}
				\text{for every }z\in(0,+\infty)\text{ there holds }\partial_y\varphi(z,z)=\psi'(z).
			\end{equation}
		\end{subequations}
		Then for every $v\in \erre$ one has:
		\begin{equation*}
			\lim\limits_{h\to 0^+}\frac{\varphi(|g+hv|,f\vee|g+hv|)-\varphi(|g|,f)}{h}=\begin{cases}
				\partial_y\varphi(|g|,f)\sgn(g)v, &\text{ if }f>|g|>0,\\
				\psi'(|g|)\sgn (g)v, &\text{ if }f=|g|>0,\\
				\partial_y\varphi(0,f)|v|, &\text{ if }f>|g|=0,\\
				\psi'(0)|v|, & \text{ if }f=|g|=0.
			\end{cases}
		\end{equation*}
	\end{lemma}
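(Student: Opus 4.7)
The plan is to split into the four cases indicated in the statement and, in each, to first analyse the behaviour of the scalar quantity $|g+hv|$ as $h\to 0^+$, then recognise which of the hypotheses (differentiability of $\varphi(\cdot,z)$, differentiability of $\psi$, or the matching condition $\partial_y\varphi(z,z)=\psi'(z)$) governs the increment.

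\medskip

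First I would dispose of the two cases with $|g|>0$. When $g\neq 0$ the map $h\mapsto |g+hv|$ is differentiable at $h=0$ with right derivative $\sgn(g)v$, so $|g+hv|=|g|+\sgn(g)vh+o(h)$. In case $f>|g|>0$, continuity of $h\mapsto|g+hv|$ ensures that $|g+hv|<f$ for all sufficiently small $h>0$, whence $f\vee|g+hv|=f$ and the incremental ratio reduces to
\begin{equation*}
\frac{\varphi(|g+hv|,f)-\varphi(|g|,f)}{h},
\end{equation*}
which tends to $\partial_y\varphi(|g|,f)\sgn(g)v$ by differentiability of $\varphi(\cdot,f)$ at the interior point $|g|\in(0,f)$. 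In case $f=|g|>0$ I would distinguish three sub-cases according to the sign of $\sgn(g)v$: if $\sgn(g)v>0$ then $|g+hv|>|g|=f$ for small $h>0$ so $f\vee|g+hv|=|g+hv|$ and the ratio equals $(\psi(|g+hv|)-\psi(|g|))/h\to\psi'(|g|)\sgn(g)v$; if $\sgn(g)v<0$ then $f\vee|g+hv|=f=|g|$ and the ratio tends to $\partial_y\varphi(|g|,|g|)\sgn(g)v$, which equals $\psi'(|g|)\sgn(g)v$ thanks to the compatibility condition $\partial_y\varphi(z,z)=\psi'(z)$; the remaining sub-case $v=0$ (or $\sgn(g)v=0$) is trivial. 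This compatibility hypothesis is precisely what makes the two one-sided behaviours agree.

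\medskip

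The two cases with $g=0$ are simpler because then $|g+hv|=h|v|$, so the sign of $g$ disappears and $|v|$ replaces $\sgn(g)v$. In case $f>|g|=0$ the quantity $h|v|$ stays below $f$ for small $h$, so $f\vee|g+hv|=f$ and the ratio is $(\varphi(h|v|,f)-\varphi(0,f))/h\to\partial_y\varphi(0,f)|v|$ by differentiability of $\varphi(\cdot,f)$ at the left endpoint $y=0$. In the remaining degenerate case $f=|g|=0$ one has $f\vee|g+hv|=h|v|$, so the ratio reduces to $(\psi(h|v|)-\psi(0))/h\to\psi'(0)|v|$.

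\medskip

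No step is genuinely hard; the only point that needs care, and which I consider the main (mild) obstacle, is the matching of the two one-sided behaviours in case $f=|g|>0$. Indeed, depending on the sign of $\sgn(g)v$ the increment lives either along the diagonal $\{(z,z)\}$ (governed by $\psi'$) or strictly below it (governed by $\partial_y\varphi$), and the identification of the two limits requires exactly the hypothesis $\partial_y\varphi(z,z)=\psi'(z)$. Once this is noted the lemma follows by an elementary combination of the right-differentiability of $h\mapsto|g+hv|$ with the assumed one-variable differentiability of $\varphi$.
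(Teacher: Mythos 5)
Your proof is correct and follows essentially the same route as the paper's: a direct case analysis reducing the incremental quotient to one--variable difference quotients for $\varphi(\cdot,f)$ or $\psi$, with the matching condition $\partial_y\varphi(z,z)=\psi'(z)$ reconciling the two one--sided behaviours when $f=|g|>0$. Your grouping of subcases by the sign of $\sgn(g)v$ (rather than by the signs of $g$ and $v$ separately, as in the paper) is only a minor streamlining of the same argument.
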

	\begin{proof}
		We denote by $I$ the limit we want to compute and we distinguish among all the different cases. We first assume that $f>|g|$, so we get:
		\begin{itemize}
			\item if $g=0$, then $\displaystyle I=\lim\limits_{h\to 0^+}\frac{\varphi(h|v|,f)-\varphi(0,f)}{h}=\partial_y\varphi(0,f)|v|$;
			\item if $g>0$, then $\displaystyle I=\lim\limits_{h\to 0^+}\frac{\varphi(g+hv,f)-\varphi(g,f)}{h}=\partial_y\varphi(g,f)v=\partial_y\varphi(|g|,f)\sgn(g)v$;
			\item if $g<0$, then $\displaystyle I=\lim\limits_{h\to 0^+}\frac{\varphi(|g|-hv,f)-\varphi(|g|,f)}{h}=-\partial_y\varphi(|g|,f)v=\partial_y\varphi(|g|,f)\sgn(g)v$.
		\end{itemize}
		If instead $f=|g|$ we have:
		\begin{itemize}
			\item if $g=0$, then $\displaystyle I=\lim\limits_{h\to 0^+}\frac{\varphi(h|v|,h|v|)-\varphi(0,0)}{h}=\lim\limits_{h\to 0^+}\frac{\psi(h|v|)-\psi(0)}{h}=\psi'(0)|v|$;
			\item if $g>0$ and $v\ge 0$, then $\displaystyle I=\lim\limits_{h\to 0^+}\frac{\varphi(g+hv,g+hv)-\varphi(g,g)}{h}=\psi'(g)v=\psi'(|g|)\sgn(g)v$;
			\item if $g>0$ and $v<0$, then $\displaystyle I=\lim\limits_{h\to 0^+}\frac{\varphi(g+hv,g)-\varphi(g,g)}{h}=\partial_y\varphi(g,g)v=\psi'(g)v=\psi'(|g|)\sgn(g)v$;
			\item if $g<0$ and $v\ge 0$, then $\displaystyle I=\lim\limits_{h\to 0^+}\frac{\varphi(|g|-hv,|g|)-\varphi(|g|,|g|)}{h}=-\partial_y\varphi(|g|,|g|)v=\psi'(|g|)\sgn(g)v$;
			\item if $g<0$ and $v< 0$, then $\displaystyle I=\lim\limits_{h\to 0^+}\frac{\psi(|g|-hv)-\psi(|g|)}{h}=-\psi'(|g|)v=\psi'(|g|)\sgn(g)v$.
		\end{itemize}
		So we conclude.
	\end{proof}
	As an immediate corollary we deduce:
	\begin{cor}\label{passageint}
		Let $f,g$ be two measurable functions such that $f\in L^\infty(0,L)$ and $f\ge|g|$ a.e. in $[0,L]$, and assume $\varphi$ satisfies \eqref{2a}, \eqref{2b} and \eqref{2c}.
		Then for every $v\in L^\infty(0,L)$ it holds:
		\begin{equation*}
			\lim\limits_{h\to 0^+}\frac{\mc K[|g+hv|,f\vee|g+hv|]{-}\mc K[|g|,f]}{h}=\!\!\int_{\{|g|>0\}}\!\!\!\!\!\!\!\!\!\!\!\!\!\!\!\partial_y\varphi(|g(x)|,f(x))\sgn(g(x))v(x)\d x
			\,+\,\psi'(0)\!\!\int_{\{f=0\}}\!\!\!\!\!\!\!\!\!\!\!\!\!|v(x)|\d x.
		\end{equation*}
	\end{cor}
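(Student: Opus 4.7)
The plan is to reduce the computation of the limit to a pointwise application of Lemma~\ref{Lemmader} and then to pass to the limit under the integral sign via dominated convergence. Write
$$F_h(x):=\frac{\varphi(|g(x)+hv(x)|,f(x)\vee|g(x)+hv(x)|)-\varphi(|g(x)|,f(x))}{h}$$
for the pointwise difference quotient inside $\mc K$. Since $f\in L^\infty(0,L)$ and $|g|\le f$ a.e., we also have $g\in L^\infty(0,L)$, so together with $v\in L^\infty(0,L)$ all quantities $|g+hv|$ and $f\vee|g+hv|$ remain in a compact subset of $\mc T$ for, say, $h\in[0,1]$. The hypotheses $f(x)\ge|g(x)|$ hold a.e., so Lemma~\ref{Lemmader} applies pointwise and yields the a.e.\ limit of $F_h(x)$.

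Next, I would collapse the four cases of Lemma~\ref{Lemmader} into the two regions appearing in the statement, by invoking \eqref{2c}. On $\{|g|>0\}$, both subcases $f>|g|>0$ and $f=|g|>0$ produce $\partial_y\varphi(|g|,f)\sgn(g)v$, thanks to the identity $\psi'(|g|)=\partial_y\varphi(|g|,|g|)$ from \eqref{2c}. On $\{|g|=0\}$, the subcase $f>0$ gives $\partial_y\varphi(0,f)|v|$, which vanishes by the second identity in \eqref{2c}, while the subcase $f=0$ gives $\psi'(0)|v|$. Summing over the partition $[0,L]=\{|g|>0\}\cup\{|g|=0,\,f>0\}\cup\{|g|=0,\,f=0\}$ produces exactly the right-hand side of the claim.

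The last step is passing the limit inside the integral. For this it suffices to exhibit a uniform integrable bound on $|F_h|$ for $h\in(0,1]$. I would split $F_h(x)$ according to whether $|g(x)+hv(x)|\le f(x)$ or $|g(x)+hv(x)|>f(x)$. In the first region $f\vee|g+hv|=f$, so by \eqref{2b} and the mean value theorem
$$F_h(x)=\partial_y\varphi(\xi_h(x),f(x))\,\frac{|g(x)+hv(x)|-|g(x)|}{h},$$
for some $\xi_h(x)$ between $|g(x)|$ and $|g(x)+hv(x)|$, and the last factor is bounded by $|v(x)|$. In the second region, $|g+hv|\ge f\ge|g|$, and a telescoping through $\varphi(|g|,|g+hv|)$ together with \eqref{2a} and \eqref{2b} reduces the estimate to the Lipschitz character of $\psi$ on a bounded interval and the same mean value argument for the first variable. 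Using the boundedness of $\partial_y\varphi$ on the compact region of interest (which is a consequence of the continuity stated, for instance, in \eqref{2d}, guaranteed in this section), we obtain $|F_h(x)|\le C\|v\|_{L^\infty(0,L)}$ uniformly in $h$, and dominated convergence closes the argument.

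The main obstacle is precisely the uniform control of $F_h$ near the diagonal $\{f=|g|\}$, where the second argument of $\varphi$ may switch from $f$ to $|g+hv|$; the telescoping through $\varphi(|g|,|g+hv|)$ together with $\psi\in C^1$ is, in my view, the cleanest way to isolate this transition and match it with the pointwise formula given by Lemma~\ref{Lemmader}.
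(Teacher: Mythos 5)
Your proposal is correct and follows essentially the same route as the paper: the paper likewise rewrites the limit as the integral of the pointwise difference quotients, passes to the limit inside the integral, and concludes from Lemma~\ref{Lemmader} together with \eqref{2c}. The only substantive difference is that you justify the exchange of limit and integral by an explicit dominated-convergence bound invoking the boundedness of $\partial_y\varphi$ from \eqref{2d} (a standing assumption of the section, though not listed among the corollary's hypotheses), whereas the paper attributes this step, without detail, to \eqref{2a} and \eqref{2b} alone.
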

	\begin{proof}
		We notice that, by the explicit expression of $\mc K$ given by \eqref{Cohesiveenergy}, the limit we want to compute can be written as
		\begin{equation*}
			\lim\limits_{h\to 0^+}\int_{0}^{L}\frac{\varphi(|g(x)+hv(x)|,f(x)\vee|g(x)+hv(x)|)-\varphi(|g(x)|,f(x))}{h}\d x.
		\end{equation*}
		Assumptions \eqref{2a} and \eqref{2b} allow us to pass to the limit inside the integral, thus we conclude by means of Lemma~\ref{Lemmader} and exploiting \eqref{2c}.
	\end{proof}
	We are now in a position to state and prove the first result of this section, namely a weak form of the Euler--Lagrange equation for the displacement $\bm{u}$, or better
	for the stress $\bm{\sigma}$.
	\begin{prop}\label{Propelu}
		Let $E_i\in C^0([0,1])$ and assume $\varphi$ satisfies \eqref{2a}, \eqref{2b} and \eqref{2c}.
		Let $(\bm{u},\bm{\alpha},\gamma)$ satisfy (CO') and (GS') of
		Definition~\ref{Generalisedenersol}. 
		Then for every $t\in [0,T]$ and for every $\bm{v}\in[H^1_0(0,L)]^2$ it holds:
		\begin{equation}\label{ELu}
			\left |\int_{0}^{L}\sum_{i=1}^{2}\sigma_i(t)v_i'\d x+\int_{\{\delta(t)>0\}}\!\!\!\!\!\!\!\!\!\!\!\big[\partial_y\varphi(\delta(t),\gamma(t))\sgn(u_1(t){-}u_2(t))\big](v_1-v_2)\d x\right |
			\le \psi'(0) \int_{\{\gamma(t)=0\}}\!\!\!\!\!\!\!\!\!\!\!\!|v_1-v_2|\d x,
		\end{equation}
		where the stresses $\sigma_i$ have been introduced in \eqref{stresses}.\\
		In particular, for every $t\in[0,T]$ the sum of the stresses $\displaystyle\sum_{i=1}^{2}\sigma_i(t)$ is constant in $[0,L]$.
	\end{prop}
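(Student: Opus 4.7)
The plan is to derive \eqref{ELu} as a first-order condition arising from the global stability (GS') by perturbing only the displacement. Concretely, I would fix $t\in[0,T]$ and $\bm v\in[H^1_0(0,L)]^2$, and for $h>0$ test (GS') with
$$\widetilde{\bm u}=\bm u(t)+h\bm v,\qquad \widetilde{\bm\alpha}=\bm\alpha(t),$$
which are admissible since $\bm v$ vanishes at $0$ and $L$ (so the boundary condition is preserved) and the damage constraint is trivially satisfied. Setting $\widetilde\delta=|(u_1(t)-u_2(t))+h(v_1-v_2)|$, (GS') rearranges as
$$\frac{\mathcal E[\bm u(t)+h\bm v,\bm\alpha(t)]-\mathcal E[\bm u(t),\bm\alpha(t)]}{h}+\frac{\mathcal K[\widetilde\delta,\gamma(t)\vee\widetilde\delta]-\mathcal K[\delta(t),\gamma(t)]}{h}\ge 0.$$

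Next I would pass to the limit $h\to 0^+$. Since $\bm\alpha(t)$ is frozen, expanding the quadratic $E_i(\alpha_i(t))(u_i'(t)+hv_i')^2$ shows that the elastic difference quotient converges to $\int_0^L\sum_{i=1}^2\sigma_i(t)v_i'\d x$ by \eqref{stresses}. For the cohesive term I would apply Corollary~\ref{passageint} with $g=u_1(t)-u_2(t)$, $f=\gamma(t)$ (the compatibility $f\ge|g|$ is guaranteed by (GS') which gives $\gamma(t)\ge\delta(t)$) and $v=v_1-v_2$; its hypotheses are exactly \eqref{2a}--\eqref{2c}, and by \eqref{2c} the contribution on the set $\{\delta(t)=0,\,\gamma(t)>0\}$ vanishes since $\partial_y\varphi(0,\gamma(t))=0$. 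The resulting one--sided inequality reads
$$0\le A[\bm v]+B[\bm v],$$
where $A[\bm v]$ is the bracketed expression inside the absolute value in \eqref{ELu} and $B[\bm v]:=\psi'(0)\int_{\{\gamma(t)=0\}}|v_1-v_2|\d x$.

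To recover the absolute value in \eqref{ELu}, I would repeat the argument with $-\bm v$ in place of $\bm v$. The map $A$ is linear in $\bm v$ while $B$ only depends on $|v_1-v_2|$, hence the same passage to the limit yields $0\le -A[\bm v]+B[\bm v]$; combining the two estimates gives $|A[\bm v]|\le B[\bm v]$, which is precisely \eqref{ELu}.

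For the ``in particular'' assertion, I would specialise to $\bm v=(v,v)$ with $v\in H^1_0(0,L)$: then $v_1-v_2\equiv 0$, so both the cohesive term in $A[\bm v]$ and $B[\bm v]$ vanish, and \eqref{ELu} reduces to $\int_0^L(\sigma_1(t)+\sigma_2(t))v'\d x=0$ for every such $v$. Thus $\sigma_1(t)+\sigma_2(t)$ has zero distributional derivative on $(0,L)$, hence is constant. I do not anticipate a serious obstacle: the only subtle step is exchanging limit and integral in the cohesive difference quotient, and this is already encapsulated in Corollary~\ref{passageint}, whose proof relies precisely on the regularity assumptions \eqref{2a}--\eqref{2c}.
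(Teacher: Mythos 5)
Your proposal is correct and follows essentially the same route as the paper: perturb (GS') with $\widetilde{\bm u}=\bm u(t)+h\bm v$, $\widetilde{\bm\alpha}=\bm\alpha(t)$, pass to the limit in the difference quotient using Corollary~\ref{passageint} (with $g=u_1(t)-u_2(t)$, $f=\gamma(t)$), repeat with $-\bm v$ to obtain the two-sided bound, and take $v_1=v_2$ to get that $\sigma_1(t)+\sigma_2(t)$ is constant. The only cosmetic difference is that your observation about the set $\{\delta(t)=0,\ \gamma(t)>0\}$ is already built into the statement of the corollary via \eqref{2c}.
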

	\begin{proof}
		We fix $t\in[0,T]$ and by choosing $\widetilde{\bm{\alpha}}=\bm{\alpha}(t)$ in (GS') we get for every $h>0$ and $\bm{v}\in [H^1_0(0,L)]^2$:
		\begin{align*}
			&\quad\,\,\mc E[\bm{u}(t),\bm{\alpha}(t)]+\mc K[\delta(t),\gamma(t)]\\
			&\le\mc E[\bm{u}(t)+h\bm{v},\bm{\alpha}(t)]
			+\mc K[|u_1(t)-u_2(t)+h(v_1-v_2)|,\gamma(t)\vee|u_1(t)-u_2(t)+h(v_1-v_2)|].
		\end{align*}
		Letting $h\to 0^+$ we thus deduce
		\begin{align*}
			0&\le \lim\limits_{h\to 0^+}\frac {\mc E[\bm{u}(t)+h\bm{v},\bm{\alpha}(t)]-\mc E[\bm{u}(t),\bm{\alpha}(t)]}{h}\\
			&\quad+\lim\limits_{h\to 0^+}\frac{\mc K[|u_1(t)-u_2(t)+h(v_1-v_2)|,\gamma(t)\vee|u_1(t)-u_2(t)+h(v_1-v_2)|]-K[\delta(t),\gamma(t)]}{h}.
		\end{align*}
		The first limit is trivially equal to $\int_{0}^{L}\sum_{i=1}^{2}\sigma_i(t)v_i'\d x$, while for the second one we employ Corollary~\ref{passageint} and we finally obtain:
		\begin{align*}
			0&\le \int_{0}^{L}\sum_{i=1}^{2}\sigma_i(t)v_i'\d x+\int_{\{\delta(t)>0\}}\big[\partial_y\varphi(\delta(t),\gamma(t))\sgn(u_1(t)-u_2(t))\big](v_1-v_2)\d x\\
			&\quad+ \psi'(0) \int_{\{\gamma(t)=0\}}|v_1-v_2|\d x.
		\end{align*}
		By following the same argument with $-\bm{v}$, we prove \eqref{ELu}.\par
		In particular if $v_1=v_2=:v$ we deduce that
		\begin{equation*}
			\int_{0}^{L}\left(\sum_{i=1}^{2}\sigma_i(t)\right)v'\d x=0,\quad\text{ for every }v\in H^1_0(0,L),
		\end{equation*}
		and so $\sum_{i=1}^{2}\sigma_i(t)$ is constant in $[0,L]$.
	\end{proof}
	We want to point out that if $\psi'(0)$ were equal to $0$ (usually false in a cohesive setting, in which $\psi$ is concave and strictly increasing, see Remark~\ref{rmkexamplephi}), then inequality \eqref{ELu} would actually be equivalent to the system 
	\eqref{systemstresses}. The simplifications brought by the assumption $\psi'(0)=0$ can be also found in \cite{AleFredd1d}, where it has been used for numerical reasons, 
	and in \cite{NegVit}, where it has been exploited to perform an approximation argument.\par
	In our work, however, we do not need that additional (and not reasonable) assumption, indeed inequality \eqref{ELu} will be enough for our purposes.\par
	The next proposition deals with the damage variable $\bm{\alpha}$:
	\begin{prop}\label{Propelalpha}
		Assume $E_i, w_i\in C^1([0,1])$ and let $(\bm{u},\bm{\alpha},\gamma)$ satisfy (CO') and (GS') of Definition~\ref{Generalisedenersol}. 
		Then, for every $t\in [0,T]$ and for every $\bm{\beta}\in [H^1(0,L)]^2$ such that $\beta_i\ge 0$ for $i=1,2$, it holds:
		\begin{equation*}
			\sum_{i=1}^{2}\left(\frac 12 \int_{\{\alpha_i(t)<1\}}E_i'(\alpha_i(t))(u_i(t)')^2\beta_i\d x+\int_{\{\alpha_i(t)<1\}} w_i'(\alpha_i(t))\beta_i\d x
			+\int_{\{\alpha_i(t)<1\}}\alpha_i(t)'\beta_i'\d x\right)\ge 0.
		\end{equation*}
	\end{prop}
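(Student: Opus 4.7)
The idea is to test the global stability condition (GS') with a suitable one-sided variation of $\bm\alpha(t)$, leaving $\bm u(t)$ and $\gamma(t)$ unchanged so that both the elastic displacement and the cohesive contribution cancel out. For fixed $t\in[0,T]$, $\bm\beta\in[H^1(0,L)]^2$ with $\beta_i\ge 0$, and $h>0$, I would set $\widetilde{\bm u}=\bm u(t)$ and
\begin{equation*}
    \widetilde\alpha_i^h := (\alpha_i(t)+h\beta_i)\wedge 1,\qquad i=1,2.
\end{equation*}
Since $\beta_i\ge 0$, these competitors are admissible: $\alpha_i(t)\le\widetilde\alpha_i^h\le 1$ pointwise, and they belong to $H^1(0,L)$ because the truncation $y\mapsto y\wedge 1$ is Lipschitz. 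As $\widetilde\delta=\delta(t)$, one has $\mathcal K[\widetilde\delta,\gamma(t)\vee\widetilde\delta]=\mathcal K[\delta(t),\gamma(t)]$, so (GS') reduces to
\begin{equation*}
    \mathcal E[\bm u(t),\bm\alpha(t)]+\mathcal D[\bm\alpha(t)]\le \mathcal E[\bm u(t),\widetilde{\bm\alpha}^h]+\mathcal D[\widetilde{\bm\alpha}^h].
\end{equation*}

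The plan is then to subtract, divide by $h$, and pass to the limit $h\to 0^+$. The key observation, used for each of the three terms $E_i$, $w_i$, and $|\alpha_i'|^2$, is that on the set $\{\alpha_i(t)<1\}$, continuity of $\beta_i$ (hence boundedness) implies that for $h$ small enough (depending on $x$) no truncation occurs, while on $\{\alpha_i(t)=1\}$ one has $\widetilde\alpha_i^h\equiv 1$ and, by the standard property that Sobolev functions have zero weak derivative a.e.\ on their level sets, both $\alpha_i(t)'$ and $(\widetilde\alpha_i^h)'$ vanish a.e. For the zero-order terms involving $E_i$ and $w_i$, the pointwise difference quotient converges a.e.\ to $\tfrac12 E_i'(\alpha_i(t))(u_i(t)')^2\beta_i\chi_{\{\alpha_i(t)<1\}}$ and $w_i'(\alpha_i(t))\beta_i\chi_{\{\alpha_i(t)<1\}}$ respectively, with a Lipschitz bound on $E_i,w_i$ and $\beta_i\in L^\infty$ providing the $L^1$ dominating function (since $(u_i(t)')^2\in L^1$).

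The delicate term is the gradient one. By the chain rule for Sobolev compositions with the truncation $\min(\cdot,1)$, one has a.e.
\begin{equation*}
    (\widetilde\alpha_i^h)' = (\alpha_i(t)'+h\beta_i')\chi_{\{\alpha_i(t)+h\beta_i<1\}}.
\end{equation*}
On $\{\alpha_i(t)=1\}$ the above quantity and $\alpha_i(t)'$ both vanish a.e., so the difference contributes nothing. On $\{\alpha_i(t)<1\}$, for $h\le h_0$ small enough one has $\alpha_i(t)+h\beta_i<1$ wherever we are in that set, so the difference quotient equals $2\alpha_i(t)'\beta_i'+h(\beta_i')^2$, pointwise converging to $2\alpha_i(t)'\beta_i'$ and dominated in $L^1$ by $2|\alpha_i(t)'||\beta_i'|+h_0(\beta_i')^2$. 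Dominated convergence therefore yields
\begin{equation*}
    \lim_{h\to 0^+}\frac{\tfrac12\int_0^L(\widetilde\alpha_i^h{}')^2\,dx-\tfrac12\int_0^L(\alpha_i(t)')^2\,dx}{h}=\int_{\{\alpha_i(t)<1\}}\alpha_i(t)'\beta_i'\,dx.
\end{equation*}

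Summing over $i=1,2$ and using that the left-hand side of the rescaled (GS') inequality is nonnegative for every $h>0$ gives the claim. The main technical obstacle is exactly the gradient term: one must justify the chain rule for $(\alpha_i(t)+h\beta_i)\wedge 1$ in $H^1$ and the vanishing of $\alpha_i(t)'$ on $\{\alpha_i(t)=1\}$ to produce an integrand that is nontrivially supported only on $\{\alpha_i(t)<1\}$, and then identify a uniform $L^1$ dominant to apply dominated convergence.
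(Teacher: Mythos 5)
Your strategy coincides with the paper's: test (GS') with $\widetilde{\bm u}=\bm u(t)$ and $\widetilde\alpha_i^h=(\alpha_i(t)+h\beta_i)\wedge 1$, split the difference quotient into the $E_i$, $w_i$ and gradient contributions, and let $h\to0^+$; the zero-order terms and the vanishing of $\alpha_i(t)'$ and $(\widetilde\alpha_i^h)'$ a.e.\ on $\{\alpha_i(t)=1\}$ are treated exactly as in the paper. The flaw is in the gradient term: you claim there is a single $h_0>0$ such that $\alpha_i(t)+h\beta_i<1$ on all of $\{\alpha_i(t)<1\}$ for $h\le h_0$. No such uniform $h_0$ exists in general, since $\alpha_i(t)$ may take values arbitrarily close to $1$ on that set; on the transition set $\{\alpha_i(t)<1\le\alpha_i(t)+h\beta_i\}$ the difference quotient equals $-(\alpha_i(t)')^2/(2h)$, for which there is no $h$-independent integrable dominating function, so your dominated-convergence step breaks down. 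Indeed the exact limit you assert is false in general: with $L=1$, $\alpha_i(t,x)=1-x$ and $\beta_i\equiv1$ one has $\tfrac12\int_0^1((\widetilde\alpha_i^h)')^2\d x=\tfrac12(1-h)$, so the difference quotient of the gradient term converges to $-\tfrac12$, whereas $\int_{\{\alpha_i(t)<1\}}\alpha_i(t)'\beta_i'\d x=0$.

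The gap is harmless for the statement because the offending contribution has a sign. Using $(\widetilde\alpha_i^h)'=(\alpha_i(t)'+h\beta_i')\chi_{\{\alpha_i(t)+h\beta_i<1\}}$ a.e., the gradient difference quotient equals $\int_{\{\alpha_i(t)+h\beta_i<1\}}\alpha_i(t)'\beta_i'\d x+\tfrac h2\int_{\{\alpha_i(t)+h\beta_i<1\}}(\beta_i')^2\d x-\tfrac1{2h}\int_{\{\alpha_i(t)+h\beta_i\ge1\}}(\alpha_i(t)')^2\d x$. Discarding the last (nonpositive) term and applying dominated convergence to the first yields only the one-sided estimate $\limsup_{h\to0^+}III_h\le\sum_{i=1}^2\int_{\{\alpha_i(t)<1\}}\alpha_i(t)'\beta_i'\d x$, but this is all that is needed: combined with the limits of the zero-order terms and with $0\le\liminf_{h\to0^+}(I_h+II_h+III_h)$ it gives the claimed inequality. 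This is precisely how the paper argues; your proof becomes correct once the asserted equality for the gradient term is replaced by this $\limsup$ inequality.
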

	\begin{proof}
		We fix $t\in[0,T]$ and by choosing $\widetilde{\bm{u}}=\bm{u}(t)$ in (GS') we get
		\begin{equation}\label{sta}
			\mc E[\bm{u}(t),\bm{\alpha}(t)]+\mc D[\bm{\alpha}(t)]\le \mc E[\bm{u}(t),\widetilde{\bm{\alpha}}]
			+\mc D[\widetilde{\bm{\alpha}}],\text{ for every }\widetilde{\bm{\alpha}}\in [H^1(0,L)]^2\text{ s.t. }\alpha_i(t)\le\alpha_i\le 1.
		\end{equation}
		We now fix $\bm{\beta}\in [H^1(0,L)]^2$ such that $\beta_i\ge 0$ and given $h>0$ we define $\widetilde{\bm{\alpha}}^h(t,x)$ as the vector in $\erre^2$ whose components 
		are $(\alpha_i(t,x)+h\beta_i(x))\wedge 1$. By plugging $\widetilde{\bm{\alpha}}^h(t)$ in \eqref{sta} as a test function and letting $h\to 0^+$ we thus deduce:
		\begin{equation}\label{est1}
			\begin{aligned}
				0&
				\le \liminf\limits_{h\to 0^+}\frac{\mc E[\bm{u}(t),\widetilde{\bm{\alpha}}^h(t)]-\mc E[\bm{u}(t),\bm{\alpha}(t)]+\mc D[\widetilde{\bm{\alpha}}^h(t)]-\mc D[\bm{\alpha}(t)]}{h}\\
				&=\liminf\limits_{h\to 0^+}\sum_{i=1}^{2}\Big(\frac 12 \int_{0}^{L}\frac{E_i(\widetilde{{\alpha}}^h_i(t))-E_i(\alpha_i(t))}{h}(u_i(t)')^2\d x
				+\int_{0}^{L}\frac{w_i(\widetilde{{\alpha}}^h_i(t))-w_i(\alpha_i(t))}{h}\d x\\&\qquad\qquad\qquad
				+\frac 12\int_{0}^{L}\frac{(\widetilde{{\alpha}}^h_i(t)')^2-(\alpha_i(t)')^2}{h}\d x \Big) = \liminf\limits_{h\to 0^+}  (I_h+ II_h+ III_h).
			\end{aligned}
		\end{equation}
		We study the limits of $I_h, \, II_h, \, III_h$ separately. Since $E_i,w_i$ are in  $C^1([0,1])$ we can pass the limit inside the integral in both $I_h$ and $II_h$. 
		We also notice that given $f\in C^1([0,1])$, $a\in[0,1]$ and $b\ge 0$ one has
		\begin{equation*}
			\lim\limits_{h\to 0^+}\frac{f((a+hb)\wedge 1)-f(a)}{h}=\begin{cases}
				f'(a)b,&\text{ if } a\in[0,1),\\
				0,&\text{ if } a=1.
			\end{cases}
		\end{equation*}
		Thus we deduce that
		\begin{equation}\label{est2}
			\lim\limits_{h\to 0^+}I_h=\sum_{i=1}^{2}\frac 12\int_{\{\alpha_i(t)<1\}}\!\!\!\!\!\!\!\!E_i'(\alpha_i(t))(u_i(t)')^2\beta_i\d x,
			\quad\text{ and }\quad \lim\limits_{h\to 0^+}II_h=\sum_{i=1}^{2}\int_{\{\alpha_i(t)<1\}}\!\!\!\!\!\!\!\! w_i'(\alpha_i(t))\beta_i\d x.
		\end{equation}
		To deal with $III_h$ we first observe that $\widetilde{{\alpha}}^h_i(t)'=\begin{cases}
			\alpha_i(t)'+h\beta_i', &\text{ a.e. in }\{\alpha_i(t)+h\beta_i<1\},\\
			0,&\text{ a.e. in }\{\alpha_i(t)+h\beta_i\ge 1\},
		\end{cases}$ and so
		\begin{align*}
			III_h&=\sum_{i=1}^{2}\left(\int_{\{\alpha_i(t)+h\beta_i<1\}}\!\!\alpha_i(t)'\beta_i'\d x+\frac h2\int_{\{\alpha_i(t)+h\beta_i<1\}}\!\!(\beta_i')^2\d x
			-\frac{1}{2h}\int_{\{\alpha_i(t)+h\beta_i\ge1\}}\!\!(\alpha_i(t)')^2\d x\right)\\
			&\le \sum_{i=1}^{2}\left(\int_{\{\alpha_i(t)+h\beta_i<1\}}\alpha_i(t)'\beta_i'\d x+\frac h2\int_{0}^{L}(\beta_i')^2\d x\right).
		\end{align*}
		By an easy application of dominated convergence theorem we hence obtain
		\begin{equation}\label{est3}
			\limsup\limits_{h\to 0^+}III_h\le \sum_{i=1}^{2}\int_{\{\alpha_i(t)<1\}}\alpha_i(t)'\beta_i'\d x,
		\end{equation}
		and collecting \eqref{est1}, \eqref{est2} and \eqref{est3} we conclude.
	\end{proof}
	The last result of the section is a byproduct of the energy balance (EB'), assuming a priori that a generalised energetic evolution possesses a certain regularity in time. 
	This kind of regularity will be however proved in Section~\ref{sectemporalregularity} under suitable convexity assumptions on the data, thus this a priori requirement is 
	not restrictive.\par
	We refer to the Appendix for the definition and the main properties of absolutely continuous functions in Banach spaces, concepts we use in the next proposition.
	\begin{prop}\label{propimportant}
		Assume $E_i,w_i\in C^1([0,1])$ and that $\varphi$ satisfies \eqref{secondassumptions} and ($\varphi$3). Let $(\bm{u},\bm{\alpha},\gamma)$ be a generalised energetic 
		evolution such that:
		\begin{equation*}
			\bm{u},\bm{\alpha}\in AC([0,T];[H^1(0,L)]^2),\quad\text{ and }\quad \gamma\in C^0([0,T], C^0([0,L])).
		\end{equation*}
		Then for a.e. $t\in[0,T]$ one has:
		\begin{align}\label{zeroder}
			&\bullet\frac 12 \int_{0}^{L}\!\!E_i'(\alpha_i(t))(u_i(t)')^2\dot{\alpha}_i(t)\d x+\int_{0}^{L} \!\!w_i'(\alpha_i(t))\dot{\alpha}_i(t)\d x
			+\int_{0}^{L}\alpha_i(t)'\dot{\alpha}_i(t)'\d x=0,\quad\text{ for }i=1,2;\nonumber\\
			&\bullet\lim\limits_{h\to 0}\int_{0}^{L}\frac{\varphi(\delta(t),\gamma(t+h))-\varphi(\delta(t),\gamma(t))}{h}\d x=0.
		\end{align}
	\end{prop}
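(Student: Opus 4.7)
I would differentiate the energy balance (EB') in time, identify the time-derivative of $\mathcal{K}[\delta(t),\gamma(t)]$ from the other (explicitly computable) contributions, and then trap it between upper and lower bounds coming from the Euler--Lagrange inequalities of Propositions~\ref{Propelu} and \ref{Propelalpha} together with the monotonicity of $\gamma$. All arguments are performed at a.e.\ $t\in[0,T]$ at which $\bm{u}$ and $\bm{\alpha}$ are differentiable in $H^1(0,L)$.

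\textbf{Step 1 (differentiating (EB')).} By (EB') the function $\Psi(t):=\mathcal{E}[\bm{u}(t),\bm{\alpha}(t)]+\mathcal{D}[\bm{\alpha}(t)]+\mathcal{K}[\delta(t),\gamma(t)]-\mathcal{W}[\bm{u},\bm{\alpha}](t)$ is constant in $t$. Thanks to the AC hypothesis on $\bm{u},\bm{\alpha}$ together with $E_i,w_i\in C^1$, the maps $\mathcal{E}(\cdot),\mathcal{D}(\cdot),\mathcal{W}(\cdot)$ are absolutely continuous on $[0,T]$, with derivatives computed by chain rule. Setting
\[
A_1:=\sum_{i=1}^2\int_0^L\sigma_i(t)\,\dot{u}_i(t)'\,\d x,\qquad A_2:=-\frac{\dot{\bar{u}}(t)}{L}\int_0^L\sum_{i=1}^2\sigma_i(t)\,\d x=-\dot{\mathcal{W}}(t),
\]
and denoting by $\mathcal{I}_i$ the $i$-th summand appearing in the first bullet, this gives $\dot{\mathcal{E}}+\dot{\mathcal{D}}-\dot{\mathcal{W}}=A_1+A_2+\mathcal{I}_1+\mathcal{I}_2$ a.e.\ in $[0,T]$. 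In particular $t\mapsto\mathcal{K}[\delta(t),\gamma(t)]$ is absolutely continuous and its derivative equals $-(A_1+A_2)-\mathcal{I}_1-\mathcal{I}_2$ a.e.

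\textbf{Step 2 (splitting the increment of $\mathcal{K}$).} For $h>0$ small, split
\[
\mathcal{K}[\delta(t+h),\gamma(t+h)]-\mathcal{K}[\delta(t),\gamma(t)]=P_h+Q_h,
\]
with $P_h:=\mathcal{K}[\delta(t+h),\gamma(t+h)]-\mathcal{K}[\delta(t),\gamma(t+h)]$ and $Q_h:=\mathcal{K}[\delta(t),\gamma(t+h)]-\mathcal{K}[\delta(t),\gamma(t)]$. Since $(u_i(t+h)-u_i(t))/h\to\dot{u}_i(t)$ in $H^1\hookrightarrow C^0$ and $\gamma(t+h)\to\gamma(t)$ uniformly in $[0,L]$, combining the pointwise limits of Lemma~\ref{Lemmader} with the continuity and boundedness of $\partial_y\varphi$ from \eqref{2d} and dominated convergence yields
\[
\lim_{h\to 0^+}\frac{P_h}{h}=B+\psi'(0)\,C',
\]
with $B:=\int_{\{\delta(t)>0\}}\partial_y\varphi(\delta(t),\gamma(t))\sgn(u_1(t)-u_2(t))(\dot{u}_1(t)-\dot{u}_2(t))\,\d x$ and $C':=\int_{\{\gamma(t)=0\}}|\dot{u}_1(t)-\dot{u}_2(t)|\,\d x$. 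Moreover ($\varphi$3) and the monotonicity of $\gamma$ in (IR') give $Q_h\ge 0$ for $h>0$, so by Step~1 also $Q_h/h$ admits a limit, and
\[
\lim_{h\to 0^+}\frac{Q_h}{h}=-(A_1+A_2)-\mathcal{I}_1-\mathcal{I}_2-B-\psi'(0)\,C'\ge 0.
\]

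\textbf{Step 3 (trapping and conclusion).} Testing Proposition~\ref{Propelu} with $\bm{v}=(\dot u_1(t)-\dot{\bar{u}}(t)x/L,\,\dot u_2(t)-\dot{\bar{u}}(t)x/L)\in[H^1_0(0,L)]^2$ yields $|A_1+A_2+B|\le\psi'(0)\,C'$, hence $-(A_1+A_2)-B-\psi'(0)\,C'\le 0$. Testing Proposition~\ref{Propelalpha} with $\bm{\beta}=(\dot\alpha_1(t),0)$ and $(0,\dot\alpha_2(t))$ separately, and using that $\dot\alpha_i$ and its weak spatial derivative both vanish a.e.\ on $\{\alpha_i(t)=1\}$ (so the integrals over $\{\alpha_i(t)<1\}$ extend to all of $[0,L]$), yields $\mathcal{I}_1\ge 0$ and $\mathcal{I}_2\ge 0$. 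Three non-negative quantities thus sum to a non-positive one and must all vanish, proving the first bullet and the $h\to 0^+$ half of the second bullet. The case $h\to 0^-$ is handled by the symmetric splitting; the main obstacle is that the integrand $\varphi(\delta(t),\gamma(t+h))$ may formally leave the domain $\mathcal{T}$ on the (possibly non-empty) set where $\gamma(t+h)<\delta(t)$, but uniform continuity of $\gamma$ in $t$ makes this exceptional set shrink as $h\to 0^-$ and its contribution turns out to be $o(h)$, so the same trapping argument yields the full two-sided limit.
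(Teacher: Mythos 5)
Your route is the same as the paper's: differentiate (EB') using the assumed absolute continuity, split the derivative of $\mc K$ into the $\delta$--increment and the $\gamma$--increment, obtain nonnegativity of the damage terms from Proposition~\ref{Propelalpha} tested with $\dot{\alpha}_i(t)$ (legitimate, since $\dot\alpha_i(t)\ge0$ vanishes where $\alpha_i(t)=1$), nonnegativity of the $\gamma$--increment from ($\varphi$3) and (IR'), and close the argument with Proposition~\ref{Propelu} tested with $v_i=\dot u_i(t)-\dot{\bar u}(t)x/L$. The bookkeeping with $A_1,A_2,B,\mathcal I_i$ is correct.

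There is, however, one step whose justification does not stand as written: the identification $\lim_{h\to0^+}P_h/h=B+\psi'(0)C'$ on the set $\{\gamma(t)=0\}$. Lemma~\ref{Lemmader} computes the derivative of $h\mapsto\varphi(|g+hv|,f\vee|g+hv|)$, whose second argument equals exactly $h|v|$ when $f=|g|=0$; in your $P_h$ the second argument is $\gamma(t+h)$, which is only bounded below by $\delta(t+h)$ and may be of much larger order. Near the origin $\partial_y\varphi$ is direction--dependent (it vanishes along $\{y=0\}$ by \eqref{2c} and equals $\psi'$ on the diagonal), so on $\{\gamma(t)=0\}$ the quotient is not covered by the lemma and, a priori, could converge to any value between $0$ and $\psi'(0)|\dot u_1(t)-\dot u_2(t)|$ or not converge at all. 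This matters for your trapping: you only use one side of \eqref{ELu}, namely $-(A_1+A_2)-B\le\psi'(0)C'$, so if the limiting constant on $\{\gamma(t)=0\}$ were strictly smaller than $\psi'(0)$ while $C'>0$, the chain $0\le\lim Q_h/h\le-\mathcal I_1-\mathcal I_2$ would not close. The missing observation — the one the paper uses — is that $\gamma(t,x)=0$ forces $\delta(\tau,x)=0$ for every $\tau\in[0,t]$, hence $\dot u_1(t,x)=\dot u_2(t,x)$ there. Consequently $C'=0$, the contribution of $\{\gamma(t)=0\}$ to $P_h/h$ is $o(1)$ (the integrand is bounded by $\Vert\partial_y\varphi\Vert_{\infty}\,\delta(t+h,x)/h$ with $\delta(t+h,x)=o(h)$ uniformly on that set), and \eqref{ELu} yields the exact identity $A_1+A_2+B=0$. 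The same observation is what makes the left limit work: your claim that the set $\{\gamma(t+h)<\delta(t)\}$ shrinks as $h\to0^-$ is false in general (think of points where $\delta$ is increasing and coincides with $\gamma$ on a whole time interval); instead, for $h<0$ one uses that $\gamma(t+h)=\delta(t+h)=0$ on $\{\gamma(t)=0\}$, the continuity of $\partial_y\varphi$ away from the origin elsewhere, and the two--sided a.e.\ existence of $\frac{\d}{\d t}\mc K[\delta(t),\gamma(t)]$ (the map is absolutely continuous) to get $\lim_{h\to0^-}P_h/h=B$ and hence the full two--sided conclusion, as in the paper.
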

	\begin{proof}
		First of all we notice that the temporal regularity we are assuming on $\bm{u}$ and $\bm{\alpha}$  ensures that the maps $t\mapsto \mc E[\bm{u}(t), \bm{\alpha}(t)]$ 
		and $t\mapsto \mc D[\bm{\alpha}(t)]$ are absolutely continuous in $[0,T]$. Moreover for almost every time $t\in[0,T]$ the following expressions for their derivatives can 
		be easily obtained:
		\begin{subequations}\label{derED}
			\begin{equation}
				\!\!\!\frac{\d}{\d t}\mc E[\bm{u}(t), \bm{\alpha}(t)]
				=\sum_{i=1}^{2}\left(\frac 12 \int_{0}^{L}E_i'(\alpha_i(t))(u_i(t)')^2\dot{\alpha}_i(t)\d x+\int_{0}^{L}E_i(\alpha_i(t))u_i(t)'\dot{u}_i(t)'\d x\right);
			\end{equation}
			\begin{equation}
				\frac{\d}{\d t}\mc D[\bm{\alpha}(t)]=\sum_{i=1}^{2}\left(\int_{0}^{L}w_i'(\alpha_i(t))\dot{\alpha}_i(t)\d x+\int_{0}^{L}\alpha_i(t)'\dot{\alpha}_i(t)'\d x\right).
			\end{equation}
		\end{subequations}
		By (EB'), since the work of the prescribed displacement $\mc W[\bm{u},\bm{\alpha}]$ is absolutely continuous by definition, we now deduce that also 
		the map $t\mapsto \mc K[\delta(t),\gamma(t)]$ is absolutely continuous in $[0,T]$. Moreover we know that $\delta$ belongs to $AC([0,T];H^1_0(0,L))$, 
		indeed both $u_1$ and $u_2$ are absolutely continuous with values in $H^1(0,L)$ by assumption. 
		Thus for almost every $t\in [0,T]$ there exists the derivative of $\mc K[\delta(t),\gamma(t)]$ and we can compute:
		\begin{equation}\label{derK}
			\begin{aligned}
				\frac{\d}{\d t}\mc K[\delta(t),\gamma(t)]&=\lim\limits_{h\to 0}\int_{0}^{L}\frac{\varphi(\delta(t+h),\gamma(t+h))-\varphi(\delta(t),\gamma(t))}{h}\d x\\
				&=\int_{0}^{L}\partial_y\varphi(\delta(t),\gamma(t))\dot{\delta}(t)\d x+\lim\limits_{h\to 0}\int_{0}^{L}\frac{\varphi(\delta(t),\gamma(t+h))-\varphi(\delta(t),\gamma(t))}{h}\d x,
			\end{aligned}
		\end{equation}
		where we exploited the continuity assumption of both $\partial_y\varphi$ and $\gamma$.\par
		Differentiating (EB'), using \eqref{derED} and \eqref{derK}, and recalling that the sum of the stresses $\sigma_i$ is constant in $[0,L]$ by Proposition~\ref{Propelu}, 
		we deduce, for almost every $t\in [0,T]$,
		\begin{equation}\label{dereb}
			\begin{aligned}
				0&=\sum_{i=1}^{2}\left(\frac 12 \int_{0}^{L}E_i'(\alpha_i(t))(u_i(t)')^2\dot{\alpha}_i(t)\d x
				+\int_{0}^{L}w_i'(\alpha_i(t))\dot{\alpha}_i(t)\d x+\int_{0}^{L}\alpha_i(t)'\dot{\alpha}_i(t)'\d x\right)\\
				&\quad+\int_{0}^{L}\sum_{i=1}^{2}E_i(\alpha_i(t))u_i(t)'\dot{u}_i(t)'\d x+\int_{0}^{L}\partial_y\varphi(\delta(t),\gamma(t))\dot{\delta}(t)\d x
				-\dot{\bar{u}}(t)\sum_{i=1}^{2}\sigma_i(t,0)\\
				&\quad+\lim\limits_{h\to 0}\int_{0}^{L}\frac{\varphi(\delta(t),\gamma(t+h))-\varphi(\delta(t),\gamma(t))}{h}\d x.
			\end{aligned}
		\end{equation}
		The term in the third line of \eqref{dereb} is nonnegative by means of ($\varphi$3) and the fact that $\gamma$ is non-decreasing (in time). 
		We thus conclude if we show that also the sum of the terms in the second line and each of the two terms (for $i=1,2$) in the sum in the first line are nonnegative. \par
		We first focus on the first line. We notice that for $i=1,2$, the function $\dot{\alpha}_i(t)\in H^1(0,L)$ is nonnegative and vanishes on the set $\{\alpha_i(t)=1\}$; 
		indeed $\alpha_i$ is non-decreasing in time and it is always less or equal than $1$. This means that we can use it as a test function in Proposition~\ref{Propelalpha}, 
		getting for a.e. $t\in [0,T]$:
		\begin{equation*}
			\frac 12 \int_{0}^{L}E_i'(\alpha_i(t))(u_i(t)')^2\dot{\alpha}_i(t)\d x+\int_{0}^{L}w_i'(\alpha_i(t))\dot{\alpha}_i(t)\d x+\int_{0}^{L}\alpha_i(t)'\dot{\alpha}_i(t)'\d x\ge 0.
		\end{equation*}
		As regards the sum of the terms in the second line in \eqref{dereb}, we actually prove it is equal to zero. To this aim we make use of Proposition~\ref{Propelu} 
		choosing as test functions $v_i(x)=\dot{u}_i(t,x)-\dot{\bar{u}}(t)x/L\in H^1_0(0,L)$, so that $|v_1-v_2|=|\dot{u}_1(t)-\dot{u}_2(t)|$. 
		We indeed notice that $|v_1-v_2|=0$ on the set $\{\gamma(t)=0\}$: if $x$ belongs to that set, then $u_1(\tau,x)=u_2(\tau,x)$ for every $\tau\in[0,t]$, 
		and thus $\dot{u}_1(t,x)=\dot{u_2}(t,x)$. So we deduce for a.e. $t\in[0,T]$:
		\begin{align*}
			0&=\int_{0}^{L}\sum_{i=1}^{2}\sigma_i(t)\left(\dot{u}_i(t)'-\frac{\dot{\bar{u}}(t)}{L}\right)\d x
			+\int_{\{\delta(t)>0\}}\!\!\!\!\!\!\!\!\!\!\big[\partial_y\varphi(\delta(t),\gamma(t))\sgn(u_1(t)-u_2(t))\big](\dot{u}_1(t)-\dot{u}_2(t))\d x\\
			&=\int_{0}^{L}\sum_{i=1}^{2}E_i(\alpha_i(t))u_i(t)'\dot{u}_i(t)'\d x+\int_{0}^{L}\partial_y\varphi(\delta(t),\gamma(t))\dot{\delta}(t)\d x
			-\dot{\bar{u}}(t)\sum_{i=1}^{2}\sigma_i(t,0).
		\end{align*}
		In the above equality we first used the fact that by definition 
		$$\dot{\delta}(t)=(\dot{u}_1(t)-\dot{u}_2(t))\sgn(u_1(t)-u_2(t)),\quad\text{ in }\{\delta(t)>0\},$$ 
		and then we exploited the assumption $\partial_y\varphi(0,z)=0$ for $z>0$.\par
		So the proof is complete.
	\end{proof}
	\section{Temporal Regularity and Equivalence between $\gamma$ and $\delta_h$}\label{sectemporalregularity}
	
	In this last section we finally develop the strategy which will allow to show that the fictitious history variable $\gamma$ actually coincides with the concrete one 
	$\delta_h$ in some meaningful cases, see Theorems~\ref{thmgammadeltah} and \ref{finalthm}. The argument, which exploits the results of Section~\ref{seceulerlagrange}, 
	is based on the regularity in time of generalised energetic evolutions; this feature, as noticed in \cite{Thom}, is a peculiarity of systems governed by convex energies. 
	For this reason, in this section we need to strengthen the assumptions on the data, requiring for $i=1,2$:
	\begin{equation}\label{convexE}
		E_i\in C^2([0,1])\text{ is convex and satisfies }\frac 12 E_i''(y)E_i(y)-E_i'(y)^2>0\text{ for every }y\in[0,1];
	\end{equation}
	\begin{equation}\label{convexw}
		\begin{gathered}
			w_i\in C^1([0,1])\text{ satisfies \eqref{wi} and is uniformly convex with parameter }\mu_i>0,\text{ namely }\\
			w_i(\theta y^a+(1-\theta)y^b)\le \theta w_i(y^a)+(1{-}\theta)w_i(y^b)-\frac{\mu_i}{2}\theta(1{-}\theta)|y^a{-}y^b|^2,\text{ for every }\theta,y^a,y^b\in [0,1].
		\end{gathered}
	\end{equation}
	We notice that \eqref{convexE} implies \eqref{Ecoerc}, while \eqref{convexw} is trivially satisfied for instance by the simple example $w_i(y)=\frac{y^2+y}{2}$. We also define
	\begin{equation}
		M_i:=\max\limits_{y\in[0,1]}E_i''(y),\quad\quad m_i:=\min\limits_{y\in[0,1]}\left(\frac 12 E_i''(y)E_i(y)-E_i'(y)^2\right),
	\end{equation}
	which are strictly positive by \eqref{convexE}, and we finally denote by $\mu$ the minimum between $\mu_1$ and $\mu_2$, namely
	\begin{equation}
		\mu:=\mu_1\wedge\mu_2>0.
	\end{equation}
	\begin{rmk}[\textbf{Hardening Materials}]
		Condition \eqref{convexE} is a characteristic of the so called hardening materials, namely those materials for which the compliance $S(y):=E(y)^{-1}$ is strictly concave. 
		Indeed by simple calculations one has:
		\begin{equation*}
			S''(y)=-\frac{2}{E(y)^3}\left(\frac 12 E''(y)E(y)-E'(y)^2\right),
		\end{equation*}
		from which $S''<0$ if and only if \eqref{convexE} is satisfied. Temporal regularity of evolutions is expected only for this kind of materials, indeed in the opposite 
		framework of softening materials (with convex compliance $S$) discontinuous evolutions are common due to snap--back phenomena (see also the analysis of \cite{PhamMarigo}).
	\end{rmk}
	Of course we also need some sort of convexity for the loading--unloading density $\varphi$. However, we recall that usually it originates from a concave function $\psi$, 
	see Remark~\ref{rmkexamplephi}; thus, in order to keep that crucial property, we only require a weak form of convexity assumption on $\psi$, already adopted in \cite{NegSca}:
	\begin{subequations}\label{convexityassumptionsphi}
		\begin{equation}\label{convexpsi}
			\begin{gathered}
				\text{the function }\psi \text{ is }\lambda\text{--convex for some }\lambda>0,\text{ namely for every }\theta\in [0,1]\text{ and }z^a,z^b\in [0,+\infty)\\
				\psi(\theta z^a+(1-\theta)z^b)\le \theta\psi (z^a)+(1-\theta)\psi (z^b)+\frac \lambda 2\theta(1-\theta)|z^a-z^b|^2,
			\end{gathered}
		\end{equation}
		while for $\varphi$ itself, in addition to \eqref{secondassumptions}, we assume:
		\begin{equation}\label{convexphi}
			\text{for every }z\in(0,+\infty)\text{ the map }\varphi(\cdot,z)\text{ is non-decreasing and convex.}
		\end{equation}
	\end{subequations}
	\begin{rmk}
		Coupling \eqref{secondassumptions} with \eqref{convexityassumptionsphi} we have thus recovered the assumptions ($\varphi$5)--($\varphi$8) listed in Section~\ref{secsetting}. 
		We point out again that they are satisfied by the prototypical example of loading--unloading density $\varphi$ given by \eqref{examplephi}.
	\end{rmk}
	A crucial condition on the involved parameters will be given by
	\begin{equation}\label{maincondition}
		\frac{m_1}{M_1}\wedge\frac{m_2}{M_2}>\lambda \frac{L^2}{\pi^2}.
	\end{equation}
	It morally says that the convexity of the internal energy $\mc E$, represented by $\frac{m_1}{M_1}\wedge\frac{m_2}{M_2}$, is stronger than the concavity of $\mc K$, 
	represented by $\lambda$, and thus the overall behaviour is the one of a convex energy.
	\begin{rmk}
		As already observed in \cite{Thom,PhamMarigo}, a simple example of functions satisfying \eqref{convexE} is given by
		\begin{equation*}
			E_i(y)=\frac{a_i}{(1+y)^{b_i}},\quad\text{ with }a_i>0\text{ and }b_i\in (0,1).
		\end{equation*}
		In this case indeed it holds
		\begin{equation*}
			\frac 12 E_i''(y)E_i(y)-E_i'(y)^2=\frac{a_i^2}{2}\frac{b_i(1-b_i)}{(1+y)^{2(1+b_i)}}\ge \frac{a_i^2}{2}\frac{b_i(1-b_i)}{4^{1+b_i}}=m_i.
		\end{equation*}
		Moreover $M_i=\max\limits_{y\in[0,1]}E_i''(y)= a_i b_i(1+b_i)$, so that
		\begin{equation*}
			\frac{m_i}{M_i}=\frac{a_i}{2}\frac{1-b_i}{1+b_i}\frac{1}{4^{1+b_i}}.
		\end{equation*}
		In the particular case in which $a_1=a_2=:a$ and $b_1=b_2=1/2$ we get $\frac{m_1}{M_1}=\frac{m_2}{M_2}=\frac{a}{48}$ and so condition \eqref{maincondition} can be written as:
		\begin{equation*}
			\frac{a}{\lambda L^2}>\frac{48}{\pi^2},
		\end{equation*}
		and can be achieved by increasing the parameter $a$ or by decreasing $\lambda$ or the lenght of the bar $L$.
	\end{rmk}
	For convenience, in this section we also introduce the notation of the ``shifted'' energy, see also \cite{MielkRoub}, Remark 3.2. For $t\in [0,T]$ and $x\in [0,L]$ we define the 
	function $\bar{\bm{u}}_D(t,x):=\left(\frac{\bar{u}(t)}{L}x,\frac{\bar{u}(t)}{L}x\right)$ and we present the shifted variable $\bm{v}(t)=\bm{u}(t)-\bar{\bm{u}}_D(t)$, which has 
	zero boundary conditions and hence it belongs to $ [H^1_0(0,L)]^2$. We finally introduce the shifted energy:
	\begin{equation*}
		\mc E_D[t,\bm{v}(t),\bm{\alpha}(t)]:=\mc E[\bm{v}(t)+\bar{\bm{u}}_D(t),\bm{\alpha}(t)]=\mc E[\bm{u}(t),\bm{\alpha}(t)],
	\end{equation*}
	and we want to highlight its explicit dependence on time given by the prescribed displacement and encoded by the function $\bar{\bm{u}}_D$.
	Written in this form, the energy allows us to recast the work of the external prescribed displacement \eqref{work} in the following way:
	\begin{equation}\label{power}
		\mc W[\bm{u},\bm{\alpha}](t)=\int_{0}^{t}\partial_t\mc E_D[\tau,\bm{v}(\tau),\bm{\alpha}(\tau)]\d\tau.
	\end{equation}
	Moreover, by simple computations, it is easy to see that for almost every time $\tau\in [0,T]$ and for every $t\in [0,T]$ the following inequality holds true:
	\begin{equation}\label{powercontrol}
		\begin{aligned}
			&\quad|\partial_t\mc E_D[\tau,\bm{v}(\tau),\bm{\alpha}(\tau)]-\partial_t\mc E_D[\tau,\bm{v}(t),\bm{\alpha}(t)]|\\
			&\le C|\dot{\bar{u}}(\tau)|\Big(\Vert\bm{\alpha}(\tau)-\bm{\alpha}(t)\Vert^2_{[H^1(0,L)]^2}+\Vert\bm{v}(\tau)'-\bm{v}(t)'\Vert^2_{[L^2(0,L)]^2}\Big)^\frac 12,
		\end{aligned}
	\end{equation}
	where $C>0$ is a suitable positive constant.\par
	Furthermore we also notice that the global stability condition (GS') of Definition~\ref{Generalisedenersol} can be rewritten as:
	\begin{itemize}
		\item[] for every $t\in[0,T]$ one has $\gamma(t)\ge \delta(t)$ in $[0,L]$ and
		\begin{equation*}
			\mathcal{E}_D[t,\bm{v}(t),\bm{\alpha}(t)]+\mathcal D[\bm{\alpha}(t)]+\mathcal{K}[\delta(t),\gamma(t)]\le\mathcal{E}_D[t,{\widetilde{\bm v}},{\widetilde{\bm\alpha}}]
			+\mathcal D[{\widetilde{\bm\alpha}}]+\mathcal{K}[\widetilde\delta,\gamma(t)\vee\widetilde\delta],
		\end{equation*}
		for every ${\widetilde{\bm v}}\in [H^1_0(0,L)]^2$ and for every ${\widetilde{\bm\alpha}}\in [H^1(0,L)]^2$ such that ${\alpha}_i(t)\le{\widetilde{\alpha}_i}\le {1}$ in $[0,L]$ 
		for $i=1,2$;
	\end{itemize}
	We finally have all the ingredients to start the analysis regarding the temporal regularity of generalised energetic evolutions. We first state a useful lemma, whose simple proof 
	can be found for instance in \cite{GidRiv}, Lemma~5.6, or in \cite{HeiMielk}, Lemma~4.3.
	\begin{lemma}\label{inequalityintegral}
		Let $(X,\Vert\cdot\Vert)$ be a normed space and let $f\colon[a,b]\to X$ be a bounded measurable function such that:
		\begin{equation*}
			\Vert f(t)-f(s)\Vert^2\le \int_{s}^{t}\Vert f(t)-f(\tau)\Vert g(\tau)d\tau,\quad\text{for every }a\le s\le t\le b,
		\end{equation*}
		for some nonnegative $g\in L^1(a,b)$. Then actually it holds:
		\begin{equation*}
			\Vert f(t)-f(s)\Vert\le \int_{s}^{t}g(\tau)d\tau,\quad\text{for every }a\le s\le t\le b.
		\end{equation*}
	\end{lemma}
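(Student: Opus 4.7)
The plan is to exploit the hypothesis to turn the inequality into a differential one for a suitable auxiliary function, and then integrate. First I would fix the right endpoint $t \in [a,b]$ and, for $s \in [a,t]$, set $h(s) := \Vert f(t)-f(s)\Vert$ and
\[
H(s) := \int_s^t h(\tau)\,g(\tau)\,\d\tau.
\]
Since $f$ is bounded and measurable, so is $h$, and hence $h\,g \in L^1(a,t)$; consequently $H$ is absolutely continuous, nonnegative, non-increasing, with $H(t)=0$ and $H'(s) = -h(s)g(s)$ for a.e.\ $s$. The assumption of the lemma is precisely $h(s)^2 \le H(s)$, i.e.\ $h(s) \le \sqrt{H(s)}$.

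Next I would relate $H$ to the primitive of $g$ via the square root. The naive step is to write
\[
-\frac{\d}{\d s}\sqrt{H(s)} = -\frac{H'(s)}{2\sqrt{H(s)}} = \frac{h(s)g(s)}{2\sqrt{H(s)}} \le \frac{g(s)}{2},
\]
and then integrate from $s$ to $t$. Since $\sqrt{H}$ need not be absolutely continuous near its zeros (the map $r\mapsto \sqrt r$ has unbounded derivative at the origin), I would first regularise: for $\eps>0$ the function $\phi_\eps(s):=\sqrt{H(s)+\eps}$ is the composition of the AC function $H$ with a globally Lipschitz map, and is therefore absolutely continuous with derivative $\phi_\eps'(s) = -\frac{h(s)g(s)}{2\sqrt{H(s)+\eps}} \ge -\frac{g(s)}{2}$ for a.e.\ $s$, where I used $h(s)\le\sqrt{H(s)}\le\sqrt{H(s)+\eps}$ and $g\ge 0$.

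Integrating this pointwise bound from $s$ to $t$ and recalling $H(t)=0$ gives
\[
\sqrt{H(s)+\eps} \le \sqrt{\eps} + \frac{1}{2}\int_s^t g(\tau)\,\d\tau,
\]
so letting $\eps\to 0^+$ yields $\sqrt{H(s)} \le \tfrac{1}{2}\int_s^t g(\tau)\,\d\tau$, and therefore
\[
\Vert f(t)-f(s)\Vert = h(s)\le \sqrt{H(s)} \le \frac{1}{2}\int_s^t g(\tau)\,\d\tau \le \int_s^t g(\tau)\,\d\tau,
\]
which is the stated conclusion (in fact with the extra factor $1/2$). The only real technical point is the non-Lipschitz behaviour of the square root at the origin, and it is handled cleanly by the $\eps$-regularisation above; everything else is routine calculus on absolutely continuous real-valued functions.
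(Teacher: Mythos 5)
Your argument is correct. Note that the paper does not actually prove Lemma~\ref{inequalityintegral}: it only refers to \cite{GidRiv} (Lemma~5.6) and \cite{HeiMielk} (Lemma~4.3), so you are supplying a self-contained proof rather than reproducing one from the text. Your route is the classical Gronwall/Brezis-type argument: with $t$ fixed, the hypothesis reads $h(s)^2\le H(s)$ for $H(s)=\int_s^t h\,g\,\d\tau$, and you differentiate the regularised square root $\sqrt{H+\eps}$, use $h\le\sqrt{H}\le\sqrt{H+\eps}$ to get $\frac{\d}{\d s}\sqrt{H(s)+\eps}\ge -g(s)/2$ a.e., integrate, and let $\eps\to0^+$. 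All the technical points are in order: $\tau\mapsto\Vert f(t)-f(\tau)\Vert$ is bounded and measurable, so $H$ is absolutely continuous with $H'=-hg$ a.e.; the composition with the globally Lipschitz (indeed $C^1$) map $r\mapsto\sqrt{r+\eps}$ is again absolutely continuous and the chain rule holds a.e.; and the $\eps$-regularisation correctly avoids the degeneracy of $\sqrt{\cdot}$ at zeros of $H$. Your proof in fact yields the sharper bound $\Vert f(t)-f(s)\Vert\le\frac12\int_s^t g\,\d\tau$, consistent with Brezis' classical lemma, which is more than the statement requires. For comparison, an even shorter argument is available and is closer in spirit to the cited references: setting $M(s):=\sup_{\tau\in[s,t]}\Vert f(t)-f(\tau)\Vert$ (finite since $f$ is bounded), the hypothesis gives $h(\tau)^2\le M(s)\int_s^t g\,\d r$ for every $\tau\in[s,t]$, hence $M(s)^2\le M(s)\int_s^t g\,\d r$ and so $\Vert f(t)-f(s)\Vert\le M(s)\le\int_s^t g\,\d r$; this avoids differentiation altogether but only gives the constant $1$, which is all the lemma asks for.
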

	We are now in a position to state and prove the first  result of this section, which yields temporal regularity of generalised energetic evolutions under the convexity assumptions 
	we stated before. The argument is based on the ideas of \cite{Thom}, adapted to our setting where also a cohesive energy (concave by nature) is taken into account.
	\begin{prop}[\textbf{Temporal Regularity}]\label{propregularity}
		Assume $E_i$ satisfies \eqref{convexE}, $w_i$ satisfies \eqref{convexw}, and assume $\varphi\in C^0(\mc T)$ satisfies ($\varphi$3), ($\varphi$5)--($\varphi$8). 
		Let $(\bm{u},\bm{\alpha},\gamma)$ be a generalised energetic evolution related to the prescribed displacement $\bar{u}\in AC([0,T])$. If condition \eqref{maincondition} 
		on the parameters is satisfied, then both the displacements $\bm{u}$ and the damage variables $\bm{\alpha}$ belong to $ AC([0,T];[H^1(0,L)]^2)$, and so one also has 
		$\delta\in AC([0,T];H^1(0,L))$ and $\delta_h\in AC([0,T]; C^0([0,L]))$.\par
		If in addition the family $\{\gamma(t)\wedge\bar\delta\}_{t\in[0,T]}$, with $\bar\delta$ introduced in \eqref{bardelta}, is equicontinuous and for every $y\in[0,\bar\delta)$ the map $\varphi(y,\cdot)$ is strictly 
		increasing in $[y,\bar\delta)$, then the function $\gamma\wedge \bar\delta$ belongs to $ C^0([0,T];C^0([0,L])$.
	\end{prop}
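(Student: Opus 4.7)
\medskip

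My plan is to mimic the strategy of Thomas (quoted in the excerpt) to obtain temporal regularity, but with the extra work needed to absorb the concave cohesive part. Fix $0\le s<t\le T$ and test the stability condition (GS') at time $s$ with the admissible convex combination
\[
\bm{v}^{1/2}:=\tfrac12\bm{v}(s)+\tfrac12\bm{v}(t),\qquad \bm{\alpha}^{1/2}:=\tfrac12\bm{\alpha}(s)+\tfrac12\bm{\alpha}(t),
\]
which is admissible since $\alpha_i^{1/2}\ge\alpha_i(s)$ by irreversibility. The core computation will be to upper bound $\mc E_D[s,\bm{v}^{1/2},\bm{\alpha}^{1/2}]+\mc D[\bm{\alpha}^{1/2}]+\mc K[\delta^{1/2},\gamma(s)\vee\delta^{1/2}]$ by the average of the values at $s$ and at $t$ (with the second argument of $\mc K$ at $t$ being $\gamma(s)\vee\delta(t)$), modulo a quadratic remainder. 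For $\mc E_D$ I will use the pointwise Hessian inequality
\[
\tfrac12 E_i''(\alpha)p^2\xi^2+2E_i'(\alpha)p\,\xi\eta+E_i(\alpha)\eta^2\;\ge\;\tfrac{2m_i}{M_i}\eta^2,
\]
which follows from \eqref{convexE} by choosing the optimal splitting (the discriminant condition $\tfrac12E_i''E_i>(E_i')^2$ allows to peel off $\tfrac{2m_i}{M_i}\eta^2$ and leave a nonnegative form). For $\mc D$ I use the $\mu$-uniform convexity of $w_i$ together with convexity of the Dirichlet integral, and for $\mc K$ I use that, for each fixed $\gamma\ge0$, the map $\delta\mapsto\varphi(\delta,\gamma\vee\delta)$ is $\lambda$-convex and non-decreasing: this is because on $\{\delta\le\gamma\}$ it equals the convex $\varphi(\cdot,\gamma)$ (by \eqref{convexphi}), on $\{\delta\ge\gamma\}$ it equals the $\lambda$-convex $\psi$ (by \eqref{convexpsi}), and ($\varphi$7) guarantees $C^1$ matching at $\delta=\gamma$. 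Since $\delta^{1/2}\le \tfrac12\delta(s)+\tfrac12\delta(t)$ by the triangle inequality and the function is monotone, I may apply $\lambda$-convexity to the convex combination.

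Combining these three inequalities with (GS') and reorganizing I will obtain
\[
\mc E_D[s,\bm{v}(s),\bm{\alpha}(s)]+\mc D[\bm{\alpha}(s)]+\mc K[\delta(s),\gamma(s)] + A \;\le\; \mc E_D[s,\bm{v}(t),\bm{\alpha}(t)]+\mc D[\bm{\alpha}(t)]+\mc K[\delta(t),\gamma(s)\vee\delta(t)] + B,
\]
where $A=\tfrac12\sum_i\tfrac{m_i}{M_i}\|\Delta v_i'\|_{L^2}^2+\tfrac14\mu\|\Delta\bm{\alpha}\|_{L^2}^2+\tfrac14\|\Delta\bm{\alpha}'\|_{L^2}^2$ and $B=\tfrac{\lambda}{4}\|\Delta\delta\|_{L^2}^2$. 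Now I invoke the energy balance (EB') to replace the LHS of the stability inequality by the analogous quantity at time $t$ minus $\int_s^t\partial_t\mc E_D[\tau,\bm{v}(\tau),\bm{\alpha}(\tau)]\,d\tau$, and I use $\gamma(s)\vee\delta(t)\le\gamma(t)$ together with ($\varphi$3) to discard a nonnegative $\mc K$ contribution. Writing $\mc E_D[t,\bm{v}(t),\bm{\alpha}(t)]-\mc E_D[s,\bm{v}(t),\bm{\alpha}(t)]=\int_s^t\partial_t\mc E_D[\tau,\bm{v}(t),\bm{\alpha}(t)]\,d\tau$ yields
\[
A-B\;\le\;\int_s^t\bigl(\partial_t\mc E_D[\tau,\bm{v}(\tau),\bm{\alpha}(\tau)]-\partial_t\mc E_D[\tau,\bm{v}(t),\bm{\alpha}(t)]\bigr)\,d\tau.
\]
The \textbf{main obstacle} is absorbing $B$ into $A$: using $|\Delta\delta|\le|\Delta v_1-\Delta v_2|$ pointwise together with the Poincar\'e inequality on $H^1_0(0,L)$ (optimal constant $L/\pi$) gives $\|\Delta\delta\|_{L^2}^2\le \tfrac{2L^2}{\pi^2}\sum_i\|\Delta v_i'\|_{L^2}^2$, and the sharp condition \eqref{maincondition} is precisely what makes $A-B\ge c_0(\|\Delta\bm{v}'\|_{L^2}^2+\|\Delta\bm{\alpha}\|_{H^1}^2)$ for some $c_0>0$. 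Combining with the control \eqref{powercontrol} of the RHS and applying Lemma~\ref{inequalityintegral} to $f(t):=(\bm{v}(t),\bm{\alpha}(t))$ in the norm $(\|\cdot\|_{H^1_0}^2+\|\cdot\|_{H^1}^2)^{1/2}$ yields $\bm{v},\bm{\alpha}\in AC([0,T];[H^1(0,L)]^2)$ with modulus $C|\dot{\bar u}|$; since $\bm{u}=\bm{v}+\bar{\bm{u}}_D$ and $\bar u\in AC([0,T])$, the same holds for $\bm{u}$. Then $\delta=|u_1-u_2|$ inherits the $AC$ property into $H^1_0(0,L)$, and the elementary estimate
\[
0\le\delta_h(t,x)-\delta_h(s,x)\le\sup_{\tau\in[s,t]}|\delta(\tau,x)-\delta(s,x)|
\]
combined with the embedding $H^1(0,L)\hookrightarrow C^0([0,L])$ gives $\delta_h\in AC([0,T];C^0([0,L]))$.

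For the second assertion, I argue from continuity of $t\mapsto\mc K[\delta(t),\gamma(t)]$: this map is absolutely continuous by (EB') because all other terms in (EB') are (using the regularity just proved). Fix $t_0\in[0,T]$; by monotonicity of $\gamma$ in time and the equicontinuity of $\{\gamma(t)\wedge\bar\delta\}_t$, the pointwise limit $\gamma(t_0^+)(x):=\lim_{t\to t_0^+}\gamma(t,x)$ exists everywhere and $\gamma(t_0^+)\ge\gamma(t_0)$. Using $\delta(t)\to\delta(t_0)$ uniformly, dominated convergence together with the identity \eqref{equalitiphidelta} (equivalent to truncating both arguments at $\bar\delta$) gives
\[
\int_0^L\!\!\bigl[\varphi(\delta(t_0),\gamma(t_0^+)\wedge\bar\delta)-\varphi(\delta(t_0),\gamma(t_0)\wedge\bar\delta)\bigr]\,dx=0.
\]
The integrand is nonnegative by ($\varphi$3), hence vanishes a.e. The strict monotonicity of $\varphi(y,\cdot)$ on $[y,\bar\delta)$ then forces $\gamma(t_0^+)(x)\wedge\bar\delta=\gamma(t_0,x)\wedge\bar\delta$ for a.e.\ $x$, and equicontinuity in $x$ upgrades this to all $x\in[0,L]$. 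The same argument handles the left limit, and equicontinuity promotes pointwise convergence to uniform convergence, yielding $\gamma\wedge\bar\delta\in C^0([0,T];C^0([0,L]))$.
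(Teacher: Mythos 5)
Your proposal is correct and follows essentially the same route as the paper: uniform convexity of $\mc E_D$ and $\mc D$ (via the same Hessian estimate with constants $m_i/M_i$ and $\mu$) against the $\lambda$-convexity of $\delta\mapsto\varphi(\delta,\gamma\vee\delta)$, the sharp Poincar\'e inequality leading to condition \eqref{maincondition}, then (EB'), \eqref{powercontrol} and Lemma~\ref{inequalityintegral}, and for the second assertion the continuity in time of $t\mapsto\mc K[\delta(t),\gamma(t)]$ combined with monotone limits, equicontinuity and the strict monotonicity of $\varphi(y,\cdot)$. The only cosmetic deviation is that you test (GS') with the midpoint combination of the states at $s$ and $t$ instead of deriving the paper's improved stability inequality \eqref{improvedstability} by letting $\theta\to 0^+$, which merely changes the constant in front of the quadratic gain by a harmless factor $1/2$.
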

	\begin{rmk}\label{notrestrictive}
		We want to point out that the additional requirement of equicontinuity of the family $\{\gamma(t)\wedge\bar\delta\}_{t\in[0,T]}$, although can not be derived directly from the 
		Definition~\ref{Generalisedenersol} of generalised energetic evolutions, is automatically satisfied by the limit function $\gamma$ obtained in Proposition~\ref{limits}. Thus it is not restrictive.
	\end{rmk}
	\begin{proof}[Proof of Proposition~\ref{propregularity}]
		We first consider, for $i=1,2$, the Hessian matrix of the function $[0,1]\times\erre\ni(\alpha,v)\mapsto \frac 12 E_i(\alpha)v^2$, denoted by $H_i(\alpha,v)$, and its quadratic form, 
		namely the map:
		\begin{equation*}
			(x,y)\mapsto\langle(x,y),H_i(\alpha,v)(x,y)\rangle=\frac 12 E_i''(\alpha)v^2x^2+2 E_i'(\alpha)vxy+E_i(\alpha)y^2.
		\end{equation*}
		By \eqref{convexE} it must be $E_i''(\alpha)>0$ for every $\alpha\in [0,1]$ and so we can write:
		\begin{align*}
			\langle(x,y),H_i(\alpha,v)(x,y)\rangle&=\frac{2}{E_i''(\alpha)}\left[\left(\frac 12 E_i''(\alpha)vx+E_i'(\alpha)y\right)^2
			+\left(\frac 12 E_i''(\alpha)E_i(\alpha)-E_i'(\alpha)^2\right)y^2\right]\\
			&\ge 2\frac{m_i}{M_i}y^2.
		\end{align*}
		Thanks to this estimate on the Hessian matrix it is easy to infer that for every $t\in [0,T]$, for every $\theta\in[0,1]$ and for every $\bm{v}^a,\bm{v}^b\in [H^1_0(0,L)]^2$ 
		and $\bm{\alpha}^a,\bm{\alpha}^b\in [H^1_{[0,1]}(0,L)]^2$ it holds:
		\begin{equation}\label{thetaE}
			\begin{aligned}
				&\qquad \mc E_D[t,\theta\bm{v}^a+(1-\theta)\bm{v}^b,\theta\bm{\alpha}^a+(1-\theta)\bm{\alpha}^b]\\
				&\le \theta \mc E_D[t,\bm{v}^a,\bm{\alpha}^a]+(1-\theta)\mc E_D[t,\bm{v}^b,\bm{\alpha}^b]
				-\frac{m_1}{M_1}\wedge\frac{m_2}{M_2}\theta(1-\theta)\Vert(\bm{v}^a)'-(\bm{v}^b)'\Vert^2_{[L^2(0,L)]^2}.
			\end{aligned}	
		\end{equation}
		By means of \eqref{convexw} we also deduce that for every $t\in [0,T]$, for every $\theta\in[0,1]$ and for every $\bm{\alpha}^a,\bm{\alpha}^b\in [H^1_{[0,1]}(0,L)]^2$ we have:
		\begin{equation}\label{thetaD}
			\mc D[\theta\bm{\alpha}^a+(1-\theta)\bm{\alpha}^b]\le \theta \mc D[\bm{\alpha}^a]+(1-\theta)\mc D[\bm{\alpha}^b]
			-\frac{\mu\wedge 1}{2}\theta(1-\theta)\Vert\bm{\alpha}^a-\bm{\alpha}^b\Vert^2_{[H^1(0,L)]^2}.
		\end{equation}
		Finally, by \eqref{2c}, \eqref{convexpsi} and \eqref{convexphi} (which are implied by ($\varphi$5)--($\varphi$8)) we deduce that for every $z\in[0,+\infty)$ the function 
		$y\mapsto\varphi(y,z\vee y)$ is $\lambda$--convex in $[0,+\infty)$; thus for every $t\in [0,T]$, for every $\theta\in[0,1]$ and for every nonnegative 
		$\delta^a,\delta^b\in H^1_0(0,L)$ it holds:
		\begin{equation}\label{thetaK}
			\begin{aligned}
				&\qquad\mc K[\theta\delta^a+(1-\theta)\delta^b,\gamma(t)\vee(\theta\delta^a+(1-\theta)\delta^b)]\\
				&\le \theta\mc K[\delta^a,\gamma(t)\vee\delta^a]+(1-\theta)\mc K[\delta^b,\gamma(t)\vee\delta^b]+\frac{\lambda}{2}\theta(1-\theta)\Vert\delta^a-\delta^b\Vert^2_{L^2(0,L)}.
			\end{aligned}	
		\end{equation}
		We now fix $t\in[0,T]$, $\theta\in (0,1)$, $\widetilde{\bm{v}}\in[H^1_0(0,L)]^2$, $\widetilde{\bm{\alpha}}\in[H^1(0,L)]^2$ such that $\alpha_i(t)\le\widetilde{{\alpha}}_i(t)
		\le 1$ for $i=1,2$, and we consider as competitors in (GS') the functions $\theta \widetilde{\bm{v}}+(1-\theta)\bm{v}(t)$ and 
		$\theta \widetilde{\bm{\alpha}}+(1-\theta)\bm{\alpha}(t)$; by means of \eqref{thetaE}, \eqref{thetaD} and \eqref{thetaK}, together with ($\varphi$3) and \eqref{convexphi}, 
		we thus get:
		\begin{align}\label{esteta}
			&\qquad\mc E_D[t,\bm{v}(t),\bm{\alpha}(t)]+\mc D[\bm{\alpha}(t)]+\mc K[\delta(t),\gamma(t)]\nonumber\\
			&\le\mc E_D[t,\theta \widetilde{\bm{v}}+(1-\theta)\bm{v}(t),\theta \widetilde{\bm{\alpha}}+(1-\theta)\bm{\alpha}(t)]
			+\mc D[\theta \widetilde{\bm{\alpha}}+(1-\theta)\bm{\alpha}(t)]\nonumber\\
			&\quad+\mc K[|\theta(\widetilde{v}_1-\widetilde{v}_2)+(1-\theta)(v_1(t)-v_2(t))|,\gamma(t)\vee|\theta(\widetilde{v}_1-\widetilde{v}_2)+(1-\theta)(v_1(t)-v_2(t))|]\nonumber\\
			&\le \theta \mc E_D[t,\widetilde{\bm{v}},\widetilde{\bm{\alpha}}]+(1-\theta)\mc E_D[t,\bm{v}(t),\bm{\alpha}(t)]
			-\frac{m_1}{M_1}\wedge\frac{m_2}{M_2}\theta(1-\theta)\Vert(\widetilde{\bm{v}})'-(\bm{v}(t))'\Vert^2_{[L^2(0,L)]^2}\nonumber \\
			&\quad+\theta \mc D[\widetilde{\bm{\alpha}}]+(1-\theta)\mc D[\bm{\alpha}(t)]-\frac{\mu\wedge 1}{2}\theta(1-\theta)\Vert\widetilde{\bm{\alpha}}
			-\bm{\alpha}(t)\Vert^2_{[H^1(0,L)]^2}\nonumber \\
			&\quad +\theta\mc K[\widetilde\delta,\gamma(t)\vee\widetilde\delta]
			+(1-\theta)\mc K[\delta(t),\gamma(t)]+\frac{\lambda}{2}\theta(1-\theta)\Vert\widetilde\delta-\delta(t)\Vert^2_{L^2(0,L)}.
		\end{align}
		We now exploit the well known sharp Poincar\'{e} inequality:
		\begin{equation*}
			\int_{a}^{b}f(x)^2\d x\le\frac{(b-a)^2}{\pi^2}\int_{a}^{b} f'(x)^2\d x,\text{ for every }f\in H^1_0(a,b),
		\end{equation*}
		to deduce that
		\begin{equation}\label{deltav}
			\Vert\widetilde\delta-\delta(t)\Vert^2_{L^2(0,L)}\le 2\frac{L^2}{\pi^2}\Vert(\widetilde{\bm{v}})'-(\bm{v}(t))'\Vert^2_{[L^2(0,L)]^2}.
		\end{equation}
		By plugging \eqref{deltav} in \eqref{esteta}, dividing by $\theta$ and then letting $\theta\to 0^+$ we finally deduce:
		\begin{equation}\label{improvedstability}
			\begin{aligned}
				&\quad\left(\frac{m_1}{M_1}\wedge\frac{m_2}{M_2}-\lambda\frac{L^2}{\pi^2} \right)\Vert(\widetilde{\bm{v}})'-(\bm{v}(t))'\Vert^2_{[L^2(0,L)]^2}
				+\frac{\mu\wedge 1}{2}\Vert\widetilde{\bm{\alpha}}-\bm{\alpha}(t)\Vert^2_{[H^1(0,L)]^2}\\
				&\quad+\mc E_D[t,\bm{v}(t),\bm{\alpha}(t)]+\mc D[\bm{\alpha}(t)]+\mc K[\delta(t),\gamma(t)]\\
				&\le\mc E_D[t,\widetilde{\bm{v}},\widetilde{\bm{\alpha}}]+\mc D[\widetilde{\bm{\alpha}}]+\mc K[\widetilde\delta,\gamma(t)\vee\widetilde\delta].
			\end{aligned}
		\end{equation}
		For the sake of simplicity we denote by $c$ the minimum between $\frac{m_1}{M_1}\wedge\frac{m_2}{M_2}-\lambda\frac{L^2}{\pi^2}$ and $\frac{\mu\wedge 1}{2}$, 
		and we notice that $c$ is strictly positive by \eqref{maincondition}. We now fix two times $0\le s\le t\le T$. Exploiting \eqref{improvedstability} at time $s$ with 
		$\widetilde{\bm{v}}=\bm{v}(t)$ and $\widetilde{\bm{\alpha}}=\bm{\alpha}(t)$, and recalling (EB') and \eqref{power} we obtain:
		\begin{align*}
			&\quad c\left(\Vert\bm{\alpha}(t)-\bm{\alpha}(s)\Vert^2_{[H^1(0,L)]^2}+\Vert\bm{v}(t)'-\bm{v}(s)'\Vert^2_{[L^2(0,L)]^2}\right)\\
			&\le \mc E_D[s,\bm{v}(t),\bm{\alpha}(t)]+\mc D[\bm{\alpha}(t)]+\mc K[\delta(t),\gamma(s)\vee\delta(t)]-\mc E_D[s,\bm{v}(s),\bm{\alpha}(s)]
			-\mc D[\bm{\alpha}(s)]-\mc K[\delta(s),\gamma(s)]\\
			&\le \mc E_D[s,\bm{v}(t),\bm{\alpha}(t)]-\mc E_D[t,\bm{v}(t),\bm{\alpha}(t)]+\mc W[\bm{u},\bm{\alpha}](t)-\mc W[\bm{u},\bm{\alpha}](s)\\
			&\le \int_{s}^{t}|\partial_t\mc E_D[\tau,\bm{v}(\tau),\bm{\alpha}(\tau)]-\partial_t\mc E_D[\tau,\bm{v}(t),\bm{\alpha}(t)]|\d\tau.
		\end{align*}
		By using \eqref{powercontrol} we thus deduce:
		\begin{align*}
			&\quad\Vert\bm{\alpha}(t)-\bm{\alpha}(s)\Vert^2_{[H^1(0,L)]^2}+\Vert\bm{v}(t)'-\bm{v}(s)'\Vert^2_{[L^2(0,L)]^2}\\
			&\le \frac Cc \int_{s}^{t}|\dot{\bar{u}}(\tau)|\Big(\Vert\bm{\alpha}(\tau)-\bm{\alpha}(t)\Vert^2_{[H^1(0,L)]^2}
			+\Vert\bm{v}(\tau)'-\bm{v}(t)'\Vert^2_{[L^2(0,L)]^2}\Big)^\frac 12\d\tau.
		\end{align*}
		By means of \eqref{bounds1sol} we can apply Lemma~\ref{inequalityintegral} getting:
		\begin{equation*}
			\Big(\Vert\bm{\alpha}(t)-\bm{\alpha}(s)\Vert^2_{[H^1(0,L)]^2}+\Vert\bm{v}(t)'-\bm{v}(s)'\Vert^2_{[L^2(0,L)]^2}\Big)^\frac 12
			\le \frac Cc\int_{s}^{t}|\dot{\bar{u}}(\tau)|\d\tau,
		\end{equation*}
		and so we infer that $\bm{\alpha}$ belongs to $AC([0,T];[H^1(0,L)]^2)$ and $\bm{v}$ belongs to $AC([0,T];[H^1_0(0,L)]^2)$. By construction we also have
		\begin{align*}
			\Vert\bm{u}(t)-\bm{u}(s)\Vert_{[H^1(0,L)]^2}&\le \Vert\bm{v}(t)-\bm{v}(s)\Vert_{[H^1(0,L)]^2}+\Vert\bm{u}_D(t)-\bm{u}_D(s)\Vert_{[H^1(0,L)]^2}\\
			&\le  \Vert\bm{v}(t)-\bm{v}(s)\Vert_{[H^1(0,L)]^2}+C|\bar{u}(t)-\bar{u}(s)|,
		\end{align*}
		so also $\bm{u}$ belongs to $AC([0,T];[H^1(0,L)]^2)$ and as a simple byproduct we obtain that $\delta$ is $AC([0,T];H^1(0,L))$.\par
		Since $H^1(0,L)\subseteq C^0([0,L])$, in particular there exists a nonnegative function $\phi\in L^1(0,T)$ such that
		\begin{equation}\label{daprovare}
			\Vert\delta(t)-\delta(s)\Vert_{C^0([0,L])}\le \int_{s}^{t}\phi(\tau)\d\tau,\text{ for every }0\le s\le t\le T.
		\end{equation}
		We now show that the same inequality holds true for $\delta_h$ in place of $\delta$. We thus fix $0\le s\le t\le T$ and $x\in [0,L]$. 
		If $\delta_h(t,x)=\delta_h(s,x)$ there is nothing to prove, so let us assume $\delta_h(t,x)>\delta_h(s,x)$. By definition of $\delta_h$ and since 
		now we know that $\delta$ is continuous both in time and space we deduce that
		$$\delta_h(t,x)=\max\limits_{\tau\in[0,t]}\delta(\tau,x)=\delta(t_x,x),\quad\text{ for some }t_x\in[s,t].$$
		So we have
		\begin{equation*}
			\delta_h(t,x)-\delta_h(s,x)\le \delta(t_x,x)-\delta(s,x)\le\int_{s}^{t_x}\phi(\tau)\d\tau\le \int_{s}^{t}\phi(\tau)\d\tau.
		\end{equation*}
		We have thus proved the validity of \eqref{daprovare} with $\delta_h$ in place of $\delta$, and hence $\delta_h$ belongs to $AC([0,T];C^0([0,L]))$.\par
		We only need to prove that $\gamma\wedge\bar\delta\in C^0([0,T];C^0([0,L])$ under the additional assumptions that $\{\gamma(t)\wedge\bar\delta\}_{t\in[0,T]}$ 
		is an equicontinuous family and $\varphi(y,\cdot)$ is strictly increasing in $[y,\bar\delta)$ for any given $y\in [0,\bar\delta)$. For the sake of clarity we prove 
		it only in the case $\bar\delta=+\infty$; in the other situation the result can be obtained arguing in the same way and recalling equality \eqref{equalitiphidelta}. 
		To this aim we observe that, by equicontinuity, for every $t\in [0,T]$ the right and the left limits $\gamma^+(t)$ and $\gamma^-(t)$ are continuous in $[0,L]$. 
		By monotonicity and using classical Dini's theorem we hence obtain
		\begin{equation}\label{uniform}
			\gamma^\pm(t)=\lim\limits_{h\to 0^\pm}\gamma(t+h),\quad\text{ uniformly in  }[0,L].
		\end{equation}
		So we conclude if we prove that $\gamma^+(t)=\gamma^-(t)$.\par
		Arguing as in the proof of Proposition~\ref{propimportant}, since $\bm{u}$ and $\bm{\alpha}$ are in $AC([0,T];[H^1(0,L)]^2)$, we deduce by (EB') that the map 
		$t\mapsto \mc K[\delta(t),\gamma(t)]$ is continuous in $[0,T]$, and thus for every $t\in [0,T]$ we have:
		\begin{equation*}
			\lim\limits_{h\to 0^+}\mc K[\delta(t+h),\gamma(t+h)]=\lim\limits_{h\to 0^-}\mc K[\delta(t+h),\gamma(t+h)].
		\end{equation*}
		By using \eqref{uniform} we can pass to the limit inside the integral getting
		\begin{equation*}
			\int_{0}^{L}\varphi(\delta(t),\gamma^+(t))\d x=\int_{0}^{L}\varphi(\delta(t),\gamma^-(t))\d x.
		\end{equation*}
		Since $\varphi(y,\cdot)$ is strictly increasing we conclude.
	\end{proof}
	Thanks to the temporal regularity obtained in the previous proposition we are able to prove our main results. The first theorem ensures the equality between 
	$\gamma$ and $\delta_h$ (actually between $\gamma\wedge\bar\delta$ and $\delta_h\wedge\bar\delta$, which however are the meaningful ones, see Remark~\ref{Rmkimportant}) 
	assuming a priori equicontinuity on the family $\{\gamma(t)\}_{t\in[0,T]}$, which is however not restrictive due to Remark~\ref{notrestrictive}; 
	a similar argument to the one adopted here, but in an easier setting, can be found in \cite{Rivquas}, Proposition~2.7. 
	The second theorem states that the generalised energetic evolution obtained in Section~\ref{secexistence} as limit of discrete minimisers is actually an energetic evolution. 
	We thus reach our goal, avoiding the assumption ($\varphi$4), and considering the list of reasonable assumptions ($\varphi$5)--($\varphi$9) 
	(actually we replace ($\varphi$9) by the weaker \eqref{bilipass}) which for instance are satisfied by the example provided in Remark~\ref{rmkexamplephi}.
	\begin{thm}[\textbf{Equivalence between $\gamma$ and $\delta_h$}]\label{thmgammadeltah}
		Let the prescribed displacement $\bar{u}$ belong to the space $ AC([0,T])$. Assume $E_i$ satisfies \eqref{convexE}, $w_i$ satisfies \eqref{convexw}, 
		and $\varphi\in C^0(\mc T)$ satisfies ($\varphi$5)--($\varphi$8), plus the following uniform strict monotonicity with respect to $z$:
		\begin{equation}\label{bilipass}
			\begin{gathered}
				\text{ for every compact set }K\in \{z>y\ge 0\}\cap\overline{\mc T_{\bar\delta}}\text{ there exists a positive constant }C_K>0 \text{ such that}\\
				\varphi(y,z_2)-\varphi(y,z_1)\ge C_K(z_2-z_1) \\
				\text{ for every }(z_2,y),(z_1,y)\in K\text{ satisfying }z_2\ge z_1.
			\end{gathered}
		\end{equation}
		Assume also condition \eqref{maincondition} on the parameters. Then, given a generalised energetic evolution $(\bm{u},\bm{\alpha},\gamma)$ such that the family 
		$\{\gamma(t)\wedge\bar\delta\}_{t\in[0,T]}$ is equicontinuous, the function $\gamma\wedge\bar\delta$ coincides with $\delta_h\wedge\bar\delta$.
	\end{thm}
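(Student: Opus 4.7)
The plan is to argue by contradiction, combining the temporal regularity of Proposition~\ref{propregularity} with the vanishing-derivative identity of Proposition~\ref{propimportant}. Under the assumptions of Theorem~\ref{thmgammadeltah} all hypotheses of Proposition~\ref{propregularity} are met --- in particular \eqref{bilipass} implies strict monotonicity of $\varphi(y,\cdot)$ on $[y,\bar\delta)$ --- so $\bm u,\bm\alpha\in AC([0,T];[H^1(0,L)]^2)$, $\delta_h\in AC([0,T];C^0([0,L]))$ and $\bar\gamma:=\gamma\wedge\bar\delta\in C^0([0,T];C^0([0,L]))$. Thanks to Remark~\ref{Rmkimportant} the density $\varphi$ inside $\mc K$ may be read with $\bar\gamma$ in place of $\gamma$, so Proposition~\ref{propimportant} applies and gives
\begin{equation*}
 \lim_{h\to 0}\int_0^L\frac{\varphi(\delta(t,x),\bar\gamma(t+h,x))-\varphi(\delta(t,x),\bar\gamma(t,x))}{h}\d x=0\qquad\text{for a.e. }t\in[0,T].
\end{equation*}

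Suppose, by contradiction, that $\bar\gamma(\bar t,\bar x)>\delta_h(\bar t,\bar x)\wedge\bar\delta$ at some $(\bar t,\bar x)$; this forces $\delta_h(\bar t,\bar x)<\bar\delta$ (otherwise both sides equal $\bar\delta$), so $\delta_h\wedge\bar\delta=\delta_h$ and $\delta\wedge\bar\delta=\delta$ locally. Setting $t_0:=\sup\{s\in[0,\bar t]\mid\bar\gamma(s,\bar x)\le\delta_h(\bar t,\bar x)\}$, continuity and monotonicity of $\bar\gamma(\cdot,\bar x)$ yield $\bar\gamma(t_0,\bar x)=\delta_h(\bar t,\bar x)$ and $\bar\gamma(s,\bar x)>\delta_h(\bar t,\bar x)\ge\delta(\tau,\bar x)$ for every $s\in(t_0,\bar t]$ and $\tau\in[0,\bar t]$. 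Joint continuity in $(t,x)$ then produces $\eta,c>0$ and a neighbourhood $V\ni\bar x$ such that, with $J:=[t_0+\eta,\bar t]$,
\begin{equation*}
 \bar\gamma(s,x)\ge\delta(\tau,x)+c,\qquad \bar\gamma(\bar t,x)-\bar\gamma(t_0+\eta,x)\ge\Delta/2,
\end{equation*}
for $(\tau,s,x)\in[0,\bar t]\times J\times V$, where $\Delta:=\bar\gamma(\bar t,\bar x)-\bar\gamma(t_0,\bar x)>0$. In particular $(\delta(t,x),\bar\gamma(s,x))$ stays inside a compact set $K\subset\{(y,z)\in\overline{\mc T_{\bar\delta}}\mid z>y\}$ on which \eqref{bilipass} supplies a constant $C_K>0$.

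The key new estimate comes from pairing (GS') at time $s$ with competitor $(\bm u(t),\bm\alpha(t))$ (admissible by irreversibility) and substituting (EB') at $s$ and $t$; this yields the standard inequality
\begin{equation*}
 \mc K[\delta(t),\bar\gamma(t)]-\mc K[\delta(t),\bar\gamma(s)\vee\delta(t)]\le\mc W[\bm u,\bm\alpha](t)-\mc W[\bm u,\bm\alpha](s)\qquad\text{for }s\le t\text{ in }J.
\end{equation*}
On $V$ the maximum reduces to $\bar\gamma(s)$ by construction; dropping the (nonnegative) contribution from $[0,L]\setminus V$ and using \eqref{bilipass} on $K$ gives
\begin{equation*}
 C_K\int_V\bigl(\bar\gamma(t,x)-\bar\gamma(s,x)\bigr)\d x\le\mc W[\bm u,\bm\alpha](t)-\mc W[\bm u,\bm\alpha](s),
\end{equation*}
so that $t\mapsto\int_V\bar\gamma(t,x)\d x$ is absolutely continuous on $J$ (inherited from $\mc W[\bm u,\bm\alpha]$). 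Restricting the identity from Proposition~\ref{propimportant} to $V$ (legitimate because the integrand is nonnegative and the whole-$L$ integral tends to zero) and invoking \eqref{bilipass} from below in the other direction forces the derivative $\frac{\d}{\d t}\int_V\bar\gamma(t,x)\d x$ to vanish for a.e. $t\in J$. Hence $\int_V\bar\gamma(\cdot,x)\d x$ is constant on $J$, contradicting $\int_V[\bar\gamma(\bar t,x)-\bar\gamma(t_0+\eta,x)]\d x\ge(\Delta/2)|V|>0$.

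The main obstacle is precisely the upgrade from the pointwise identity of Proposition~\ref{propimportant} to genuine constancy of $\int_V\bar\gamma(\cdot,x)\d x$: Fatou's lemma alone only extracts $\liminf_{h\to 0^+}\frac{\bar\gamma(t+h,x)-\bar\gamma(t,x)}{h}=0$ almost everywhere, which for a continuous non-decreasing function is perfectly compatible with Cantor-staircase-type growth. The ingredient that rules this pathology out is the independently derived absolute continuity of $\int_V\bar\gamma\,\d x$, obtained from (GS'), (EB') and the uniform strict monotonicity \eqref{bilipass} on compact sets; coupled with a.e.-vanishing derivative, this finally yields true constancy and delivers the contradiction.
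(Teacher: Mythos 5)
Your proposal is correct and is essentially the paper's own argument: temporal regularity from Proposition~\ref{propregularity}, the vanishing-derivative identity \eqref{zeroder} of Proposition~\ref{propimportant}, and the uniform monotonicity \eqref{bilipass} on a compact subset of $\{z>y\}\cap\overline{\mc T_{\bar\delta}}$ are combined to show that a localized integral of $\gamma$ is absolutely continuous in time with a.e.\ vanishing derivative, hence constant, contradicting the assumed gap between $\gamma$ and $\delta_h$; the only structural difference is that you stop at the ``last crossing time'' $t_0$ and contradict the growth $\Delta/2$ on $[t_0+\eta,\bar t]$, whereas the paper localizes near $(\bar t,\bar x)$ and iterates the constancy backwards to $t=0$, contradicting $\gamma(0)=\delta^0$ --- a cosmetic variation. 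One small repair is needed in your ``key new estimate'': $(\bm{u}(t),\bm{\alpha}(t))$ is \emph{not} admissible in (GS') at time $s$, since irreversibility handles the damage constraint but not the Dirichlet datum $\bar u(s)$; you must correct the displacement by the affine lift $(\bar u(s)-\bar u(t))x/L$ (which leaves the slip $\delta(t)$ unchanged and only adds an error of order $\int_s^t|\dot{\bar u}(\tau)|\d\tau$), or alternatively argue as the paper does in \eqref{estimate2}, exploiting directly the absolute continuity of $t\mapsto\mc K[\delta(t),\gamma(t)]$ coming from (EB') together with the boundedness of $\partial_y\varphi$ in ($\varphi$8); with this adjustment your derivation of the absolute continuity of $t\mapsto\int_V\bar\gamma(t,x)\d x$, and hence the whole proof, goes through.
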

	\begin{proof}
		For the sake of clarity we prove the result only in the case $\bar\delta=+\infty$, being the other situation analogous by \eqref{equalitiphidelta}.\par
		We know that $\gamma\ge \delta_h$ and that $\gamma(0)=\delta_h(0)=\delta^0$ and $\gamma(t,0)=\gamma(t,L)=\delta_h(t,0)=\delta_h(t,L)=0$ for every $t\in [0,T]$. 
		Moreover by Proposition~\ref{propregularity} we know that both $\gamma$ and $\delta_h$ are continuous on $[0,T]\times[0,L]$.\par
		We thus assume by contradiction there exists $(\bar t,\bar x)\in (0,T]\times(0,L)$ for which $\gamma(\bar t,\bar x)>\delta_h(\bar t,\bar x)$; by continuity we thus 
		deduce there exists $\eta>0$ such that
		\begin{equation*}
			\gamma(t,x)>\delta_h(t,x)\ge\delta(t,x),\quad\text{ for every }(t,x)\in [\bar t-\eta,\bar t\,]\times [\bar x-\eta,\bar x+\eta].
		\end{equation*}
		By assumption \eqref{bilipass} we hence infer the existence of  constant $c_\eta>0$ for which
		\begin{equation}\label{bilipestimate}
			\begin{gathered}
				\varphi(\delta(s,x),\gamma(t,x)){-}\varphi(\delta(s,x),\gamma(s,x))\ge c_\eta(\gamma(t,x){-}\gamma(s,x)),\\
				\text{ for every }\bar t{-}\eta\le s\le t\le \bar t\text{ and }x\!\in\![\bar x{-}\eta,\bar x{+}\eta].
			\end{gathered}
		\end{equation}
		We now recall that by Proposition~\ref{propregularity} we know the map $t\mapsto\mc K[\delta(t),\gamma(t)]$ is absolutely continuous in $[0,T]$. 
		So for every $0\le s\le t\le T$ we can estimate:
		\begin{align}\label{estimate2}
			&\quad\,\,\int_{0}^{L}(\varphi(\delta(s),\gamma(t)){-}\varphi(\delta(s),\gamma(s)))\d x\nonumber\\
			&=\mc K[\delta(t),\gamma(t)]-\mc K[\delta(s),\gamma(s)]
			+\int_{0}^{L}(\varphi(\delta(s),\gamma(t)){-}\varphi(\delta(t),\gamma(t)))\d x\\
			&\le \int_{s}^{t}\frac{\d}{\d t}\mc K[\delta(\tau),\gamma(\tau)]\d\tau+C\Vert\delta(t)-\delta(s)\Vert_{C^0([0,L])}\le \int_{s}^{t}\phi(\tau)\d\tau,\nonumber
		\end{align}
		where $\phi\in L^1(0,T)$ is a suitable nonnegative function.\par
		Combining \eqref{bilipestimate} and \eqref{estimate2} we now obtain:
		\begin{equation*}
			c_\eta\int_{\bar x-\eta}^{\bar x+\eta}(\gamma(t)-\gamma(s))\d x\le \int_{s}^{t}\phi(\tau)\d\tau,\quad\text{ for every }\bar t{-}\eta\le s\le t\le \bar t,
		\end{equation*}
		hence $\gamma\in AC([\bar t-\eta,\bar t\,];L^1(\bar x-\eta,\bar x+\eta))$.\par
		By means of \eqref{zeroder} we now deduce that for a.e. $t\in [\bar t-\eta,\bar t\,]$ we have:
		\begin{equation*}
			0=\lim\limits_{h\to 0}\int_{0}^{L}\frac{\varphi(\delta(t),\gamma(t+h))-\varphi(\delta(t),\gamma(t))}{h}\d x
			\ge c_\eta\limsup\limits_{h\to 0}\int_{\bar x-\eta}^{\bar x+\eta}\frac{\gamma(t+h)-\gamma(t)}{h}\d x\ge 0,
		\end{equation*}
		namely for almost every $t\in[\bar t-\eta,\bar t\,]$ the function $\gamma$ is strongly differentiable in $L^1(\bar x-\eta,\bar x+\eta)$ and $\dot{\gamma}(t)=0$. 
		By Proposition~\ref{sobolevequivalence} we now obtain
		\begin{equation*}
			\gamma(t)=\gamma(\bar t-\eta)+\int_{\bar t-\eta}^{t}\dot{\gamma}(\tau)\d\tau
			=\gamma(\bar t-\eta),\text{ for every }t\in [\bar t-\eta,\bar t\,],\text{ as an equality in }L^1(\bar x-\eta,\bar x+\eta).
		\end{equation*}
		In particular, since $\gamma$ is continuous, we deduce that $\gamma(\bar t,\bar x)=\gamma(\bar t-\eta, \bar x)$.\par
		Since $\delta_h$ is non-decreasing we can iterate all the previous argument, finally getting $\gamma(\bar t,\bar x)=\gamma(0, \bar x)$. But this is a contradiction, indeed it implies:
		\begin{equation*}
			\delta^0(\bar x)=\gamma(0,\bar x)=\gamma(\bar t,\bar x)>\delta_h(\bar t,\bar x)\ge \delta_h(0,\bar x)=\delta^0(\bar x),
		\end{equation*}
		and so we conclude.
	\end{proof}
	
	\begin{thm}[\textbf{Existence of Energetic Evolutions}]\label{finalthm}
		Let the prescribed displacement $\bar{u}$ belong to the space $ AC([0,T])$ and the initial data $\bm{u}^0$, $\bm{\alpha}^0$ fulfil \eqref{compatibility} together with 
		the stability condition \eqref{GS0}. Assume $E_i$ satisfies \eqref{convexE}, $w_i$ satisfies \eqref{convexw}, and $\varphi\in C^0(\mc T)$ 
		satisfies ($\varphi$2), ($\varphi$5)--($\varphi$8) and \eqref{bilipass}. Assume also condition \eqref{maincondition} on the parameters. 
		Then the pair $(\bm{u}, \bm{\alpha})$ composed by the functions obtained in Proposition~\ref{limits} is an energetic evolution, since it 
		holds $\gamma\wedge\bar\delta=\delta_h\wedge\bar\delta$, with $\bar\delta$ introduced in \eqref{bardelta}. \par
		Moreover $\bm{u}$ and $\bm{\alpha}$ belong to $ AC([0,T];[H^1(0,L)]^2)$, and so in particular the history slip $\delta_h$ is in $AC([0,T]; C^0([0,L]))$.
	\end{thm}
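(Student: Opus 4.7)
The strategy is to assemble the results of the preceding sections: Theorem~\ref{exgenensol} delivers a generalised energetic evolution, Proposition~\ref{propregularity} upgrades it to absolutely continuous in time, Theorem~\ref{thmgammadeltah} identifies $\gamma\wedge\bar\delta$ with $\delta_h\wedge\bar\delta$, and finally Remark~\ref{Rmkimportant} (equation \eqref{equalitiphidelta}) ensures that this identification is enough to promote the generalised evolution to a genuine energetic evolution.

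First I would verify that the hypotheses of Theorem~\ref{exgenensol} are in force. The assumption $\varphi\in C^0(\mc T)$ yields ($\varphi$1); condition ($\varphi$2) is stated directly in the hypotheses; condition ($\varphi$3) holds because continuity in $z$ is built into $\varphi\in C^0(\mc T)$ and the required non-decreasing character follows from \eqref{bilipass}. Thus the triple $(\bm{u},\bm{\alpha},\gamma)$ produced by Proposition~\ref{limits} is a generalised energetic evolution. Moreover, by Proposition~\ref{limits}(5) together with Remark~\ref{notrestrictive}, the family $\{\gamma(t)\}_{t\in[0,T]}$ — and a fortiori $\{\gamma(t)\wedge\bar\delta\}_{t\in[0,T]}$ — is equicontinuous.

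Next I would invoke Proposition~\ref{propregularity}: its hypotheses \eqref{convexE}, \eqref{convexw}, ($\varphi$3), ($\varphi$5)--($\varphi$8) and the parameter condition \eqref{maincondition} are precisely those we assume. The strict monotonicity of $\varphi(y,\cdot)$ in $[y,\bar\delta)$ required for the last statement of that proposition is guaranteed by \eqref{bilipass}. We therefore gain $\bm{u},\bm{\alpha}\in AC([0,T];[H^1(0,L)]^2)$, $\delta_h\in AC([0,T];C^0([0,L]))$, and $\gamma\wedge\bar\delta\in C^0([0,T];C^0([0,L]))$. With these same hypotheses in place and the equicontinuity of $\{\gamma(t)\wedge\bar\delta\}$, Theorem~\ref{thmgammadeltah} applies and yields the key identity $\gamma\wedge\bar\delta=\delta_h\wedge\bar\delta$.

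The closing step is to read off that $(\bm{u},\bm{\alpha})$ satisfies (GS) and (EB). Because ($\varphi$5)--($\varphi$7) are in force, equality \eqref{equalitiphidelta} of Remark~\ref{Rmkimportant} lets us replace $\varphi$ by $\varphi_{\bar\delta}$ inside $\mc K$; consequently every occurrence of $\gamma(t)$ and $\delta_h(t)$ in (GS') and (EB') can be replaced by $\gamma(t)\wedge\bar\delta$ and $\delta_h(t)\wedge\bar\delta$, which by the previous step coincide. Thus (GS') transcribes verbatim into (GS) (both with competitor $\gamma(t)\vee\widetilde\delta=\delta_h(t)\vee\widetilde\delta$ in the cohesive term, again using \eqref{equalitiphidelta}), and analogously (EB') becomes (EB). There is no real obstacle in this last theorem beyond bookkeeping: the substantive work is already packaged in Proposition~\ref{propregularity} and Theorem~\ref{thmgammadeltah}; the only point that deserves care is the bridge via \eqref{equalitiphidelta}, since without it knowing $\gamma\wedge\bar\delta=\delta_h\wedge\bar\delta$ would not suffice to equate the cohesive functionals.
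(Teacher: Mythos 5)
Your proposal is correct and follows essentially the same route as the paper: the theorem is obtained by combining Theorem~\ref{exgenensol}, Proposition~\ref{propregularity} and Theorem~\ref{thmgammadeltah}, recalling \eqref{equalitiphidelta} and noting that the equicontinuity of $\{\gamma(t)\}_{t\in[0,T]}$ comes for free from Proposition~\ref{limits}. Your extra care in checking ($\varphi$1), ($\varphi$3) (via continuity together with \eqref{bilipass} and the constancy of $\varphi$ beyond $\bar\delta$) and in spelling out the passage from (GS')/(EB') to (GS)/(EB) through $\varphi_{\bar\delta}$ is just a more explicit rendering of the same argument.
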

	\begin{proof}
		The result is a simple byproduct of Theorem~\ref{exgenensol} together with Proposition~\ref{propregularity} and Theorem~\ref{thmgammadeltah} 
		(we also recall \eqref{equalitiphidelta}). We indeed notice that the equicontinuity assumption on the family $\{\gamma(t)\wedge\bar\delta\}_{t\in[0,T]}$ 
		(actually on the whole $\{\gamma(t)\}_{t\in[0,T]}$) is automatically satisfied by the limit function $\gamma$ obtained in Proposition~\ref{limits}.
	\end{proof}
	
	\bigskip

	\subsection*{Conclusions}
	The obtained results offer new insights for further investigations. The 2D numerical investigations presented in \cite{AleFredd2d}, where the complex failure modes of hybrid laminates are consistently reproduced, suggest to extend the theoretical investigation to higher dimensional settings whereby the introduction of the anisotropic behavior of materials allows the analysis of problems of interest for the conservation of cultural heritage \cite{Bucklow,Negri} and other micro-cracking phenomena such \cite{Qin}. A second line of exploration could also be the analysis of the problem in case of complete damage, meant as complete loss of material stiffness.\par 
	Moreover, it would be interesting to extend the proposed approach to classical problems of cohesive fracture mechanics. In this case, dissipation combined with irreversible effects introduces difficulties, at least when dealing with global minimisers of the energy, in considering loading-unloading cohesive laws that reflect the real behaviour of materials rather than hypotheses dictated by mere mathematical assumptions. The main difference provided by cohesive fracture models with respect to the considered problem of cohesive interface relies in the reduced dimension of the fracture, which is a $(d-1)$--dimensional object in a $d$--dimensional material. This feature involves the use of weaker topologies, which can not be directly treated following our argument, and thus requires further adaptations in order to transfer our results.

	\appendix
	\section{Absolutely Continuous and BV--Vector Valued Functions }
	
	In this Appendix we briefly present the main definitions and properties of vector valued absolutely continuous functions and functions of bounded variation we used 
	throughout the paper. A deeper and more detailed analysis can be found in the Appendix of \cite{Brez}, to which we refer for all the proofs and examples. Here $(X,\Vert\cdot\Vert)$ will denote a Banach space, 
	and by $X^*$ we mean its topological dual. The duality product between $w\in X^*$ and $x\in X$ is finally denoted by $\langle w,x\rangle$.
	\begin{defi}
		A function $f\colon [0,T]\to X$ is said to be:
		\begin{itemize}
			\item a function of bounded variation ($BV([0,T];X)$) if $$V_X(f;0,T):=\sup\limits_{\substack{\text{finite partitions}\\
					\text{of }[0,T]}}\sum\Vert f(t_k)-f(t_{k-1}) \Vert<+\infty;$$
			\item absolutely continuous ($AC([0,T];X)$) if there exists a nonnegative function $\phi\in L^1(0,T)$ such that
			$$\Vert f(t)-f(s)\Vert\le \int_{s}^{t}\phi(\tau)\d\tau,\quad\text{ for every }0\le s\le t\le T;$$
			\item in the space $\widetilde{W}^{1,p}(0,T;X)$, $p\in[1,+\infty]$, if there exists a nonnegative function $\phi\in L^p(0,T)$ such that
			$$\Vert f(t)-f(s)\Vert\le \int_{s}^{t}\phi(\tau)\d\tau,\quad\text{ for every }0\le s\le t\le T;$$
			\item in the Sobolev space $W^{1,p}(0,T;X)$, $p\in[1,+\infty]$, if there exists a function $g\in L^p(0,T;X)$ such that
			$$f(t)=f(0)+\int_{0}^{t}g(\tau)\d\tau,\quad\text{ for every }t\in [0,T].$$
		\end{itemize}
	\end{defi}
	As in the classical case $X=\erre$ any function of bounded variation belongs to $L^\infty(0,T;X)$, it admits right and left (strong) limits at every $t\in[0,T]$ and the 
	set of its discontinuity points is at most countable. To gain the well known property of almost everywhere differentiability also in the vector valued framework it is instead 
	crucial to require $X$ to be reflexive (see the examples in \cite{Brez}).
	\begin{prop}\label{reflexive}
		If $X$ is reflexive, then any function $f$ belonging to $BV([0,T];X)$ is weakly differentiable almost everywhere in $[0,T]$. 
		Moreover $\Vert\dot{f}(t)\Vert\le \frac{\d}{\d t}V_X(f;0,t)$ for a.e. $t\in[0,T]$ and in particular $\dot{f}\in L^1(0,T;X)$.
	\end{prop}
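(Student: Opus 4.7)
The plan is to combine three ingredients: the scalar monotone differentiation theorem applied to $V(t):=V_X(f;0,t)$, weak compactness from reflexivity, and a separability reduction that turns weak differentiability into countably many scalar differentiability statements.

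First I would show that $f([0,T])$ lies in a closed separable subspace of $X$. Let $D\subset[0,T]$ denote the (at most countable) set of discontinuities of $f$ (which exists by the properties recalled just above the statement); at every $t\notin D$ the existence of strong left and right limits forces $f(t)=\lim_{q\to t,\,q\in\qu}f(q)$. Hence $f([0,T])$ is contained in the closed linear span $Y$ of the countable set $\{f(q):q\in\qu\cap[0,T]\}\cup f(D)$. Since $X$ is reflexive, so is $Y$; being separable and reflexive, $Y$ has separable dual $Y^*$, and I fix a countable family $\{y^*_n\}_{n\in\enne}$ dense in the closed unit ball of $Y^*$.

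Next, $V$ is nondecreasing and bounded on $[0,T]$, hence differentiable almost everywhere with $V'\in L^1(0,T)$ and $\int_0^T V'(\tau)\,\d\tau\le V(T)$. For each $n$ the scalar function $g_n(t):=\langle y^*_n,f(t)\rangle$ satisfies $|g_n(t)-g_n(s)|\le\Vert f(t)-f(s)\Vert\le V(t)-V(s)$ for $0\le s\le t\le T$, so it is real-valued BV and thus differentiable off a null set $N_n$; let $N$ be the union of all the $N_n$ with the null set where $V'$ fails to exist. For $t\in[0,T]\setminus N$ the bound $\Vert(f(t+h)-f(t))/h\Vert\le |V(t+h)-V(t)|/|h|\to V'(t)$ shows the difference quotients are uniformly bounded as $h\to 0$; by reflexivity of $Y$ every sequence $h_k\to 0$ admits a subsequence along which $(f(t+h_k)-f(t))/h_k\rightharpoonup\ell$ weakly in $X$ for some $\ell\in Y$. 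Testing against each $y^*_n$ yields $\langle y^*_n,\ell\rangle=g_n'(t)$, a value independent of the subsequence; density of $\{y^*_n\}$ in the unit ball of $Y^*$ combined with Hahn--Banach then pins $\ell$ down uniquely, so the whole family of difference quotients converges weakly to this $\ell=:\dot f(t)$. Weak lower semicontinuity of the norm gives $\Vert\dot f(t)\Vert\le V'(t)$ a.e., integration yields $\dot f\in L^1(0,T;X)$, and strong measurability follows from Pettis' theorem (separable range in $Y$, each $\langle x^*,\dot f(\cdot)\rangle$ being the a.e.\ derivative of the scalar BV function $\langle x^*,f(\cdot)\rangle$, hence measurable).

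The main obstacle is precisely the uniqueness of the weak subsequential limit, since \emph{a priori} different sequences $h_k\to 0$ could produce different cluster values. Separability of $Y^*$ is the decisive ingredient: it collapses the uncountably many ``directional'' a.e.\ conditions into a single null exceptional set, after which Hahn--Banach identifies $\ell$ uniquely. Every other step -- monotone differentiation of $V$, weak compactness, lower semicontinuity -- is routine.
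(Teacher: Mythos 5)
Your proof is correct. The paper does not give its own argument here — it simply cites the Appendix of Brezis' monograph \emph{Op\'erateurs Maximaux Monotones} for all proofs in this section — and the argument you reconstruct (reduction to a closed separable reflexive subspace $Y$ using one-sided limits of a BV function, scalar monotone differentiation of $V$, bounded difference quotients plus weak compactness in $Y$, identification of the weak cluster value via a countable dense family in the unit ball of $Y^*$, and Pettis' theorem for strong measurability) is exactly the classical one found there. The delicate steps are handled correctly: the exceptional null set is produced from the countably many scalar BV functions $g_n$ at once, the Hahn--Banach/separability step does pin down the unique weak limit, weak lower semicontinuity of the norm gives $\Vert\dot f(t)\Vert\le V'(t)$, and Pettis applies because $\dot f$ takes values in the separable $Y$ and each scalar composition is an a.e.\ derivative of a scalar BV function, hence measurable.
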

	We now focus our attention on absolutely continuous and Sobolev functions. By the very definition it is easy to see that any absolutely continuous function is also of 
	bounded variation; furthermore the spaces $AC([0,T];X)$ and $\widetilde{W}^{1,1}(0,T;X)$ coincide, while $\widetilde{W}^{1,\infty}(0,T;X)$ is the space of Lipschitz 
	functions from $[0,T]$ to $X$. Moreover for every $p\in[1,+\infty]$ the inclusion $W^{1,p}(0,T;X)\subseteq \widetilde W^{1,p}(0,T;X)$ always holds, but in general is strict. \par
	The next proposition states that the Sobolev space $W^{1,p}(0,T;X)$ is actually characterised by the strong differentiability of its elements.
	\begin{prop}\label{sobolevequivalence}
		Let $p\in[1,+\infty]$ and let $f$ be a function from $[0,T]$ to $X$. Then the following are equivalent:
		\begin{itemize}
			\item[(i)] $f\in  W^{1,p}(0,T;X)$;
			\item[(ii)] $f\in  \widetilde W^{1,p}(0,T;X)$ and it is strongly differentiable for a.e. $t\in[0,T]$;
			\item[(iii)] for every $w\in X^*$ the map $t\mapsto \langle w, f(t)\rangle$ is absolutely continuous in $[0,T]$, $f$ is weakly differentiable for a.e. 
			$t\in[0,T]$ and $\dot{f}\in L^p(0,T;X)$.
		\end{itemize}
		If one of the above condition holds, then one has
		\begin{equation}\label{sob}
			f(t)=f(0)+\int_{0}^{t}\dot{f}(\tau)\d\tau,\quad\text{ for every }t\in [0,T].
		\end{equation}
	\end{prop}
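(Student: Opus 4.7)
The plan is to prove the cyclic chain of implications \((i)\Rightarrow(ii)\Rightarrow(iii)\Rightarrow(i)\), and then observe that along the way we will also have verified the integral representation \eqref{sob}. The driving principle is that condition (i) encodes a purely "integral" property, (ii) a "strong pointwise" differentiability property, and (iii) a "weak/scalar" one; so one direction (\((i)\Rightarrow(ii)\)) is the vector-valued Lebesgue differentiation theorem, another direction (\((ii)\Rightarrow(iii)\)) is just duality bookkeeping, and the substantive step is (\((iii)\Rightarrow(i)\)), which will be reduced to the scalar fundamental theorem of calculus by testing with functionals $w\in X^*$ and then applying Hahn–Banach.

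For \((i)\Rightarrow(ii)\): if $f(t)=f(0)+\int_{0}^{t}g(\tau)\d\tau$ with $g\in L^p(0,T;X)$, then directly $\Vert f(t)-f(s)\Vert\le\int_s^t\Vert g(\tau)\Vert\d\tau$, so $f\in\widetilde W^{1,p}(0,T;X)$ with control function $\phi=\Vert g\Vert$. Strong a.e. differentiability, with $\dot f(t)=g(t)$, follows from the Bochner-valued Lebesgue differentiation theorem: at every Lebesgue point $t$ of $g$ (which form a cofull set because $g\in L^1(0,T;X)$) one has $h^{-1}\int_t^{t+h}\Vert g(\tau)-g(t)\Vert\d\tau\to 0$, hence the difference quotients of $f$ converge strongly to $g(t)$. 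The step \((ii)\Rightarrow(iii)\) is immediate: the $\widetilde W^{1,p}$ control yields $|\langle w,f(t)\rangle-\langle w,f(s)\rangle|\le\Vert w\Vert_{X^*}\int_s^t\phi(\tau)\d\tau$, which is absolute continuity of $t\mapsto\langle w,f(t)\rangle$; strong differentiability trivially implies weak differentiability with the same derivative; and $\Vert\dot f(t)\Vert\le\phi(t)$ puts $\dot f$ in $L^p(0,T;X)$.

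The core step is \((iii)\Rightarrow(i)\). Fix $w\in X^*$. Since $t\mapsto\langle w,f(t)\rangle$ is absolutely continuous on $[0,T]$, the scalar fundamental theorem of calculus gives
\begin{equation*}
\langle w,f(t)\rangle-\langle w,f(0)\rangle=\int_{0}^{t}\tfrac{\d}{\d\tau}\langle w,f(\tau)\rangle\d\tau.
\end{equation*}
By the definition of weak differentiability of $f$ at a.e. $\tau$ (with an exceptional null set $N$ independent of $w$), one has $\langle w,h^{-1}(f(\tau+h)-f(\tau))\rangle\to\langle w,\dot f(\tau)\rangle$; and wherever the scalar derivative exists it must coincide with this limit. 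Hence $\tfrac{\d}{\d\tau}\langle w,f(\tau)\rangle=\langle w,\dot f(\tau)\rangle$ for a.e. $\tau\in[0,T]$. Now $\dot f\in L^p(0,T;X)\subseteq L^1(0,T;X)$, so its Bochner integral $\int_0^t\dot f(\tau)\d\tau\in X$ is well defined, and the continuity of $w\in X^*$ commutes with Bochner integration, yielding
\begin{equation*}
\langle w,f(t)-f(0)\rangle=\int_0^t\langle w,\dot f(\tau)\rangle\d\tau=\Big\langle w,\int_{0}^{t}\dot f(\tau)\d\tau\Big\rangle.
\end{equation*}
Since this holds for every $w\in X^*$, Hahn–Banach forces the vector identity $f(t)=f(0)+\int_{0}^{t}\dot f(\tau)\d\tau$, which is exactly (i) together with the representation formula \eqref{sob}.

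The main obstacle I anticipate is the delicate bookkeeping around the null sets in \((iii)\Rightarrow(i)\): the hypothesis gives, for each $w$, scalar differentiability outside a $w$-dependent null set $N_w$, whereas weak differentiability of $f$ gives a single null set $N$ on whose complement the weak derivative exists in $X$. One must carefully argue that on $[0,T]\setminus N$ the scalar derivative coincides with $\langle w,\dot f(\tau)\rangle$ for every $w$ simultaneously, so that the scalar integrand in the Newton–Leibniz identity is, as a function of $\tau$, precisely $\langle w,\dot f(\tau)\rangle$ almost everywhere. Once this is cleanly established, the rest of the argument is a mechanical application of the commutativity of continuous linear forms with the Bochner integral and of Hahn–Banach separation, and neither requires reflexivity nor separability of $X$ (the measurability of $\dot f$ being already built into the hypothesis $\dot f\in L^p(0,T;X)$).
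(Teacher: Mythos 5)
The paper does not actually prove this proposition: the Appendix opens by referring the reader to the appendix of \cite{Brez} ``for all the proofs and examples,'' so there is no internal argument to compare against. That said, your cyclic proof $(i)\Rightarrow(ii)\Rightarrow(iii)\Rightarrow(i)$ is correct and is essentially the standard argument one finds in that reference: $(i)\Rightarrow(ii)$ is the Bochner--Lebesgue differentiation theorem, $(ii)\Rightarrow(iii)$ is routine duality plus the bound $\Vert\dot f\Vert\le\phi$ a.e.\ (together with the observation that $\dot f$ is strongly measurable as an a.e.\ pointwise limit of the measurable difference quotients, so the bound does place it in $L^p(0,T;X)$), and $(iii)\Rightarrow(i)$ is the scalar fundamental theorem of calculus combined with the fact that $X^*$ separates points. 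One small remark: the ``obstacle'' you flag at the end about $w$-dependent exceptional null sets is in fact a non-issue. For each fixed $w$ you only need the identity $\frac{\mathrm d}{\mathrm d\tau}\langle w,f(\tau)\rangle=\langle w,\dot f(\tau)\rangle$ to hold on the co-null set $([0,T]\setminus N)\cap([0,T]\setminus N_w)$ in order to justify the Newton--Leibniz formula for that $w$; no single universal null set is required before you invoke Hahn--Banach, since the vector identity $f(t)-f(0)=\int_0^t\dot f$ involves no almost-everywhere quantifier. Also note that your claim that none of this needs reflexivity is correct and worth retaining, since the paper reserves reflexivity precisely for the adjacent results where weak differentiability of $BV$ functions is deduced rather than assumed.
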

	In the reflexive case, as in Proposition~\ref{reflexive}, we gain differentiability of absolutely continuous functions and so we deduce the equivalence between the 
	two spaces $\widetilde W^{1,p}(0,T;X)$ and $W^{1,p}(0,T;X)$.
	\begin{prop}
		If $X$ is reflexive, then for every $p\in[1,+\infty]$ the Sobolev space $W^{1,p}(0,T;X)$ coincides with $\widetilde W^{1,p}(0,T;X)$.
	\end{prop}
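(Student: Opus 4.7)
The plan is to apply the characterisation (iii) in Proposition~\ref{sobolevequivalence}, since the inclusion $W^{1,p}(0,T;X)\subseteq\widetilde W^{1,p}(0,T;X)$ has already been observed to hold without any hypothesis on $X$; thus only the reverse inclusion requires argument. Fix any $f\in\widetilde W^{1,p}(0,T;X)$, witnessed by some nonnegative control $\phi\in L^p(0,T)$, and I would check the three conditions of (iii) one by one.

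For the first condition, for any $w\in X^*$ the estimate $|\langle w,f(t)\rangle-\langle w,f(s)\rangle|\le \Vert w\Vert_{X^*}\int_s^t\phi(\tau)\d\tau$ immediately yields absolute continuity of the scalar map $t\mapsto\langle w,f(t)\rangle$ on $[0,T]$. For the second, the key observation is that $\widetilde W^{1,p}(0,T;X)\subseteq AC([0,T];X)\subseteq BV([0,T];X)$ (the first inclusion is trivial by H\"older or by monotonicity of the integral), so that Proposition~\ref{reflexive}, invoked precisely because $X$ is reflexive, provides weak differentiability almost everywhere together with the pointwise estimate $\Vert\dot f(t)\Vert\le \frac{\d}{\d t}V_X(f;0,t)$ for a.e.\ $t\in[0,T]$.

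It remains to verify that $\dot f\in L^p(0,T;X)$, which is the third condition. From the very definition of $\widetilde W^{1,p}$ one has $V_X(f;0,t)\le\int_0^t\phi(\tau)\d\tau$ for every $t\in[0,T]$, and differentiating this monotone inequality at Lebesgue points of $\phi$ yields $\frac{\d}{\d t}V_X(f;0,t)\le \phi(t)$ almost everywhere; combining with the bound from Proposition~\ref{reflexive} gives $\Vert\dot f(t)\Vert\le\phi(t)$ for a.e.\ $t\in[0,T]$, hence $\dot f\in L^p(0,T;X)$. Proposition~\ref{sobolevequivalence}(iii) then delivers $f\in W^{1,p}(0,T;X)$ together with the representation formula \eqref{sob}, completing the reverse inclusion.

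Reflexivity of $X$ enters the argument at exactly one point, namely in invoking Proposition~\ref{reflexive} to promote the $BV$ (in fact $AC$) regularity of $f$ to weak differentiability almost everywhere; this is the step that fails in general Banach spaces and is therefore the only substantive obstacle. All other manipulations—the continuity of duality pairings, monotonicity of the total variation, and the pointwise domination of $\dot f$ by $\phi$—are routine and should present no difficulty.
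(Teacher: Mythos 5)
The paper itself does not give a proof of this proposition; it is stated in the Appendix without argument, with a pointer to Brezis's monograph. So there is no paper proof to compare against, and my remarks concern only the internal correctness of your proposal. Your route is the natural one given the surrounding propositions: since $W^{1,p}(0,T;X)\subseteq\widetilde W^{1,p}(0,T;X)$ holds unconditionally, you reduce the converse to checking Proposition~\ref{sobolevequivalence}(iii), and the only nontrivial ingredient there is a.e.\ weak differentiability, which is exactly what reflexivity purchases via Proposition~\ref{reflexive} once one notes $\widetilde W^{1,p}(0,T;X)\subseteq AC([0,T];X)\subseteq BV([0,T];X)$ (the first inclusion using $L^p(0,T)\subseteq L^1(0,T)$ on the bounded interval). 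This is correct and well organised.

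There is, however, one small but real imprecision in the last step. You claim the estimate $V_X(f;0,t)\le\int_0^t\phi(\tau)\,\d\tau$ and then "differentiate this monotone inequality at Lebesgue points" to obtain $\frac{\d}{\d t}V_X(f;0,t)\le\phi(t)$ a.e. As stated, that inference fails: two non-decreasing functions $g\le h$ vanishing at $0$ need not satisfy $g'\le h'$ a.e.\ (take $g$ flat on $[0,1/2]$ then rising with slope $2$, and $h(t)=t$). What actually makes the step work is the \emph{two-endpoint} estimate that the definition of $\widetilde W^{1,p}$ delivers for free: summing $\Vert f(t_k)-f(t_{k-1})\Vert\le\int_{t_{k-1}}^{t_k}\phi$ over a partition of any subinterval gives $V_X(f;s,t)\le\int_s^t\phi(\tau)\,\d\tau$ for all $0\le s\le t\le T$. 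With this, at a Lebesgue point $t_0$ of $\phi$ one bounds the difference quotient $h^{-1}\bigl(V_X(f;0,t_0+h)-V_X(f;0,t_0)\bigr)=h^{-1}V_X(f;t_0,t_0+h)\le h^{-1}\int_{t_0}^{t_0+h}\phi$, which converges to $\phi(t_0)$, and your conclusion $\Vert\dot f(t)\Vert\le\phi(t)$ a.e.\ follows. After that quick repair, everything else in your argument is sound and the inclusion $\widetilde W^{1,p}(0,T;X)\subseteq W^{1,p}(0,T;X)$ is established.
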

	
	\bigskip
	
	\noindent\textbf{Acknowledgements.}
	E. Bonetti, C. Cavaterra and F. Riva are members of the Gruppo Nazionale per l'Analisi Matematica, la Probabilità e le loro Applicazioni (GNAMPA) of the Istituto Nazionale di Alta Matematica
	(INdAM).\par
	F. Riva acknowledges the support of SISSA (via Bonomea 265, Trieste, Italy), where he was affiliated when this research was carried out.
	
	\bigskip

	{\small
		
		\vspace{15pt} (Elena Bonetti) Universit\`{a} degli Studi di Milano, Dipartimento di Matematica ``Federigo Enriques'',\par\textsc{Via Cesare Saldini, 50, 20133, Milano, Italy}
		\par
		\textit{e-mail address}: \textsf{elena.bonetti@unimi.it}\par
		\textit{Orcid}: \textsf{
			https://orcid.org/0000-0002-8035-3257}
		\par
		
		\vspace{15pt} (Cecilia Cavaterra) Universit\`{a} degli Studi di Milano, Dipartimento di Matematica ``Federigo Enriques'',\par\textsc{Via Cesare Saldini, 50, 20133, Milano, Italy}
		\par
		Istituto di Matematica Applicata e Tecnologie Informatiche “Enrico Magenes”, CNR,\par\textsc{Via Ferrata, 1, 27100, Pavia, Italy}
		\par
		\textit{e-mail address}: \textsf{cecilia.cavaterra@unimi.it}
		\par
		\textit{Orcid}: \textsf{http://orcid.org/0000-0002-2754-7714}
		\par
		
		\vspace{15pt} (Francesco Freddi) Universit\`{a} degli Studi di Parma, Dipartimento di Ingegneria e Architettura, \par \textsc{Parco Area delle Scienze, 181/A, 43124, Parma, Italy}
		\par
		\textit{e-mail address}: \textsf{francesco.freddi@unipr.it}
		\par
		\textit{Orcid}: \textsf{https://orcid.org/0000-0003-0601-6022}
		\par
		
		\vspace{15pt} (Filippo Riva) Universit\`{a} degli Studi di Pavia, Dipartimento di Matematica ``Felice Casorati'', \par
		\textsc{Via Ferrata, 5, 27100, Pavia, Italy}
		\par
		\textit{e-mail address}: \textsf{filippo.riva@unipv.it}
		\par
		\textit{Orcid}: \textsf{https://orcid.org/0000-0002-7855-1262}
		\par
		
	}
	
\end{document}